\definecolor{darkblue}{rgb}{0.0,0.0,0.4}
\newtheorem{theorem}{Theorem}[section]
\newtheorem{proposition}[theorem]{Proposition}
\newtheorem{lemma}[theorem]{Lemma}
\newtheorem{corollary}[theorem]{Corollary}
\theoremstyle{remark}
\newcommand{\R}{{\mathbb{R}}}
\newcommand\eps{\varepsilon}
\newcommand{\Tr}{{\gamma}}
\newcommand{\Yam}{\mathscr{Y}}
\newcommand\supp{\mathrm{supp}}
\newcommand\sig{\mathrm{sign}}
\newcommand\vol{\mathrm{vol}}
\title{A prescribed scalar and boundary mean curvature problem on compact manifolds with boundary}
\author{Vladmir Sicca and Gantumur Tsogtgerel}
\institution{McGill University}
\date{\today}                                           
\begin{document}

\maketitle

\begin{abstract}
We consider the problem of finding a metric in a given conformal class with prescribed nonpositive scalar curvature and nonpositive boundary mean curvature
on a compact manifold with boundary, and establish a necessary and sufficient condition in terms of a conformal invariant that measures the zero set of the target curvatures.
\end{abstract}

\section{Introduction}
\label{sec:main}

Let $(M,g)$ be a compact Riemannian manifold with boundary.
Suppose that $R'$ is a nonpositive function in $M$, and $H'$ is a nonpositive function on $\partial M$.
Then we ask the question: Can we conformally transform the metric $g$ so that the resulting metric has
the scalar curvature equal to $R'$, and the boundary mean curvature equal to $H'$?
Setting the dimension of $M$ to be three for the sake of this introduction, this is equivalent to solubility of the equation 
\begin{equation}\label{e:prob-eq}
\begin{split}
\begin{cases}
    -8\Delta u+R u&=R' u^{5}\qquad\textrm{in}\,\, M ,\\
2\partial_\nu u+Hu&=H'u^{3}\qquad\textrm{on}\,\, \partial M ,
\end{cases}
\end{split}
\end{equation}
for a positive function $u$,
where $R$ and $H$ are the scalar curvature and the boundary mean curvature of $g$, respectively, and 
$\partial_\nu$ is the outward normal derivative.
Apart from its intrinsic importance, we are led to this problem by the study of the Einstein constraint equations.
More specifically, in the negative Yamabe case, 
solubility of an important class of Lichnerowicz equations on compact manifolds with boundary
is shown to be equivalent to a certain instance of \eqref{e:prob-eq}, cf. \cite{HT13}.

The analogue of this problem on closed manifolds was first studied by Rauzy in \cite{Ra95a},
and more recently, the analysis has been extended to asymptotically flat manifolds by Dilts and Maxwell in \cite{DM15},
and to asymptotically hyperbolic manifolds by Gicquaid in \cite{gicq2019}.
Restricting ourselves to the boundaryless case,
those works show that a generalization of the Yamabe invariant to subsets of the manifold plays an important role,
in that \eqref{e:prob-eq} has a positive solution if and only if 
this generalized Yamabe invariant for the zero set of $R'$ is positive.
The positivity of the Yamabe invariant is in a certain sense a measure of the smallness of the subset, as observed for asymptotically Euclidean manifolds in Lemma 3.15 of \cite{DM15}.

In this work, we adapt these ideas -- particularly the ones in \cite{DM15} -- to the framework of manifolds with boundary. To do so we extend the Yamabe invariant of manifolds with boundary as defined by Escobar to
pairs $(\Omega,\Sigma)$, where $\Omega\subset M$ and $\Sigma\subset \partial M$, which we call the {\em relative Yamabe invariant}, by looking at test functions supported only on $\Omega\cup \Sigma$ and develop some basic properties of the invariant. In particular, we relate this invariant to a relative version of the first eigenvalue of the Laplacian of the pair $(\Omega,\Sigma)$, allowing us to use results related to the linear version of problem (\ref{e:prob-eq}). In the end, we can prove that, in the Yamabe negative case, \eqref{e:prob-eq} has a positive solution if and only if the relative Yamabe invariant of the pair $(\{R'=0\},\{H'=0\})$ is positive.

\subsection{Outline of the paper}

The paper is structured as follows. In section \ref{sec:yamabe} we take the necessary steps to define the relative Yamabe invariant in the framework of manifolds with boundary and establish its relation to the subcritical problem corresponding to (\ref{e:prob-eq}). In section \ref{sec:eigenvalue} we develop the theory of the linearized problem by defining the relative eigenvalue of the Laplacian and showing that it can be used in the study of the relative Yamabe invariant since both have the same sign. In section 4 we bring both ideas together in using variational techniques to solve the prescribed curvature problem for Yamabe negative manifolds with boundary when $R'\leq 0$ and $H'\leq 0$. Finally, in section \ref{sec:examples_and_consequences} we put our results in context with some immediate consequences and examples.

\section{The relative Yamabe invariant}
\label{sec:yamabe}

Let ${M}$ be a smooth, connected, compact manifold with boundary and dimension $n\geq 3$.
Assume that ${M}$ is equipped with a Riemannian metric $g\in W^{s,p}$,
where $sp>n$ and $s\geq 1$.
We denote by $R\in W^{s-2,p}({M})$ the scalar curvature of $(M,g)$, and by $H\in W^{s-1-\frac1p,p}(\partial{M})$ the mean extrinsic curvature of the boundary $\partial{M}$,
with respect to the outer normal.
Let $\Omega\subset M$ and $\Sigma\subset\partial M$ be (relatively) measurable sets,
and consider the functional $E:W^{1,2}({M})\to\R$ defined by
\begin{equation}
E(\varphi)=\int_\Omega |\nabla\varphi|^2dV_g+\frac{n-2}{4(n-1)}\int_\Omega R\varphi^2dV_g+\frac{n-2}{2}\int_\Sigma H(\Tr\varphi)^2d\sigma_g,
\end{equation}
where $\Tr:W^{1,2}({M})\to W^{\frac12,2}(\partial M)$ is the trace map and $dV_g$ and $d\sigma_g$ are the volume forms induced by $g$ on $M$ and on $\partial M$ respectively.
By using the assumptions $sp>n$ and $s\geq1$, one can show that $E(\varphi)$ is finite for each $\varphi\in W^{1,2}(M)$.

Let $\bar{q}=\frac{n}{n-2}$.
Then for $2\leq q\leq 2\bar{q}$, and $2\leq r\leq \bar{q}+1$ with $q\geq r$, and for $b\in\R$, we define
\begin{equation}
\Yam^{q,r}_{b}(\Omega,\Sigma)=\inf_{\varphi\in B^{q,r}_{b}(\Omega,\Sigma)}E(\varphi),
\end{equation}
where
\begin{equation}
B^{q,r}_{b}(\Omega,\Sigma)=\{\varphi\in W^{1,2}(\Omega,\Sigma):\|\varphi\|_{L^q(\Omega)}^q+b\|\Tr\varphi\|_{L^r(\Sigma)}^r=1\},
\end{equation}
and
\begin{equation}
    W^{1,2}(\Omega,\Sigma)=\overline{\left\{\varphi\in C^1(\Omega)\cap C(\overline{\Omega}): \varphi\vert_{M\setminus\Omega}\equiv 0\ \text{and}\ (\Tr \varphi)|_{\partial M\setminus\Sigma}\equiv 0\right\}},
\end{equation}
with the closure taken in $W^{1,2}(M)$.

Note that $q=2\bar{q}$ is the critical exponent of the embedding $W^{1,2}(M)\hookrightarrow L^q(M)$,
while $r=\bar{q}+1$ is the critical exponent for the continuous trace operator $W^{1,2}(M)\hookrightarrow L^r(\partial M)$.

A primary aim of this section is to establish that the sign of $\Yam^{q,r}_{b}(\Omega,\Sigma)$ is a conformal invariant and does not depend on the indices.
To this end, we start by stating some lemmata.

\begin{lemma}[\cite{Esco96a}]
\label{l:polynomial}
Let $q>r>1$, $a>0$ and $b$ be constants (if $b>-a$ we can have $q=r$), and let
\begin{equation}
f_b(x) = ax^q+bx^r ,
\end{equation}
where $b\in\R$ is a parameter.
Then we have the following.
\begin{enumerate}[(a)]
\item
The equation $f_b(x)=1$ has a unique positive solution $x_b>0$.
\item
The correspondence $b\mapsto x_{b}$ is continuous.
\end{enumerate}
\end{lemma}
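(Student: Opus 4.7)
For part (a), I would split into the cases $b\geq 0$ and $b<0$ and analyze the monotonicity of $f_b$ on $(0,\infty)$. In the case $b\geq 0$ (which also accommodates $q=r$, since the hypothesis $b>-a$ combined with $b\geq 0$ gives $a+b>0$), both terms of $f_b$ are nondecreasing and at least one is strictly increasing, so $f_b$ is strictly increasing on $[0,\infty)$ with $f_b(0)=0$ and $f_b(x)\to\infty$ as $x\to\infty$, yielding a unique positive root by the intermediate value theorem. In the case $b<0$ (which forces $q>r$), differentiating gives $f_b'(x)=x^{r-1}(aqx^{q-r}+br)$ with a unique positive zero $x_*=\bigl(-br/(aq)\bigr)^{1/(q-r)}$. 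A sign analysis shows that $f_b$ decreases from $0$ to some negative minimum on $(0,x_*)$ and then strictly increases to $+\infty$ on $(x_*,\infty)$, so $\{f_b=1\}\cap(0,\infty)$ is a singleton $x_b$ lying on the increasing branch. The degenerate sub-case $q=r$ with $b>-a$ is immediate since then $f_b(x)=(a+b)x^q$.

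For part (b), the cleanest argument is via the implicit function theorem applied to $F(x,b)=ax^q+bx^r-1$. In each case of part (a) the solution $x_b$ lies on a strictly increasing branch of $f_b$, so $\partial_x F(x_b,b)=f_b'(x_b)>0$, and the IFT yields continuity (in fact smoothness) of $b\mapsto x_b$. An elementary alternative, which avoids the differentiability issue at $x_*$ when $b=0$, is a subsequential-limit argument: show that $x_b$ stays in a compact subset of $(0,\infty)$ as $b$ ranges over any bounded interval, then extract a convergent subsequence from $x_{b_n}$ for any $b_n\to b$, and invoke uniqueness from (a) to identify the limit as $x_b$.

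The main technical task, whichever route is taken, is establishing a priori bounds on $x_b$ that are uniform on compact parameter sets. Boundedness away from $0$ is immediate from $f_b(0)=0$, continuity of $f_b$ in $x$, and $f_b(x_b)=1$. For an upper bound, rewrite $ax_b^q=1-bx_b^r$: the case $b\geq 0$ gives $x_b\leq a^{-1/q}$ directly, while for $b<0$, splitting according to whether $\tfrac{a}{2}x_b^q$ dominates $|b|x_b^r$ or not leads to an estimate of the form $x_b\leq\max\{(2/a)^{1/q},(2|b|/a)^{1/(q-r)}\}$, which depends continuously on $b$ and is therefore uniform on compact sets. I do not foresee any substantive obstacle beyond careful bookkeeping in the $b<0$ sub-case.
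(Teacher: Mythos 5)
Your argument is correct. Note that the paper does not actually prove this lemma -- it is quoted from Escobar's work \cite{Esco96a} and used as a black box -- so there is no in-paper proof to compare against; your write-up supplies a complete elementary justification of exactly the kind Escobar's original argument uses (monotonicity analysis of $f_b$ plus a nondegeneracy/implicit-function step). The case split in (a) is sound: for $b\geq 0$ (and the degenerate $q=r$, $b>-a$ case) strict monotonicity gives the unique root, and for $b<0$, $q>r$ the sign analysis of $f_b'(x)=x^{r-1}(aqx^{q-r}+br)$ correctly shows $f_b\leq 0<1$ on $(0,x_*]$ and a single crossing of $1$ on the increasing branch. For (b), the IFT route alone suffices: $F(x,b)=ax^q+bx^r-1$ is smooth on $(0,\infty)\times\R$ and you verify $\partial_x F(x_b,b)=f_b'(x_b)>0$ in every case, so continuity (indeed smoothness) of $b\mapsto x_b$ follows, with global identification by the uniqueness in (a). Your worry about a ``differentiability issue at $x_*$ when $b=0$'' is unfounded -- when $b=0$ there is no critical point and $F$ is smooth everywhere relevant -- so the compactness/subsequence alternative, while also correct (the a priori bounds you sketch are right), is not needed.
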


The next lemma is adapted from Proposition 2.3 of \cite{Esco96a}.

\begin{lemma}
\label{l:estimates_L2norms}
Let $2\leq q\leq 2\bar{q}$, and $2\leq r\leq \bar{q}+1$ with $q>r$, and let $b\in\R$.
Then for any $\eps>0$, there exists a constant $C_\eps\geq0$ such that
\begin{equation}
\label{FirstEscobarEstimate}
\|\varphi\|_{L^2(\Omega)}^2 \leq \eps \|\nabla\varphi\|_{L^2(\Omega)}^2 + C_\eps,
\end{equation}
and
\begin{equation}
\label{SecondEscobarEstimate}
\|\Tr\varphi\|_{L^2(\Sigma)}^2 \leq \eps \|\nabla\varphi\|_{L^2(\Omega)}^2 + C_\eps,
\end{equation}
for all $\varphi\in B^{q,r}_{b}(\Omega,\Sigma)$.
\begin{proof}
We will prove that the inequalities hold for all $\varphi$ satisfying
$$
\begin{cases}
\varphi\in C^1(\Omega)\cap C(\overline\Omega),\\
\supp\,\varphi\subset \Omega\cup\Sigma,\\
\|\varphi\|_{L^q(\Omega)}^q+b\|\Tr\varphi\|_{L^r(\Sigma)}^r=:F(\varphi)<2.
\end{cases}
$$
Let us first prove (\ref{FirstEscobarEstimate}). We start by using H\"older's inequality
\begin{equation*}
\begin{split}
    \int_{\Omega}\varphi^2 dV 
    &\leq \left[\int_{\Omega}|\varphi|^q dV \right]^{\frac{2}{q}}\left[\int_{\Omega}1 dV\right]^{\frac{q-2}{q}} \\
     &= \left[\int_{\Omega}|\varphi|^qdV \right]^{\frac{2}{q}}\vol(\Omega)^{\frac{q-2}{q}}\\
     &\leq \left[2-b\int_{\Sigma}|\gamma\varphi|^rd\sigma \right]^{\frac{2}{q}} \vol(\Omega)^{\frac{q-2}{q}}\\
     &\leq \left[2+|b|\int_{\Sigma}|\gamma\varphi|^rd\sigma \right]^{\frac{2}{q}}\vol(\Omega)^{\frac{q-2}{q}}.
\end{split}
\end{equation*}
Since $\frac{2}{q}\leq 1$, we have $(a+b)^{\frac{2}{q}}\leq a^{\frac{2}{q}}+b^{\frac{2}{q}}$ for $a,b\geq 0$, and so
\begin{equation*}
    \begin{split}
    \left[2+|b|\int_{\Sigma}|\gamma\varphi|^rd\sigma \right]^{\frac{2}{q}}\vol(\Omega)^{\frac{q-2}{q}} 
    \leq & \left[2^{\frac{2}{q}}+|b|^{\frac{2}{q}}\left[\int_{\Sigma}|\gamma\varphi|^rd\sigma\right]^{\frac{2}{q}}\right]
    \vol(\Omega)^{\frac{q-2}{q}}\\
    \leq & 2^{\frac{2}{q}}\vol(\Omega)^{\frac{q-2}{q}}+2^{\frac{q}{2}}\vol(\Omega)^{\frac{q-2}{q}}|b|^{\frac{2}{q}}
    \left[\int_{\Sigma}|\gamma\varphi|^rd\sigma \right]^{\frac{2}{q}},
    \end{split}
\end{equation*}
yielding
\begin{equation}
\label{eq:former_star}
    \left[2+|b|\int_{\Sigma}|\gamma\varphi|^rd\sigma \right]^{\frac{2}{q}}\vol(\Omega)^{\frac{q-2}{q}} \leq  C_0+C_0|b|^{\frac{q}{2}}\left[\int_{\Sigma}|\gamma\varphi|^rd\sigma \right]^{\frac{2}{q}},
\end{equation}
where $C_0=2^{\frac{2}{q}}\vol(\Omega)^{\frac{q-2}{q}}$.

Now, the trace inequality says that there is a positive constant $C_1=C_1(M,g,r)$ such that
\begin{equation*}
\begin{split}
    \left[\int_{\Sigma}|\gamma \varphi|^rdV\right]^{\frac{2}{r}} =  \left[\int_{\partial M}|\gamma \varphi|^rdV\right]^{\frac{2}{r}} 
    &\leq C_1\int_M \big( |\nabla \varphi|^2+\varphi^2\big) dV 
    = C_1\int_\Omega \big( |\nabla \varphi|^2+\varphi^2\big) dV,
\end{split}
\end{equation*}
which leads to
\begin{equation}
\label{eq:former_starstar}
    \begin{split}
    \left[\int_{\Sigma}|\gamma \varphi|^rdV\right]^{\frac{2}{q}} 
    &\leq C_1^{\frac{r}{q}}\left[\int_\Omega |\nabla \varphi|^2dV+\int_\Omega\varphi^2 dV\right]^{\frac{r}{q}} \\
     &\leq  C_1^{\frac{r}{q}}\left(\int_\Omega |\nabla \varphi|^2dV\right)^{\frac{r}{q}}+C_1^{\frac{r}{q}}\left(\int_\Omega\varphi^2 dV\right)^{\frac{r}{q}},
\end{split}
\end{equation}
since $\frac{r}{q}\leq 1$.
Plugging (\ref{eq:former_starstar}) into (\ref{eq:former_star}) we get
\begin{equation}
\begin{split}
    \int_\Omega \varphi^2 dV 
    & \leq  C_0+C_0|b|^{\frac{2}{q}}\left[\int_{\Sigma}|\gamma\varphi|^rd\sigma\right]^{\frac{2}{q}} \\
     & \leq C_0+C_0|b|^{\frac{2}{q}}C_1^{\frac{r}{q}}\left[\left(\int_\Omega |\nabla\varphi|^2 dV\right)^{\frac{r}{q}}+\left(\int_\Omega \varphi^2dV\right)^{\frac{r}{q}}\right]\\
     &=C_0+C_3\left[\left(\int_\Omega |\nabla\varphi|^2 dV\right)^{\frac{r}{q}}+\left(\int_\Omega \varphi^2dV\right)^{\frac{r}{q}}\right],
\end{split}
\end{equation}
where we introduced $C_3=C_0|b|^{\frac{2}{q}}C_1^{\frac{r}{q}}>0$.

Given $\epsilon_1>0$, there is $C_4(\epsilon_1)$ such that $t^{\frac{r}{q}}\leq \left(\eps_1/C_3\right)t+C_4$ for $t>0$. 
Applying this with $t=\int_\Omega \varphi^2dV$, we get
$$
\begin{array}{ccl}
    \int_{\Omega}\varphi^2dV &\leq & C_0+C_3\left(\int_\Omega |\nabla\varphi|^2dV \right)^{\frac{r}{q}}+\frac{\epsilon_1}{C_3} C_3 \int_\Omega\varphi^2dV+C_4.
\end{array}
$$
Then with $t=\int_\Omega|\nabla \varphi|^2dV$, we have
$$
\begin{array}{cccl}
    &\int_{\Omega}\varphi^2dV &\leq & C_0+\epsilon_1 \int_\Omega |\nabla\varphi|^2dV+C_4 +\epsilon_1\int_\Omega\varphi^2dV+C_4\\
    \Rightarrow&(1-\epsilon_1)\int_\Omega\varphi^2dV&\leq& \epsilon_1\int_\Omega |\nabla\varphi|^2dV+C_0+2C_4\\
    \Rightarrow&\int_\Omega \varphi^2dV &\leq& \frac{\epsilon_1}{1-\epsilon_1}\int_\Omega|\nabla\varphi|^2dV+C_5 
\end{array}
$$
where $C_5:=\frac{C_0+2C_4}{1-\epsilon_1}$. 
This gives (\ref{FirstEscobarEstimate}).

Now, again by the Sobolev trace inequality, there is $C_7>0$ such that
$$
\int_\Sigma (\Tr\varphi)^2d\sigma=\int_{\partial M}(\Tr\varphi)^2d\sigma\leq C_7\left(\int_M\varphi^2dV+\int_M\left|\nabla\left(\varphi^2\right)\right|dV\right)=C_7\left(\int_\Omega\varphi^2dV+\int_\Omega\left|\nabla\left(\varphi^2\right)\right|dV\right).
$$
However, if $\epsilon_2>0$,
\begin{equation}
\label{eq:gradient_u_2}
|\nabla(u^2)|=|2u\nabla u|\leq 2|u||\nabla u|\leq \epsilon_2|\nabla u|^2+\frac{u^2}{\epsilon_2},
\end{equation}
because $\left(\sqrt{\epsilon_2} |\nabla u|-\frac{u}{\sqrt {\epsilon_2}}\right)^2\geq 0$.

But Inequality (\ref{FirstEscobarEstimate}) says that there is $C_6$ such that
\begin{equation}
\label{eq:former_starstarstar}
\int_\Omega \varphi^2dV\leq \epsilon^2_2\int_\Omega|\nabla \varphi|^2dV+C_6,
\end{equation}
which combined with (\ref{eq:gradient_u_2}) and (\ref{eq:former_starstarstar}) implies that
$$
\begin{array}{ccl}
    \int_\Sigma (\Tr\varphi)^2 d\sigma & \leq& C_7\epsilon^2_2\int_{\Omega}|\nabla\varphi|^2dV+C_7C_6+C_7\epsilon_2\int_{\Omega}|\nabla\varphi|^2dV+\frac{C_7}{\epsilon_2}\int_{\Omega}\varphi^2dV \\
     &\leq & C_7(\epsilon^2_2+2\epsilon_2)\int_\Omega |\nabla\varphi|^2dV+C_7C_6+\frac{C_7C_6}{\epsilon_2},
\end{array}
$$
again by (\ref{eq:former_starstarstar}). The estimate (\ref{SecondEscobarEstimate}) follows from taking $\epsilon=C_7(\epsilon_2^2+2\epsilon_2)$ and $C_\epsilon=\max\{C_7C_6+\frac{C_7C_6}{\epsilon_2}, C_5(\epsilon)\}$.
\end{proof}
\end{lemma}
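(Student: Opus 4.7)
The plan is to prove \eqref{FirstEscobarEstimate} first, and then bootstrap to \eqref{SecondEscobarEstimate} by applying a Sobolev trace inequality to $\varphi^2$. The core difficulty is that the normalization $\|\varphi\|_{L^q(\Omega)}^q + b\|\Tr\varphi\|_{L^r(\Sigma)}^r = 1$ only directly controls $\|\varphi\|_{L^q(\Omega)}$ when $b \geq 0$; for $b < 0$ the trace contribution moves to the positive side and must be reabsorbed via the trace embedding $W^{1,2}(M) \hookrightarrow L^r(\partial M)$, producing a self-referential estimate that has to be closed using Young's inequality.

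For \eqref{FirstEscobarEstimate} I would start with H\"older on $\Omega$ to obtain $\|\varphi\|_{L^2(\Omega)}^2 \leq \vol(\Omega)^{(q-2)/q}\|\varphi\|_{L^q(\Omega)}^2$, then use the normalization (replacing $b$ by $|b|$) together with the subadditivity $(x+y)^{2/q}\leq x^{2/q}+y^{2/q}$, which holds because $2/q\leq 1$. This yields a bound of the form $\|\varphi\|_{L^2(\Omega)}^2 \leq C_0 + C'|b|^{2/q}\|\Tr\varphi\|_{L^r(\Sigma)}^{2r/q}$. Next I would invoke the trace inequality $\|\Tr\varphi\|_{L^r(\partial M)}^2 \leq C_1(\|\nabla\varphi\|_{L^2(\Omega)}^2+\|\varphi\|_{L^2(\Omega)}^2)$ and raise it to the power $r/q$, again using subadditivity since $r/q\leq 1$. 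The outcome is an estimate for $\|\varphi\|_{L^2(\Omega)}^2$ in terms of $\|\nabla\varphi\|_{L^2(\Omega)}^{2r/q}$ and $\|\varphi\|_{L^2(\Omega)}^{2r/q}$, both with exponents strictly less than $1$ because $r<q$. Young's inequality $t^{r/q}\leq\eta t+C(\eta)$ then allows one to split off an arbitrarily small multiple of $\|\nabla\varphi\|_{L^2(\Omega)}^2$ and absorb the residual $\|\varphi\|_{L^2(\Omega)}^2$ term into the left-hand side.

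For \eqref{SecondEscobarEstimate} the natural route is to apply an $L^1$-type Sobolev trace inequality to $\varphi^2$, giving $\|\Tr\varphi\|_{L^2(\Sigma)}^2 \leq C\bigl(\|\varphi\|_{L^2(\Omega)}^2 + \|\nabla(\varphi^2)\|_{L^1(\Omega)}\bigr)$. Combined with the pointwise bound $|\nabla(\varphi^2)| = 2|\varphi||\nabla\varphi| \leq \eta|\nabla\varphi|^2 + \eta^{-1}\varphi^2$, everything reduces to controlling $\|\varphi\|_{L^2(\Omega)}^2$, which is exactly what \eqref{FirstEscobarEstimate} supplies; choosing $\eta$ and the parameter in \eqref{FirstEscobarEstimate} suitably small then delivers the desired inequality. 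I expect the main obstacle to be the closure step in the first estimate: the combined H\"older--trace chain produces an implicit inequality in which $\|\varphi\|_{L^2(\Omega)}^2$ sits on both sides, and it is precisely the strict inequality $q > r$ that makes Young's inequality capable of absorbing the nonlinear trace contribution. If one only had $q = r$, Young's inequality would not produce the small prefactor needed, and the estimate would degenerate into a tautology.
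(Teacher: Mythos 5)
Your proposal is correct and follows essentially the same route as the paper: H\"older plus the normalization and subadditivity of $x\mapsto x^{2/q}$, the trace embedding raised to the power $r/q$, and Young's inequality exploiting $r/q<1$ (hence the strict inequality $q>r$) to absorb both the gradient and the $L^2$ terms, then the $L^1$-trace inequality applied to $\varphi^2$ together with \eqref{FirstEscobarEstimate} for the boundary estimate. Your closing remark about the failure at $q=r$ matches the role this hypothesis plays in the paper's argument.
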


Later we are going to use the fact that if $C_3$ decreases, $C_4$ may decrease as well and thus $C_\epsilon$ may decrease. That is, if we consider $C_\epsilon=C_\epsilon(b)$, we can assume that $|b_1|>|b_2|\Rightarrow C_\epsilon(b_1)\geq C_\epsilon(b_2)$.

The following result will be important.

\begin{lemma}\label{t:generalities_y}
Let $2\leq q\leq 2\bar{q}$, and $2\leq r\leq \bar{q}+1$ with $q\geq r$, and let $b\in\R$.
Then we have the following.
\begin{enumerate}[(a)]
\item Given $\epsilon>0$, there exists $K_\epsilon>0$ such that
\begin{equation}
\label{eq:R_estimate_L2}
    \left|\int_\Omega R\varphi^2dV_g\right|\leq \epsilon ||\varphi||^2_{W^{1,2}(\Omega)}+K_\epsilon||\varphi||^2_{L^2(\Omega)},
\end{equation}
and that
\begin{equation}
\label{eq:H_estimate_L2}
    \left|\int_\Sigma H(\gamma\varphi)^2d\sigma_g\right|\leq \epsilon ||\gamma\varphi||^2_{W^{\frac{1}{2},2}(\Sigma)}+ K_\epsilon||\gamma\varphi||^2_{L^2(\Sigma)}.
\end{equation}
\item For $q>r$, if $B^{q,r}_b(\Omega,\Sigma)\neq \emptyset$, 
$\Yam^{q,r}_{b}(\Omega,\Sigma)$ is finite.
\item
Again, if $q>r$, there are constants $C$ and $K$ such that 
\begin{equation}
\|\nabla\varphi\|_{L^2(\Omega)}^2 \leq K E(\varphi) + C ,
\end{equation}
for all $\varphi\in B^{q,r}_{b}(\Omega,\Sigma)$.
\item
The quantity $\Yam_g(\Omega,\Sigma):=\Yam^{2\bar{q},r}_0(\Omega,\Sigma)$ is a conformal invariant, that is, $\Yam_g(\Omega,\Sigma)=\Yam_{\tilde{g}}(\Omega,\Sigma)$ for any two metrics $\tilde{g}\sim g$ of class $W^{s,p}$.
We refer to $\Yam_g(\Omega,\Sigma)$ as {\em the} relative Yamabe invariant of $(\Omega,\Sigma)$.
\item
The quantity $\Yam^{2\bar{q},\bar{q}+1}_b(\Omega,\Sigma)$ is also a conformal invariant.
\end{enumerate}
\end{lemma}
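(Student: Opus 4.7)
My plan is to handle the five items in order. For part (a), the hypotheses $R\in W^{s-2,p}(M)$ and $H\in W^{s-1-1/p,p}(\partial M)$ with $sp>n$ and $s\geq 1$ place $R$ and $H$ in dual Sobolev spaces against which $\varphi^2$ and $(\gamma\varphi)^2$ can be paired. In the easier case $s\geq 2$, $R\in L^\sigma(M)$ for some $\sigma>n/2$, so H\"older's inequality gives $|\int_\Omega R\varphi^2\,dV_g|\leq \|R\|_{L^\sigma}\|\varphi\|_{L^{2\sigma'}}^2$ with $2\sigma'<2\bar q$; interpolating $L^{2\sigma'}$ between $L^2$ and $L^{2\bar q}$ (the latter controlled by $W^{1,2}$ via Sobolev embedding), followed by Young's inequality, produces the desired split $\epsilon\|\varphi\|_{W^{1,2}}^2+K_\epsilon\|\varphi\|_{L^2}^2$. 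The borderline case $s=1$ is handled by duality: view $R\in W^{-1,p}$ and estimate $\|\varphi^2\|_{W^{1,p'}}$ via a Sobolev multiplication property that uses $sp>n$. The boundary estimate is analogous, with the trace embedding $W^{1,2}(M)\hookrightarrow W^{\frac12,2}(\partial M)\hookrightarrow L^{\bar q+1}(\partial M)$ replacing the interior Sobolev embedding.

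For parts (b) and (c), I would combine (a) with Lemma~\ref{l:estimates_L2norms} to absorb the indefinite terms into $\|\nabla\varphi\|_{L^2(\Omega)}^2$. Using (a) and the boundedness of $\gamma:W^{1,2}(M)\to W^{\frac12,2}(\partial M)$, one gets
\begin{equation*}
E(\varphi)\geq \|\nabla\varphi\|_{L^2(\Omega)}^2-c\epsilon\|\varphi\|_{W^{1,2}(\Omega)}^2-cK_\epsilon\bigl(\|\varphi\|_{L^2(\Omega)}^2+\|\gamma\varphi\|_{L^2(\Sigma)}^2\bigr).
\end{equation*}
Applying Lemma~\ref{l:estimates_L2norms} (which requires the strict inequality $q>r$) to both $L^2$ norms on the right, and choosing $\epsilon$ small enough, yields $E(\varphi)\geq \tfrac12\|\nabla\varphi\|_{L^2(\Omega)}^2-C$ uniformly on $B^{q,r}_b(\Omega,\Sigma)$. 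This simultaneously delivers the finite lower bound for $\Yam^{q,r}_b$ in (b) and the rearranged inequality $\|\nabla\varphi\|_{L^2}^2\leq 2E(\varphi)+2C$ in (c).

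For (d) and (e), I would use the conformal change of variables $\tilde g=\phi^{4/(n-2)}g$ with positive $\phi$ of regularity $W^{s,p}$, together with the substitution $\tilde\varphi=\phi^{-1}\varphi$. Since $\phi$ is bounded above and away from zero on $M$, this is a bijection on $W^{1,2}(\Omega,\Sigma)$ that preserves the vanishing conditions outside $\Omega\cup\Sigma$. The classical Escobar transformation formulas for the conformal Laplacian $L_g=-\frac{4(n-1)}{n-2}\Delta_g+R$ and its associated boundary operator $B_g=\frac{2}{n-2}\partial_\nu+H$, combined with $dV_{\tilde g}=\phi^{2\bar q}dV_g$ and $d\sigma_{\tilde g}=\phi^{\bar q+1}d\sigma_g$, give $E_{\tilde g}(\tilde\varphi)=E_g(\varphi)$ after integration by parts. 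The exponents $2\bar q$ and $\bar q+1$ are precisely the ones making $\int_\Omega|\tilde\varphi|^{2\bar q}dV_{\tilde g}=\int_\Omega|\varphi|^{2\bar q}dV_g$ and $\int_\Sigma|\gamma\tilde\varphi|^{\bar q+1}d\sigma_{\tilde g}=\int_\Sigma|\gamma\varphi|^{\bar q+1}d\sigma_g$, so the constraint set $B^{2\bar q,r}_0$ is preserved in (d) and $B^{2\bar q,\bar q+1}_b$ is preserved for every $b$ in (e). Taking infima over these invariant sets gives the claimed invariances.

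The main obstacle is the low-regularity case $s=1$ in part (a), where $R$ is merely a negative-order distribution and a straightforward application of H\"older is unavailable; one must instead bound $\|\varphi^2\|_{W^{1,p'}}$ via a Sobolev multiplication theorem that leans crucially on $sp>n$. The remaining arguments are essentially bookkeeping, with the minor subtlety in (d) and (e) that $\phi$ must have sufficient regularity (guaranteed by $W^{s,p}$ with $sp>n$) to justify the integration by parts underlying the transformation identity.
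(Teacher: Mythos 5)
Your proposal is correct and follows essentially the same route as the paper: part (a) by pairing the curvatures in negative-order Sobolev spaces against $\varphi^2$ and using a subcritical multiplication/interpolation estimate to obtain the $\epsilon$-split, parts (b) and (c) by absorbing the curvature terms into $\|\nabla\varphi\|_{L^2(\Omega)}^2$ via Lemma \ref{l:estimates_L2norms} (hence the need for $q>r$), and parts (d), (e) via the identity $E_{\tilde g}(\varphi)=E_g(\phi\varphi)$ together with the conformal invariance of the critical-exponent constraint integrals. The only cosmetic difference is that you split (a) into the cases $s\geq 2$ and $s=1$, whereas the paper treats all $s\geq 1$ (including $1<s<2$) uniformly through the multiplication $W^{1-\delta,2}\otimes W^{1-\delta,2}\to W^{2-s,p'}$; your duality argument extends to that range with the obvious modification.
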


\begin{proof}
To prove (a) we first refer to the discussion on page 9 of \cite{HT13} to justify that we can bound the curvature terms of $E(\varphi)$:

    If $\varphi\in W^{(1-\delta),2}(M)$ for a sufficiently small $\delta>0$, we have $\varphi^2\in W^{2-s,p'}(M)=\left[W^{s-2,p}(M)\right]'$ for $sp>n$ and $s\geq 1$, with $p'$ the H\"older conjugate of $p$. In particular, if $\varphi\in W^{1,2}(\Omega,\Sigma)$, there is $K_1$ such that
    \begin{equation}
    \left|\int_\Omega R\varphi^2dV_g\right|=\left|\int_M R\varphi^2dV_g\right|\leq K_1||R||_{W^{s-2,p}(M)}||\varphi^2||_{W^{2-s,p'}(M)}.    
    \end{equation}
    However the pointwise multiplication is bounded as a map $W^{(1-\delta),2}(M)\otimes W^{(1-\delta),2}(M)\to W^{2-s,p'}(M)$, so there is $K_2$ a constant providing
\begin{equation}
    ||\varphi^2||_{W^{2-s,p'}(M)}\leq K_2||\varphi||_{W^{(1-\delta),2}(M)}^2,
\end{equation}
    for a suitably small $\delta$.
    
    It follows, by an interpolation argument, that one gets for all $ \epsilon>0$ a positive $K_\epsilon$ satisfying
    \begin{equation}
    \label{eq:the_item_before_interpolation}
    \left|\int_\Omega R\varphi^2dV_g\right|\leq \epsilon ||\varphi||^2_{W^{1,2}(\Omega)}+K_\epsilon||\varphi||^2_{L^2(\Omega)},    
    \end{equation}
    proving the first estimate in (a).
    
    If $q>r$, we can combine this with (\ref{FirstEscobarEstimate}) and get that for all $\epsilon>0$, there is $L_\epsilon>0$ such that
    \begin{equation}
    \left|\int_\Omega R\varphi^2dV_g\right|\leq \epsilon ||\nabla\varphi||^2_{L^2(\Omega)}+L_\epsilon.    
    \end{equation}
    Since $\varphi\in W^{1,2}(\Omega)$ implies $(\gamma \varphi)\in W^{\frac{1}{2},2}(\partial M)$, Corollary A.5 of \cite{HT13} says $(\gamma \varphi)^2\in W^{\frac{1}{p}+1-s, p'}(\partial M)$ if
    $$
    \begin{cases}
    \sigma = \frac{1}{p}+1-s\leq \frac{1}{2}\\
    \sigma-\frac{n-1}{p'}=\frac{1}{p}+1-s-\frac{n}{p'}+\frac{1}{p'}=2-s-\frac{n}{p'}\leq \frac{1}{2}-\frac{n-1}{2}
    \end{cases}
    $$
    which can be rewritten as
    $$
    \begin{cases}
    ps\geq \frac{p}{2}+1\\
    \frac{ps}{n}\geq 1+\frac{p}{n}-\frac{p}{2}.
    \end{cases}
    $$
    Now, since $sp>n>2$, the second condition is satisfied since $\frac{ps}{n}>1>1+\frac{p}{n}-\frac{p}{2}$. Adding that $s\geq 1$ we get that the first condition is satisfied because
    $$
    \frac{p}{2}+1<\frac{ps}{2}+1\leq \frac{ps}{2}+\frac{ps}{n}<ps.
    $$
    So, as in the derivation of equation (\ref{eq:the_item_before_interpolation}) above, there is $\tilde K_\epsilon>0$ such that
\begin{equation}
    \left|\int_\Sigma H(\gamma\varphi)^2d\sigma_g\right|\leq \epsilon ||\gamma\varphi||^2_{W^{\frac{1}{2},2}(\Sigma)}+\tilde K_\epsilon||\varphi||^2_{L^2(\Sigma)},
\end{equation}
which is the second estimate in (a).
    
Using the trace inequality followed by (\ref{SecondEscobarEstimate}), if $q>r$ we get that given $\epsilon>0$, there is $L_\epsilon>0$ providing
\begin{equation}
\left|\int_\Sigma H(\gamma\varphi)^2d\sigma_g\right|\leq \epsilon ||\nabla\varphi||^2_{W^{1,2}(\Omega)}+ L_\epsilon.
\end{equation}

We can finally prove (c), which is a lower regularity version of proposition 2.4 in \cite{Esco96a}:
$$
\begin{array}{ccl}
    ||\nabla\varphi||^2_{L^2(\Omega)} &=& E(\varphi)-\frac{n-2}{4(n-1)}\int_\Omega R\varphi^2dV_g-\frac{n-2}{2}\int_\Sigma H (\gamma\varphi)^2d\sigma_g  \\
     &\leq& E(\varphi)+2\epsilon||\nabla \varphi||^2_{L^2(\Omega)}+2L_\epsilon,
\end{array}
$$
for a suitable choice of $\epsilon$ and $L_\epsilon$. Thus, making sure $\epsilon<\frac{1}{2}$, we get
$$
||\nabla\varphi||^2_{L^2(\Omega)}\leq \frac{1}{1-2\epsilon}E(\varphi)+\frac{2}{1-2\epsilon}L_\epsilon
$$
and the result follows by choosing $K=\frac{1}{1-2\epsilon}$ and $C=\frac{2}{1-2\epsilon}L_\epsilon$.

With this in hand, it is simple to prove (b), since for all $\varphi$:
$$
E(\varphi)\geq \frac{1}{K}||\nabla\varphi||^2_{L^2(\Omega)}-\frac{C}{K}\geq -\frac{C}{K}.
$$

For (d) and (e), let $\tilde g=\phi^{\frac{4}{n-2}}g$ and let us see how each element of the expression of $E(\varphi)$ transforms between the two metrics:
$$
\begin{cases}
    dV_{\tilde g}=\phi^{\frac{2n}{n-2}}dV_g=\phi^{2\bar q}dV_g\\
    d\sigma_{\tilde g}=\phi^{\frac{2n-2}{n-2}}d\sigma_g=\phi^{\bar q+1}d\sigma_g\\
    R_{\tilde g}=\phi^{-\frac{n+2}{n-2}}\left(-\frac{4(n-1)}{n-2}\Delta_g \phi+R_g\phi\right)\\
    H_{\tilde g}=\phi^{-\frac{n}{n-2}}\left(H_g\phi+\frac{2}{n-2}\frac{\partial \phi}{\partial \eta} \right)\\
    |\nabla \varphi|^2_{\tilde g}=\phi^{-\frac{4}{n-2}}|\nabla\varphi|_g^2
\end{cases}
$$
The next step is to prove that $E_{\tilde g}(\varphi)=E_g(\phi\varphi)$:
$$
\begin{array}{ccl}
    E_{\tilde g}(\varphi) &=& \int_{\Omega}\left(|\nabla\varphi|^2_{\tilde g}+\frac{n-2}{4(n-1)}R_{\tilde g}\varphi^2\right)dV_{\tilde g}+\frac{n-2}{2}\int_{\Sigma}H_{\tilde g}(\gamma\phi)^2 d\sigma_{\tilde g}  \\
    & = & \int_{\Omega}\left(\phi^{-\frac{4}{n-2}}|\nabla\varphi|^2_{g}+\frac{n-2}{4(n-1)}\phi^{-\frac{n+2}{n-2}}\left(-\frac{4(n-1)}{n-2}\Delta_g \phi+R_g\phi\right)\varphi^2\right)\phi^{\frac{2n}{n-2}}dV_{ g}\\
    &&+\frac{n-2}{2}\int_{\Sigma}\phi^{-\frac{n}{n-2}}\left(H_g\phi+\frac{2}{n-2}\frac{\partial \phi}{\partial \eta} \right)(\gamma\varphi)^2 \phi^{\frac{2n-2}{n-2}}d\sigma_{\tilde g}\\
    &=&\int_{\Omega}\left(\phi^2|\nabla\varphi|^2_{g}-\phi\varphi^2\Delta_g \phi+\frac{n-2}{4(n-1)}R_g(\phi\varphi)^2\right)dV_{ g}+\int_{\Sigma}\frac{n-2}{2} H_g\left(\phi(\gamma\varphi)\right)^2+\phi(\gamma\varphi)^2\frac{\partial \phi}{\partial \eta}  d\sigma_{\tilde g}\\
    &=&E_g(\phi\varphi)+\int_{\Omega}\left(-|\nabla(\phi\varphi)|_g^2+\phi^2|\nabla\varphi|^2_g-\phi\varphi^2\Delta_g\phi\right)dV_g+\int_\Sigma\phi(\gamma\varphi)^2\frac{\partial \phi}{\partial \eta}  d\sigma_{\tilde g}\\
    &=&E_g(\phi\varphi)+\int_{\Omega}\left(-|\nabla(\phi\varphi)|_g^2+\phi^2|\nabla\varphi|^2_g+\nabla(\phi\varphi^2)\cdot\nabla\phi\right)dV_g
\end{array}
$$
where we used Green's first identity in the last equality. Then if we use the identities
\begin{equation}
|\nabla(\phi\varphi)|_g^2=\phi^2|\nabla\varphi|^2_g+\varphi^2|\nabla\phi|^2_g+2\phi\varphi\nabla\phi\cdot\nabla\varphi,
\end{equation}
and
\begin{equation}
\left(\nabla(\phi\varphi^2)\right)\cdot\nabla\phi=\varphi^2|\nabla\phi|^2_g+2\phi\varphi\nabla\varphi\cdot\nabla\phi, 
\end{equation}
we get that
\begin{equation}
    E_{\tilde{g}}(\varphi)=E_g(\phi\varphi).
\end{equation}

Finally, the transformation of the respective measures under the conformal change of the metric implies that
$$
\int_\Omega |\varphi|^{2\bar q} dV_{\tilde g}=\int_{\Omega}|\phi\varphi|^{2\bar q}dV_g
$$
and
$$
\int_\Sigma |\gamma\varphi|^{\bar q+1} d\sigma_{\tilde g}=\int_{\Sigma}|\phi(\gamma\varphi)|^{\bar q+1}d\sigma_g
$$
so $\varphi\in B^{2\bar q,r}_{0,\tilde g}(\Omega,\Sigma)$ if, and only if, $\phi\varphi\in B^{2\bar q,r}_{0,g}(\Omega,\Sigma)$, proving (c), and $\varphi\in B^{2\bar q,\bar q+1}_{b,\tilde g}(\Omega,\Sigma)$ if, and only if, $\phi\varphi\in B^{2\bar q,\bar q+1}_{b,g}(\Omega,\Sigma)$, proving (d).
\end{proof}

\begin{theorem}\label{t:subcrit}
Let ${M}$ be a smooth connected Riemannian manifold with dimension $n\geq 3$ and with a metric $g\in W^{s,p}$, 
where we assume $sp>n$ and $s\geq 1$.
Let $q\in[2,2\bar{q})$, and $r\in[2,\bar{q}+1)$ with $q>r$. Let also $b\in\R$ and let $\Omega$ and $\Sigma$ be relatively open sets.
Then, there exists a strictly positive function $\phi\in B^{q,r}_b(\Omega, \Sigma)\cap W^{s,p}({M})$, such that
\begin{equation}\label{e:yamabe}
\begin{cases}
-\Delta\phi+\frac{n-2}{4(n-1)}R\phi&=\lambda q\phi^{q-1},\ in\ \Omega\\
\Tr\partial_\nu\phi+\frac{n-2}2H\Tr\phi&=\lambda rb(\Tr\phi)^{r-1},\ in\ \Sigma
\end{cases}
\end{equation}
where the sign of $\lambda$ is the same as that of $\Yam_b^{q,r}(\Omega, \Sigma)$ defined above.
\end{theorem}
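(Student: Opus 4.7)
\emph{Outline.} I plan to apply the direct method of the calculus of variations to produce a minimizer of $E$ on $B^{q,r}_b(\Omega,\Sigma)$, extract the Euler--Lagrange equation via a Lagrange multiplier, bootstrap the regularity, and conclude strict positivity from the strong maximum principle. The strict subcriticality $q<2\bar{q}$ and $r<\bar{q}+1$ supplies the compactness that makes this approach work. The sign of the multiplier will be read off by testing the weak form of \eqref{e:yamabe} against the minimizer itself.

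\emph{Existence of a nonnegative minimizer.} Starting from a minimizing sequence $\{\varphi_k\}\subset B^{q,r}_b(\Omega,\Sigma)$, the hypothesis $q>r$ lets me invoke Lemma \ref{t:generalities_y}(c) to bound $\|\nabla\varphi_k\|_{L^2(\Omega)}$ uniformly, and then Lemma \ref{l:estimates_L2norms} bounds $\|\varphi_k\|_{W^{1,2}(M)}$. Passing to a subsequence, $\varphi_k\rightharpoonup\phi$ in $W^{1,2}(M)$; since $W^{1,2}(\Omega,\Sigma)$ is closed (hence weakly closed), the limit lies there. Subcriticality makes the embeddings $W^{1,2}(M)\hookrightarrow L^q(M)$ and $W^{1,2}(M)\hookrightarrow L^r(\partial M)$ compact, so $\varphi_k\to\phi$ strongly in $L^q(\Omega)$ and $\Tr\varphi_k\to\Tr\phi$ strongly in $L^r(\Sigma)$; this lets the constraint pass to the limit, so $\phi\in B^{q,r}_b(\Omega,\Sigma)$. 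Weak lower semicontinuity of the Dirichlet part, together with continuity of the curvature integrals (secured from the bounds in Lemma \ref{t:generalities_y}(a) combined with strong convergence in lower-order norms by interpolation), yields $E(\phi)\le\liminf E(\varphi_k)=\Yam^{q,r}_b(\Omega,\Sigma)$. Since $|\phi|$ produces the same value of $E$ and of the constraint, I may assume $\phi\ge 0$.

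\emph{Euler--Lagrange, regularity, positivity, sign.} The Lagrange multiplier rule applied at the constrained minimum $\phi$ produces $\lambda\in\R$ such that $\phi$ satisfies the weak form of \eqref{e:yamabe}; the constraint differential $G'(\phi):\psi\mapsto q\int_\Omega \phi^{q-1}\psi\,dV_g+rb\int_\Sigma(\Tr\phi)^{r-1}\Tr\psi\,d\sigma_g$ is nontrivial because its pairing with $\phi$ itself equals the positive bracket computed below. A standard elliptic bootstrap for the conformal Laplacian with the Robin-type boundary relation in the $W^{s,p}$-setting, analogous to the arguments of \cite{HT13}, upgrades $\phi$ to $W^{s,p}(M)$. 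Viewing \eqref{e:yamabe} as a linear equation in $\phi$ with coefficient $\tfrac{n-2}{4(n-1)}R-\lambda q\phi^{q-2}$ (and an analogous boundary coefficient), the strong maximum principle and the Hopf boundary lemma applied to the nontrivial $\phi\ge 0$ give $\phi>0$ throughout $\Omega\cup\Sigma$. Finally, testing the weak form of \eqref{e:yamabe} against $\phi$ produces
\begin{equation*}
E(\phi)=\lambda\bigl(q\|\phi\|_{L^q(\Omega)}^q+rb\|\Tr\phi\|_{L^r(\Sigma)}^r\bigr),
\end{equation*}
and a short case split on $\sig b$, using the constraint $\|\phi\|_{L^q}^q+b\|\Tr\phi\|_{L^r}^r=1$ and $q\ge r\ge 2$, shows that the parenthesised factor is bounded below by $r>0$ in both cases; therefore $\sig\lambda=\sig E(\phi)=\sig \Yam^{q,r}_b(\Omega,\Sigma)$.

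\emph{Main obstacle.} The substantive technical work is concentrated in the elliptic regularity bootstrap from $W^{1,2}$ up to $W^{s,p}$ and the accompanying strong maximum principle and Hopf lemma, carried out in the low-regularity setting $g\in W^{s,p}$ with mixed (interior plus Robin-type boundary) conditions and rough lower-order coefficients. By comparison, the direct-method step and the final sign computation are essentially routine once the Escobar-type estimates of Lemmas \ref{l:estimates_L2norms} and \ref{t:generalities_y} are in hand.
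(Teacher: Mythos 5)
Your proposal is correct and follows essentially the same route as the paper: the direct method with the compactness supplied by the subcritical exponents and the Escobar-type estimates, the Euler--Lagrange equation with a Lagrange multiplier, regularity and strict positivity via the rough-metric elliptic theory of \cite{HT13} (the paper cites its Corollary B.4 and Lemma B.7 where you invoke a bootstrap and the maximum principle), and the sign of $\lambda$ read off by testing against $\phi$. Your explicit lower bound $q\|\phi\|_{L^q(\Omega)}^q+rb\|\Tr\phi\|_{L^r(\Sigma)}^r\geq r$ is a correct fleshing-out of the paper's brief ``multiply by $\phi$ and integrate by parts'' step.
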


\begin{proof}
The above equation is the Euler-Lagrange equation for the functional $E$ over positive functions with the Lagrange multiplier $\lambda$, so it suffices to show that $E$ attains its infimum $\Yam_b^{q,r}(\Omega,\Sigma)$ over $B_b^{q,r}(\Omega,\Sigma)$ at a positive function $\phi\in W^{s,p}({M})$ (notice that the existence of a minimizer does not depend on the sets $\Omega$ and $\Sigma$ being open).

Let $\{\phi_i\}_i\subset B_b^{q,r}(\Omega,\Sigma)$ be a sequence satisfying $E(\phi_i)\to\Yam_b^{q,r}(\Omega,\Sigma)$. Since $E(\varphi)=E(|\varphi|)$, for all $\varphi$, we can assume $\phi_i\geq 0,\forall i$.
If $\varphi\in B_b^{q,r}(\Omega,\Sigma)$ satisfies the bound $E(\varphi)\leq \Lambda$, then inequality (\ref{FirstEscobarEstimate}) and item (c) of Lemma \ref{t:generalities_y} tell us there is $C(\Lambda)>0$ such that $\|\varphi\|_{W^{1,2}(\Omega)}\leq C(\Lambda)$.
Now, since $\Yam_b^{q,r}(\Omega,\Sigma)$ is finite, we conclude that $\{\phi_i\}_i$ is bounded in $W^{1,2}({\Omega})$.
By the reflexivity of $W^{1,2}({\Omega})$, the compactness of $W^{1,2}({\Omega})\hookrightarrow L^{q}({\Omega})$,
and the compactness of the trace map $\Tr:W^{1,2}({\Omega})\hookrightarrow L^{r}(\Sigma)$,
there exists an element $\phi\in W^{1,2}({\Omega})$ and a subsequence $\{\phi'_i\}\subset\{\phi_i\}$ such that, for a fixed small $\delta>0$:
\begin{enumerate}[(i)]
    \item $\phi'_i\rightharpoonup\phi$ in $W^{1,2}({\Omega})$,
    \item $\phi'_i\to\phi$ in $W^{1-\delta,2}(\Omega)$, 
    \item $\phi'_i\rightarrow\phi$ in $L^{q}({\Omega})$,
    \item $\phi'_i\rightarrow\phi$ in $L^{2}({\Omega})$,
    \item $\Tr\phi'_i\to \Tr\phi$ in $W^{\frac{1}{2}-\delta, 2}$,
    \item $\Tr\phi'_i\rightarrow\Tr\phi$ in $L^{r}(\Sigma)$.
\end{enumerate}
Items (iii) and (vi) imply $\phi\in B_b^{q,r}(\Omega,\Sigma)$, hence $\phi\geq 0$.

We have to prove that $E(\phi)\leq \lim E(\phi'_i)=\mathcal{Y}_b^{q,r}(\Omega,\Sigma)$, because then $E(\phi)=\mathcal{Y}^{q,r}_b(\Omega,\Sigma)$ and thus $\phi$ satisfies \eqref{e:yamabe}.

Indeed, the proof of Lemma \ref{t:generalities_y} (a) shows that $\varphi\mapsto \langle R, \varphi\rangle_{\Omega}$ is continuous in $W^{1-\delta,2}(\Omega)$ and $\varphi\mapsto  \langle H,\varphi \rangle_{\Sigma}$ is continuous in $W^{\frac{1}{2}-\delta, 2}(\Sigma)$. So the maps $\varphi\mapsto \langle R,\varphi^2\rangle_{\Omega}$ and $\varphi\mapsto \langle H,\varphi^2\rangle_\Sigma$ have, respectively, the same regularity. Therefore, by (ii) and (v), we have
\begin{equation}
    E(\phi)-\mathcal{Y}^{q,r}_b(\Omega,\Sigma)=E(\phi)-\lim E(\phi'_i)=\|\nabla \phi\|^2_{L^2(\Omega)}-\lim \|\nabla\phi'_i\|^2_{L^2(\Omega)}
\end{equation}
and so by (i) and (iv), $E(\phi)\leq \mathcal{Y}^{q,r}_b(\Omega,\Sigma)$ as expected.

Corollary B.4 of \cite{HT13} implies that $\phi\in W^{s,p}({M})$, as is detailed in the proof of Lemma \ref{l:regularity} below. 
Also, since $\phi\in B^{q,r}_b(\Omega,\Sigma)$, we have $\phi\not\equiv 0$. But, if we call $N=(\gamma\phi)^{-1}(\{0\})$, Lemma B.7 of \cite{HT13} tells us that $\phi>0$.

Finally, multiplying \eqref{e:yamabe} by $\phi$ and integrating by parts, we conclude that the sign of the Lagrange multiplier $\lambda$ is the same as that of $\Yam_b^{q,r}(\Omega, \Sigma)$.
\end{proof}

Under the conformal scaling $\tilde{g}=\varphi^{2\bar{q}-2}g$, the scalar curvature and the mean extrinsic curvature transform as
\begin{equation}
\label{eq:constant_sign_curvatures}
\begin{cases}
\tilde{R}&\textstyle=\varphi^{1-2\bar{q}}(-\frac{4(n-1)}{n-2}\Delta\varphi+R\varphi),\\
\tilde{H}&\textstyle=(\Tr\varphi)^{-\bar{q}}(\frac{2}{n-2}\Tr\partial_\nu\varphi+H\Tr\varphi),
\end{cases}
\end{equation}
so assuming the conditions of the above theorem we infer that any given metric $g\in W^{s,p}$ 
can be transformed to the metric $\tilde{g}=\phi^{2\bar{q}-2}g$ with the continuous scalar curvature $\tilde{R}=\frac{4\lambda q(n-1)}{n-2}\phi^{q-2\bar{q}}$ in $\Omega$,
and the continuous mean curvature $\tilde{H}=\frac{2\lambda br}{n-2}(\Tr\phi)^{r-\bar{q}-1}$ on $\Sigma$,
where the conformal factor $\phi$ is as in the theorem.
In other words, given any metric $g\in W^{s,p}$,
there exist $\phi \in W^{s,p}({M})$ with $\phi>0$, $\tilde{R}\in W^{s,p}({M})$ and $\tilde{H}\in W^{s-\frac1p,p}(\Sigma)$, having {\em constant sign}, such that
\begin{equation}\label{e:conformal-transformation}
\begin{cases}\textstyle
-\frac{4(n-1)}{n-2}\Delta\phi+R\phi&=\tilde{R}\phi^{2\bar{q}-1},\\
\textstyle
\frac2{n-2}\Tr\partial_\nu\phi+H\Tr\phi&=\tilde{H}(\Tr\phi)^{\bar{q}}.
\end{cases}
\end{equation}
We will prove below that the conformal invariant $\Yam_g$ of the metric $g$ completely determines the sign of $\tilde{R}$, independent of the indices $q$, $r$ or $b$,
giving rise to a Yamabe classification of metrics in $W^{s,p}$.
Note that the sign of the boundary mean curvature can be controlled by the sign of the parameter $b\in\R$,
unless of course $\tilde{R}\equiv0$, in which case we are forced to have $\tilde{H}\equiv0$ in the above argument.

We will need a lemma.
\begin{lemma}\label{l:b_mononicity}
If $q\in [2,2\bar q]$, $r\in[2,\bar q+1]$, $q>r$, then the map $b\mapsto \mathcal{Y}^{q,r}_b(\Omega,\Sigma)$ is non-increasing.
\begin{proof}
Let $b'<b$ and $\varphi\in B_b^{q,r}(\Omega, \Sigma)$. Then,as a consequence of Lemma \ref{l:polynomial}, there exists $k_\varphi>0$ such that $k_\varphi\varphi\in B_{b'}^{q,r}(\Omega, \Sigma)$. That means
$$
\begin{array}{ccl}
    1&=&k_\varphi^q||\varphi||^q_{L^q(\Omega)}+k_\varphi^rb'||\gamma\varphi||^r_{L^r(\Sigma)} \\
     & =&||\varphi||^q_{L^q(\Omega)}+b||\gamma\varphi||^r_{L^r(\Sigma)}\\
     &=&||\varphi||^q_{L^q(\Omega)}+b'||\gamma\varphi||^r_{\Sigma, r}+(b-b')||\gamma\varphi||^r_{L^r(\Sigma)},
\end{array}
$$
and so, since $b-b'> 0$, we get
$$
k_\varphi^q||\varphi||^q_{L^q(\Omega)}+k_\varphi^rb'||\gamma\varphi||^r_{L^r(\Sigma)}\geq ||\varphi||^q_{L^q(\Omega)}+b'||\gamma\varphi||^r_{L^r(\Sigma)},
$$
implying $k_\varphi\geq 1$.

Now, we have $E(k_\varphi\varphi)=k_\varphi^2E(\varphi)\geq E(\varphi)$. Hence $\mathcal{Y}^{q,r}_{b'}(\Omega, \Sigma)\geq \mathcal{Y}^{q,r}_b(\Omega, \Sigma)$, and thus the map is non-increasing.
\end{proof}
\end{lemma}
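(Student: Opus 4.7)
The plan is to exploit the quadratic homogeneity of $E$ and rescale each admissible function for $B^{q,r}_{b}$ into $B^{q,r}_{b'}$. Given $b' < b$ and any $\varphi \in B^{q,r}_b(\Omega, \Sigma)$, I would apply Lemma~\ref{l:polynomial} to the polynomial $f(k) = \|\varphi\|_{L^q(\Omega)}^q k^q + b'\|\Tr\varphi\|_{L^r(\Sigma)}^r k^r$—which falls under the $q > r$ branch of that lemma, so no sign assumption on the coefficient is needed—to obtain a unique $k_\varphi > 0$ with $k_\varphi \varphi \in B^{q,r}_{b'}(\Omega, \Sigma)$.

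The key algebraic step will be verifying $k_\varphi \geq 1$. Writing $A = \|\varphi\|_{L^q(\Omega)}^q$ and $B = \|\Tr\varphi\|_{L^r(\Sigma)}^r$, the constraint $\varphi \in B^{q,r}_b$ reads $A + bB = 1$, so that
\begin{equation*}
k_\varphi^q A + k_\varphi^r b' B \;=\; 1 \;=\; A + bB \;=\; (A + b'B) + (b - b')B \;\geq\; A + b'B,
\end{equation*}
because $(b-b')B \geq 0$. Setting $h(k) = A k^q + b' B k^r$, this rearranges to $h(k_\varphi) \geq h(1)$. When $b' \geq 0$, the function $h$ is strictly increasing on $[0,\infty)$ and $k_\varphi \geq 1$ is immediate. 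When $b' < 0$, $h$ decreases from $h(0)=0$ to a negative minimum and then increases to $+\infty$; since $h(k_\varphi)=1>0$, $k_\varphi$ must lie on the increasing branch past the positive root of $h$, and a short case split on whether $1$ sits left or right of the minimum still forces $k_\varphi \geq 1$. The degenerate case $B=0$ reduces to $k_\varphi = 1$ trivially.

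With $k_\varphi \geq 1$ established, quadratic homogeneity of $E$ gives $E(k_\varphi\varphi)=k_\varphi^2 E(\varphi) \geq E(\varphi)$. Noting that $\varphi \mapsto k_\varphi \varphi$ is a bijection between $B^{q,r}_b(\Omega,\Sigma)$ and $B^{q,r}_{b'}(\Omega,\Sigma)$ (its inverse is the same construction after swapping $b$ and $b'$), taking the infimum over $\varphi \in B^{q,r}_b$ on both sides yields
\begin{equation*}
\Yam^{q,r}_{b'}(\Omega,\Sigma) \;=\; \inf_{\varphi\in B^{q,r}_b} E(k_\varphi\varphi) \;\geq\; \inf_{\varphi\in B^{q,r}_b} E(\varphi) \;=\; \Yam^{q,r}_b(\Omega,\Sigma),
\end{equation*}
which is precisely the asserted non-increasing monotonicity.

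The main obstacle is the case $b'<0$ in the second step: since $h$ is non-monotone there, one has to locate $k_\varphi$ on the increasing branch of $h$ before any comparison with $h(1)$ can be read off as a comparison of the arguments. Once this is done the rest is bookkeeping.
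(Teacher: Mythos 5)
Your argument is essentially the paper's own proof: rescale each $\varphi\in B^{q,r}_b(\Omega,\Sigma)$ by the factor $k_\varphi$ supplied by Lemma~\ref{l:polynomial}, deduce $k_\varphi\geq 1$ from comparing the two constraint identities, and conclude via the quadratic homogeneity $E(k_\varphi\varphi)=k_\varphi^2E(\varphi)$, exactly as in the paper. If anything you are more careful than the printed proof, which asserts $k_\varphi\geq 1$ without addressing the non-monotone behaviour of $k\mapsto Ak^q+b'Bk^r$ when $b'<0$ and leaves implicit the surjectivity of $\varphi\mapsto k_\varphi\varphi$ onto $B^{q,r}_{b'}(\Omega,\Sigma)$ needed to pass to the infima; both of the justifications you add at these points are correct.
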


\begin{theorem}\label{t:yclass}
Let $({M},g)$ be a smooth, compact, connected, $n$-dimensional Riemannian manifold with boundary,
where we assume that the components of the metric $g$ are (locally) in $W^{s,p}$, with $sp>n$, $s\geq 1$, and $n\geq 3$.
Let $\Omega\subset M$ and $\Sigma\subset\partial M$ be (relatively) closed sets.
Then, the sign of $\Yam^{q,r}_{b}(\Omega,\Sigma)$ is independent of $q\in[2,2\bar{q}]$, $r\in[2,\bar{q}+1]$, and $b\in\R$, as long as $q>r$.
In particular, this sign is a conformal invariant.
\end{theorem}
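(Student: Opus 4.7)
The plan is to characterize the sign of $\Yam^{q,r}_b(\Omega,\Sigma)$ purely in terms of whether $E$ takes positive or negative values on $W^{1,2}(\Omega,\Sigma)\setminus\{0\}$, a property manifestly independent of $q$, $r$, and $b$. Conformal invariance of the sign then drops out because Lemma~\ref{t:generalities_y}(d) already exhibits $\Yam^{2\bar{q},r}_0=\Yam_g(\Omega,\Sigma)$ as a conformal invariant, and once all signs are known to coincide this forces the sign of every $\Yam^{q,r}_b$ to be conformally invariant.

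The basic tool is a scaling observation: for any $\varphi\in W^{1,2}(\Omega,\Sigma)\setminus\{0\}$ the support condition gives $\|\varphi\|_{L^q(\Omega)}>0$, and Lemma~\ref{l:polynomial} applied to $a\lambda^q+c\lambda^r=1$ with $a=\|\varphi\|^q_{L^q}>0$ and $c=b\|\Tr\varphi\|^r_{L^r}$ yields a positive $\lambda$ with $\lambda\varphi\in B^{q,r}_b(\Omega,\Sigma)$, the hypothesis $q>r$ accommodating either sign of $c$. Combined with the homogeneity $E(\lambda\varphi)=\lambda^2 E(\varphi)$, this immediately yields the two easy implications: $\Yam^{q,r}_b>0$ forces $E>0$ on $W^{1,2}(\Omega,\Sigma)\setminus\{0\}$, and any $\varphi$ with $E(\varphi)<0$ scales into $B^{q,r}_b$ to witness $\Yam^{q,r}_b<0$.

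The crux is the remaining direction: if $E>0$ on $W^{1,2}(\Omega,\Sigma)\setminus\{0\}$ then $\Yam^{q,r}_b>0$. Parts (b)--(c) of Lemma~\ref{t:generalities_y} make $\Yam^{q,r}_b$ finite and give uniform $W^{1,2}$ control on any minimizing sequence $\{\phi_i\}\subset B^{q,r}_b$; after extracting a subsequence, $\phi_i\rightharpoonup\phi$ in $W^{1,2}$ with strong convergence in $W^{1-\delta,2}(M)$ and with $\Tr\phi_i\to\Tr\phi$ in $W^{1/2-\delta,2}(\partial M)$. Supposing $\Yam^{q,r}_b=0$ for contradiction, if $\phi\neq 0$ then weak lower semicontinuity of $\|\nabla\cdot\|^2_{L^2}$ together with continuity of the curvature integrals in the fractional norms (Lemma~\ref{t:generalities_y}(a)) give $E(\phi)\leq\liminf E(\phi_i)=0$, contradicting $E(\phi)>0$. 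If $\phi=0$, the curvature integrals of $E(\phi_i)$ vanish in the limit, while the constraint $\|\phi_i\|_{L^q}^q+b\|\Tr\phi_i\|_{L^r}^r=1$ combined with the Sobolev and trace inequalities forces $\|\phi_i\|_{W^{1,2}}\geq c_0>0$; here a short case analysis on the sign of $b$ and on whether $q$ or $r$ is critical is needed, with the subcase $q<2\bar{q}$, $r<\bar{q}+1$, $\phi=0$ being vacuous since then both sides of the constraint would vanish. Because $\|\phi_i\|_{L^2}\to 0$, the lower bound transfers into $\|\nabla\phi_i\|_{L^2}^2\geq c_0^2/2$ for large $i$, so $\liminf E(\phi_i)>0$, again a contradiction.

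Assembling these equivalences, the sign of $\Yam^{q,r}_b$ depends only on which of the three mutually exclusive regimes (strictly positive, vanishing, or attaining negative values) the functional $E$ falls into on $W^{1,2}(\Omega,\Sigma)\setminus\{0\}$, so it is independent of $q$, $r$, and $b$; conformal invariance then follows from Lemma~\ref{t:generalities_y}(d). I expect the concentration case $\phi=0$ at critical exponents to be the main obstacle, since lack of compactness in $L^{2\bar{q}}(\Omega)$ or $L^{\bar{q}+1}(\Sigma)$ prevents the weak limit from inheriting the constraint; the resolution is the elementary coercivity argument sketched above, which uses the Sobolev lower bound on $\|\phi_i\|_{W^{1,2}}$ together with the strong $L^2$-convergence of $\phi_i$ to zero to extract positive energy from the gradient term of $E$.
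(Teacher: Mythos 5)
Your proposal is correct, but it takes a genuinely different and more unified route than the paper's. The paper transfers signs between parameter pairs in stages: negativity and nonnegativity pass between arbitrary $(q,r,b)$ and $(q',r',b')$ by pure scaling, and strict positivity is then propagated first in $b$ at fixed $(q,r)$ (using the monotonicity of $b\mapsto\Yam^{q,r}_b$ from Lemma \ref{l:b_mononicity} together with separate minimizing-sequence contradictions for $b<0$ and $b>0$), then in $r$ at $b=0$ (where the constraint does not see $r$), and finally in $q$ at $b=0$ via yet another contradiction argument. You instead characterize each sign by a single parameter-free property of $E$ on $W^{1,2}(\Omega,\Sigma)\setminus\{0\}$: $\Yam^{q,r}_b<0$ iff $E$ attains a negative value, $\Yam^{q,r}_b>0$ iff $E$ is strictly positive on nonzero functions, with the only nontrivial implication (strict positivity of $E$ implies $\Yam^{q,r}_b>0$) handled by one dichotomy on the weak limit of a minimizing sequence, using the same ingredients as the paper (Lemma \ref{t:generalities_y}(a)--(c), Lemma \ref{l:estimates_L2norms}, continuity of the Sobolev and trace embeddings). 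This buys uniformity: one argument covers all admissible $(q,r,b)$ at once, including the critical exponents, because your lower bound $\|\phi_i\|_{W^{1,2}}\geq c_0$ needs only continuity, not compactness, of the embeddings; the paper's staged route, by contrast, yields the $b$-monotonicity as a by-product but requires several separate limiting arguments. Two small points you should make explicit, both at the level of detail the paper itself assumes: the weak limit $\phi$ lies in $W^{1,2}(\Omega,\Sigma)$ because this is a closed, hence weakly closed, subspace of $W^{1,2}(M)$ (needed to invoke $E(\phi)>0$ when $\phi\neq 0$); and a nonzero $\varphi\in W^{1,2}(\Omega,\Sigma)$ satisfies $\|\varphi\|_{L^q(\Omega)}>0$ (it vanishes a.e.\ off $\Omega$, so vanishing a.e.\ on $\Omega$ would force $\varphi=0$), so Lemma \ref{l:polynomial} applies with $a>0$ and every nonzero function scales into $B^{q,r}_b$; in particular $B^{q,r}_b$ is nonempty for all admissible parameters unless $W^{1,2}(\Omega,\Sigma)=\{0\}$, in which case every invariant equals $+\infty$ and the claim is vacuous. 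The conformal invariance of the common sign then follows from Lemma \ref{t:generalities_y}(d) exactly as you indicate, which is also how the paper concludes.
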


\begin{proof}
Let $q,q'\in[2,2\bar{q}]$, $r,r'\in[2,\bar{q}+1]$, and $b,b'\in\R$, with $q>r$ and $q'>r'$.

Suppose $\Yam^{q,r}_{b}(\Omega,\Sigma)<0$.
Then $E(\varphi)<0$ for some $\varphi\in B^{q,r}_{b}(\Omega,\Sigma)$.
By scaling, there is some $k>0$ satisfying $k\varphi\in B^{q',r'}_{b'}(\Omega,\Sigma)$.
Now since $E(k\varphi)=k^2E(\varphi)$, we infer that $\Yam^{q',r'}_{b'}(\Omega,\Sigma)<0$.

Suppose that $\Yam^{q,r}_{b}(\Omega,\Sigma)\geq0$.
Then $E(\varphi)\geq0$ for all $\varphi\in B^{q,r}_{b}(\Omega,\Sigma)$.
On the other hand, for any $\psi\in B^{q',r'}_{b'}(\Omega,\Sigma)$, there is some $k>0$ such that $k\psi\in B^{q,r}_{b}(\Omega,\Sigma)$,
and hence $E(\psi)=k^{-2}E(k\psi)\geq0$.
This implies that $\Yam^{q',r'}_{b'}(\Omega,\Sigma)\geq0$.

What remains is to prove that $\Yam^{q,r}_{b}(\Omega,\Sigma)>0$ implies $\Yam^{q',r'}_{b'}(\Omega,\Sigma)>0$.
Since the parameters are arbitrary, in light of the preceding paragraph, this would also establish the equivalence between 
$\Yam^{q,r}_{b}(\Omega,\Sigma)=0$ and $\Yam^{q',r'}_{b'}(\Omega,\Sigma)=0$.

Let us start by fixing $q$ and $r$. Assume that $\Yam^{q,r}_{b}(\Omega,\Sigma)=\alpha>0$ and, by monotonicity, we can say $b<0$.
Let $\psi\in B^{q,r}_{b'}(\Omega,\Sigma)$.
Then there exists some $k>0$ such that $k\psi\in B^{q,r}_{b}(\Omega,\Sigma)$, and
\begin{equation}
\label{eq:E>0}
    E(k\psi)=k^{2}E(\psi)\geq \alpha.
\end{equation}

The value of $k$ comes from the equation
\begin{equation}
k^q\|\psi\|_{L^q(\Omega)}^q+bk^r\|\Tr\psi\|_{L^r(\Sigma)}^r = 1 .
\end{equation}
Let us focus first at $b'=0$. Then $||\varphi||^q_q=1$ for all $\varphi$ and
\begin{equation}
\label{e:k_from_polynomial}
k^{q-r} = k^{-r} -b\|\Tr\psi\|_{L^r(\Sigma)}^r
\end{equation}
If $\mathcal{Y}_0^{q,r}(\Omega,\Sigma)=0$, there is a minimizing sequence $\{\psi_m\}_m\subset B_0^{q,r}(\Omega, \Sigma)$ such that $E(\psi_m)\to 0$. But, if $k_m\psi_m\in B_b^{q,r}(\Omega,\Sigma)$, constraint (\ref{eq:E>0}) implies $k_m\to\infty$ and thus, by equation (\ref{e:k_from_polynomial}), $\|\Tr\psi_m\|^r_{L^r(\Sigma)}\to \infty$ for $b<0$.

On the other hand, from $E(\psi_m)\to 0$ it follows that there is $M>0$ such that $E(\psi_m)<M$ for all $m$ and Lemma \ref{t:generalities_y} (c) implies there are $K$ and $C$ such that $\|\nabla\psi_m\|^2_{L^2(\Omega)}\leq KM+C$. But by (\ref{FirstEscobarEstimate}), given $\epsilon>0$, there exists $C_\epsilon>0$ such that
\begin{equation}
    \|\psi_m\|^2_{L^2(\Omega)}\leq \epsilon \|\nabla\psi_m\|^2_{L^2(\Omega)}+C_\epsilon
\end{equation}
and hence the sequence $\left\{\|\psi_m\|^2_{W^{1,2}(\Omega)}\right\}_m$ is bounded and the continuity of the trace map implies that $\left\{\|\Tr \psi_m\|^2_{W^{\frac{1}{2}, 2}(\Sigma)}\right\}_m$ is bounded as well. But since $r<\bar q+1$ the Sobolev embedding theorem implies $W^{\frac{1}{2}, 2}(\Sigma)\hookrightarrow L^r(\Sigma)$ and there is a uniform bound on $\|\Tr \psi_m\|^r_{L^r(\Sigma)}$. Contradiction. So $\mathcal{Y}_0^{q,r}(\Omega,\Sigma)=\epsilon$ for some $\epsilon>0$.

Let us look now at what happens for $b'>0$. Assume $\mathcal{Y}^{q,r}_{b'}(\Omega,\Sigma)=0$ and let $\{\psi_m\}_m\subset B_{b'}^{q,r}(\Omega,\Sigma)$ be a minimizing sequence. If you take $b_2>0$, $k_m>0$ such that $k_m\psi_m\in B_{b_2}^{q,r}(\Omega,\Sigma)$, then $E(k_m\psi_m)=k_m^2E(\psi_m)$. So either $k_m\to\infty$ or $E(k_m\psi_m)\to 0$ and $\mathcal{Y}_{b_2}^{q,r}(\Omega,\Sigma)=0$. But $k_m$ satisfies
\begin{equation}
    k_m^q\|\psi_m\|^q_{L^q(\Omega)}+k_m^rb_2\|\Tr\psi_m\|^r_{L^r(\Sigma)}=1
\end{equation}
as a consequence, since $b_2>0$, $k_m\to \infty$ implies $\|\psi_m\|^q_{L^q(\Omega)}\to 0$ and $\|\Tr\psi_m\|^r_{L^r(\Sigma)}\to 0$. But that would contradict the fact that $\psi_m\in B_{b'}^{q,r}(\Omega,\Sigma)$, for all $m$. Hence $k_m$ is bounded and $\mathcal{Y}_{b_2}^{q,r}(\Omega,\Sigma)=0$, for all $b_2>0$.

Going back to the assumption $\mathcal{Y}_{b'}^{q,r}(\Omega,\Sigma)=0$ for some $b'>0$, let $\{b_m\}_m$ be a decreasing sequence of strictly positive numbers such that $b_m\to 0$. Then $\mathcal{Y}_{b_m}^{q,r}(\Omega,\Sigma)=0,\forall m$, and for each $m$, there is $\psi_m\in B_{b_m}^{q,r}(\Omega,\Sigma)$ such that $E(\psi_m)<\frac{1}{m}$. In particular, $E(\psi_m)\leq 1,\forall m$. Now, since the sequence of the $b_m$'s is bounded, the remark after the proof of lemma \ref{l:estimates_L2norms} allows us to use theorem \ref{t:generalities_y}, item (c) to get constants $K,C>0$ such that
\begin{equation}
    \|\nabla \psi_m\|^2_{L^2(\Omega)}\leq K+M
\end{equation}
which, just as above, implies $\left\{\|\Tr \psi_m\|^r_{L^r(\Sigma)}\right\}_m$ is uniformly bounded.

Finally, for each $m$ let $l_m>0$ be such that $l_m\psi_m\in B^{q,r}_0(\Omega, \Sigma)$. Then $E(l_m\psi_m)=l_m^2E(\psi_m)$ and
\begin{equation}
    l_m^q\|\psi_m\|^q_{L^q(\Omega)}=1
\end{equation}
so $l_m\to \infty$ iff $\|\psi_m\|_{L^q(\Omega)}\to 0$. However
\begin{equation}
    \|\psi_m\|^q_{L^q(\Omega)}+b_m\|\Tr\psi_m\|^r_{L^r(\Sigma)}=1.
\end{equation}
It follows, since $b_m\to 0$, that $\|\psi_m\|^q_{L^q(\Omega)}\to 0$ implies $\|\Tr\psi_m\|^r_{L^r(\Sigma)}\to \infty$, which cannot be. As a consequence, $\{l_m\}_m$ is uniformly bounded and $E(l_m\psi_m)\to 0$, which implies that $\mathcal{Y}(\Omega,\Sigma)=0$.

Summing up, we have that if there is $b<0$ such that $\mathcal{Y}^{q,r}_b(\Omega,\Sigma)>0$, $\mathcal{Y}^{q,r}_0(\Omega,\Sigma)>0$, while if there is $b'>0$ $\mathcal{Y}^{q,r}_{b'}(\Omega,\Sigma)=0$, $\mathcal{Y}^{q,r}_0(\Omega,\Sigma)=0$, and we can't have both at the same time. So the sign of $\mathcal{Y}^{q,r}_b(\Omega,\Sigma)$ does not depend on $b$.

Now, fixing $b=0$, by definition $\mathcal{Y}^{q,r}_0(\Omega,\Sigma)$ does not depend on $r$, and what we said about $b$ holds for $r$ as well.

To prove that the sign of the relative Yamabe invariant does not depend on $q$ either we will also argue that the sign of $\mathcal{Y}^{q,r}_0(\Omega,\Sigma)$ is independent of $q$. If $q>q'$, there exists $C_q>0$ such that $\|\psi\|_{L^{q'}(\Omega)}\leq C_q\|\psi\|_{L^q(\Omega)}$. On the other hand, if $\psi\in B_0^{q,r}(\Omega, \Sigma)$, let $k>0$ be such that $k\psi\in B_0^{q',r}(\Omega, \Sigma)$, so
$$
\begin{array}{cccl}
    &1 &=&k^{q'}\|\psi\|^{q'}_{L^{q'}(\Omega)}  \\
     &&\leq & k^{q'}C^q_{q}\|\psi\|^q_{L^q(\Omega)}\\
     &&=& k^{q'}C^q_{q}\\
\end{array}
$$
and that implies
$$
 k^{q'}=\frac{1}{\|\psi\|_{L^{q'}(\Omega)}^{q'}}\geq\frac{1}{C^q_q}.
$$
Hence, $E(k\psi)=k^2E(\psi)\geq\frac{E(\psi)}{C^{2q}_q}$ and $\mathcal{Y}_0^{q,r}(\Omega,\Sigma)>0$ implies $\mathcal{Y}_0^{q',r}(\Omega, \Sigma)>0$. 

Now, let $\mathcal{Y}^{q',r}_0(\Omega,\Sigma)>0$ and assume $\mathcal{Y}^{q,r}_0(\Omega,\Sigma)=0$. Then we have $\mathcal{Y}^{q',r}_1(\Omega,\Sigma)=\epsilon>0$ and $\mathcal{Y}^{q,r}_1(\Omega,\Sigma)=0$.

Let $\left\{\psi_m\right\}_m\subset B_1^{q,r}(\Omega,\Sigma)$ be a minimizing sequence and $k_m>0$ be such that $k_m\psi_m\in B_1^{q',r}(\Omega,\Sigma)$. So $E(k_m\psi_m)=k_m^2E(\psi_m)\geq \epsilon$ implies $k_m\to \infty$. But $k_m$ satisfies $k_m^{q'}||\psi_m||_{L^{q'}(\Omega)}^{q'}+k_m^r\|\Tr \psi_m\|^r_{L^r(\Sigma)}=1$, thus $\|\psi_m\|_{L^{q'}(\Omega)}\to 0$ and $\|\Tr\psi_m\|_{L^r(\Sigma)}\to 0$. Also, $q'\geq 2,r\geq 2$, so
\begin{equation}
\label{eq:L2norm_and_boundary_to_zero}
    \begin{cases}
    \|\psi_m\|_{L^2(\Omega)}\to 0\\
    \|\Tr \psi_m\|_{L^2(\Sigma)}\to 0
    \end{cases}.
\end{equation}

On the other hand, $\|\Tr\psi_m\|_{L^r(\Sigma)}\to 0$ implies $\|\psi_m\|_{L^q(\Omega)}\to 1$ and thus, since $q\leq 2\bar q$ implies there is $K_q>0$ such that $\|\psi_m\|^2_{W^{1,2}(\Omega)}\geq \frac{\|\psi_m\|_{L^q(\Omega)}}{K_q}$:
\begin{equation}
\label{eq:bound_nabla}
    \lim \|\psi_m\|^2_{W^{1,2}(\Omega)}\geq \frac{1}{K_q}\Rightarrow \lim \|\nabla\psi_m\|^2_{L^2(\Omega)}\geq \frac{1}{K_q}
\end{equation}

Finally, since $E(\psi_m)\to 0$, item (c) of theorem \ref{t:generalities_y} implies $\|\nabla \psi_m\|_{L^2(\Omega)}$ is bounded and then, along with (\ref{eq:L2norm_and_boundary_to_zero}), item (a) of the same theorem implies
\begin{equation}
    \lim \left|\int_\Omega R\psi_m^2dV_g\right|=\lim\left|\int_\Sigma H(\Tr\psi_m)^2d\sigma_g\right|=0
\end{equation}
thus, by definition of $E$:
\begin{equation}
    \lim \|\nabla\psi_m\|^2_{L^2(\Omega)}=\lim E(\psi_m)=0
\end{equation}
which contradicts (\ref{eq:bound_nabla}). So $\mathcal{Y}^{q,r}_0(\Omega,\Sigma)>0$ and the sign of $\mathcal{Y}^{q,r}_b(\Omega,\Sigma)$ does not depend on $q$.
\end{proof}

We finish this section by proving some general properties of $\Yam_b^{q,r}(\Omega,\Sigma)$ that will not be necessary for the rest of the paper.

\begin{lemma}[Monotonicity]
\label{l:monotonicity}
If $\Omega_1\subset\Omega_2$ and $\Sigma_1\subset\Sigma_2$, then we have
$$
\Yam^{q,r}_b(\Omega_1,\Sigma_1)\geq\Yam^{q,r}_b(\Omega_2,\Sigma_2).
$$
\begin{proof}
It follows straight from the definition since we expand the set of test functions $\varphi$.
\end{proof}
\end{lemma}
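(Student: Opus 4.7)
The plan is to reduce the monotonicity inequality to a straightforward inclusion of test function sets. The defining formula
\[
\Yam^{q,r}_b(\Omega,\Sigma)=\inf_{\varphi\in B^{q,r}_{b}(\Omega,\Sigma)}E(\varphi)
\]
is an infimum, so enlarging the admissible set can only decrease the infimum. Thus I intend to prove the single inclusion $B^{q,r}_b(\Omega_1,\Sigma_1)\subseteq B^{q,r}_b(\Omega_2,\Sigma_2)$ (viewing both as subsets of $W^{1,2}(M)$) and check that the value of $E(\varphi)$ does not depend on whether $\varphi$ is regarded as a test function for $(\Omega_1,\Sigma_1)$ or for $(\Omega_2,\Sigma_2)$.

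First, I would verify the inclusion $W^{1,2}(\Omega_1,\Sigma_1)\subseteq W^{1,2}(\Omega_2,\Sigma_2)$. The generating set for $W^{1,2}(\Omega_1,\Sigma_1)$ consists of functions $\varphi\in C^1(\Omega_1)\cap C(\overline{\Omega_1})$ with $\varphi\equiv0$ on $M\setminus\Omega_1$ and $(\Tr\varphi)\equiv0$ on $\partial M\setminus\Sigma_1$. Since $\Omega_1\subseteq\Omega_2$ and $\Sigma_1\subseteq\Sigma_2$ yield $M\setminus\Omega_2\subseteq M\setminus\Omega_1$ and $\partial M\setminus\Sigma_2\subseteq\partial M\setminus\Sigma_1$, any such $\varphi$ automatically satisfies $\varphi\equiv0$ on $M\setminus\Omega_2$ and $(\Tr\varphi)\equiv0$ on $\partial M\setminus\Sigma_2$. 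Hence the generating set is a subset of the one for $W^{1,2}(\Omega_2,\Sigma_2)$, and passing to $W^{1,2}(M)$-closures preserves the inclusion.

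Second, for any such $\varphi$, the support condition $\varphi\equiv 0$ on $M\setminus\Omega_1$ means that every integrand appearing in $E(\varphi)$ or in the normalizing constraint vanishes a.e.\ outside $\Omega_1$ (respectively outside $\Sigma_1$ for the boundary terms). Consequently,
\[
\int_{\Omega_2}|\nabla\varphi|^2 dV_g=\int_{\Omega_1}|\nabla\varphi|^2 dV_g,\qquad \int_{\Omega_2}R\varphi^2 dV_g=\int_{\Omega_1}R\varphi^2 dV_g,
\]
and the analogous identities hold for $\int_\Sigma H(\Tr\varphi)^2 d\sigma_g$, $\|\varphi\|_{L^q(\Omega)}^q$, and $\|\Tr\varphi\|_{L^r(\Sigma)}^r$. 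Thus $\varphi\in B^{q,r}_b(\Omega_1,\Sigma_1)$ implies $\varphi\in B^{q,r}_b(\Omega_2,\Sigma_2)$ with the same value of $E(\varphi)$.

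Taking the infimum over the (a priori larger) admissible set on the right then gives $\Yam^{q,r}_b(\Omega_2,\Sigma_2)\leq\Yam^{q,r}_b(\Omega_1,\Sigma_1)$. There is no serious obstacle here: the only point requiring a moment's care is that the integrals must be genuinely insensitive to the enlargement from $\Omega_1,\Sigma_1$ to $\Omega_2,\Sigma_2$, and this is immediate from the a.e.\ vanishing of $\varphi$ and $\Tr\varphi$ off $\Omega_1$ and $\Sigma_1$ respectively.
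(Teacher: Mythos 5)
Your proposal is correct and is essentially the paper's own argument: the infimum defining $\Yam^{q,r}_b$ can only decrease when the admissible set is enlarged, since $B^{q,r}_b(\Omega_1,\Sigma_1)\subseteq B^{q,r}_b(\Omega_2,\Sigma_2)$ and $E(\varphi)$ is unchanged for functions supported in $\Omega_1\cup\Sigma_1$; the paper states exactly this in one line and you merely spell out the details. The only (harmless, and also glossed over by the paper) imprecision is the claim that the generating set for $W^{1,2}(\Omega_1,\Sigma_1)$ sits inside the generating set for $W^{1,2}(\Omega_2,\Sigma_2)$ -- a function $C^1$ on $\Omega_1$ extended by zero need not be $C^1$ on $\Omega_2$ across $\partial\Omega_1$, so one should rather say it lies in the $W^{1,2}(M)$-closure of that generating set.
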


\begin{lemma}[Continuity from above]
\label{l:continuity_from_above}
Let $\Omega\subset M$, $\Sigma\subset \partial M$, $q$, $r$ and $b$ as in theorem \ref{t:subcrit}. Let $\Omega_k$ be a decreasing sequence of relatively open sets and let $\Sigma_k$ be a decreasing sequence of relatively closed sets such that
\begin{equation*}
    \begin{cases}
    \bigcap_k \Omega_k=\Omega\\
    \bigcap_k \Sigma_k=\Sigma
    \end{cases}.
\end{equation*}
\end{lemma}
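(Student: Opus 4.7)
The plan is to combine Lemma \ref{l:monotonicity} with a variational compactness argument, analogous to the one used in Theorem \ref{t:subcrit}. Since $\Omega_{k+1}\subset\Omega_k$, $\Sigma_{k+1}\subset\Sigma_k$, and also $\Omega\subset\Omega_k$, $\Sigma\subset\Sigma_k$, Lemma \ref{l:monotonicity} shows that the sequence $\{\Yam^{q,r}_b(\Omega_k,\Sigma_k)\}$ is non-decreasing in $k$ and bounded above by $\Yam^{q,r}_b(\Omega,\Sigma)$. Hence the limit $L:=\lim_k \Yam^{q,r}_b(\Omega_k,\Sigma_k)$ exists and satisfies $L\leq \Yam^{q,r}_b(\Omega,\Sigma)$, and it remains to establish the reverse inequality.

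For each $k$, pick a near-minimizer $\phi_k\in B^{q,r}_b(\Omega_k,\Sigma_k)$ with $E(\phi_k)\leq \Yam^{q,r}_b(\Omega_k,\Sigma_k)+\tfrac{1}{k}$. By Lemma \ref{t:generalities_y}(c) together with \eqref{FirstEscobarEstimate}, the uniform upper bound on $E(\phi_k)$ yields a uniform $W^{1,2}(M)$ bound on $\{\phi_k\}$. Using the subcriticality $q<2\bar{q}$ and $r<\bar{q}+1$, Banach--Alaoglu and the Rellich--Kondrachov and trace compactness theorems furnish a subsequence (unrelabeled) and a function $\phi\in W^{1,2}(M)$ such that $\phi_k\rightharpoonup\phi$ weakly in $W^{1,2}(M)$, strongly in $L^q(M)\cap L^2(M)\cap W^{1-\delta,2}(M)$, and $\Tr\phi_k\to\Tr\phi$ strongly in $L^r(\partial M)\cap L^2(\partial M)\cap W^{\frac12-\delta,2}(\partial M)$, as well as pointwise a.e. on $M$ and $\partial M$ along a further subsequence. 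Since $\phi_j\equiv 0$ on $M\setminus\Omega_k$ for $j\geq k$, the pointwise limit forces $\phi=0$ a.e. on $M\setminus\Omega_k$ for every $k$, hence a.e. on $M\setminus\Omega$; the analogous statement holds for $\Tr\phi$ on $\partial M\setminus\Sigma$. The strong convergence in $L^q$ and $L^r$ then gives the normalization $\|\phi\|^q_{L^q(\Omega)}+b\|\Tr\phi\|^r_{L^r(\Sigma)}=1$.

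The main obstacle is to show $\phi\in W^{1,2}(\Omega,\Sigma)$: vanishing off $\Omega\cup\Sigma$ in the a.e. sense is weaker than belonging to the closure that defines the space. As a first step, each $W^{1,2}(\Omega_k,\Sigma_k)$ is a closed linear subspace of $W^{1,2}(M)$, hence weakly closed, so $\phi\in\bigcap_k W^{1,2}(\Omega_k,\Sigma_k)$. Identifying this intersection with $W^{1,2}(\Omega,\Sigma)$ is where the real work lies: given defining sequences $\{\psi_{k,m}\}_m$ of $C^1$ functions supported in $\Omega_k\cup\Sigma_k$ with $\psi_{k,m}\to\phi_k$ in $W^{1,2}(M)$ as $m\to\infty$, one extracts by a diagonal procedure a sequence $\xi_n=\psi_{k_n,m_n}$ with $\xi_n\to\phi$ in $W^{1,2}(M)$ and $k_n\to\infty$, whose supports therefore shrink into $\bigcap_k(\Omega_k\cup\Sigma_k)=\Omega\cup\Sigma$. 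This exhibits $\phi$ as an element of $W^{1,2}(\Omega,\Sigma)$, so that $\phi$ is admissible for $\Yam^{q,r}_b(\Omega,\Sigma)$.

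To conclude, weak lower semicontinuity of the convex functional $\varphi\mapsto\|\nabla\varphi\|_{L^2(M)}^2$ handles the Dirichlet term, and Lemma \ref{t:generalities_y}(a) combined with the strong convergence of $\phi_k\to\phi$ in $W^{1-\delta,2}(M)$ and of $\Tr\phi_k\to\Tr\phi$ in $W^{\frac12-\delta,2}(\partial M)$ yields convergence of the two curvature terms, exactly as in the argument following items (ii) and (v) in the proof of Theorem \ref{t:subcrit}. Hence $E(\phi)\leq\liminf_k E(\phi_k)=L$, and combining with $\phi\in B^{q,r}_b(\Omega,\Sigma)$ gives $\Yam^{q,r}_b(\Omega,\Sigma)\leq E(\phi)\leq L$, which completes the proof of continuity from above.
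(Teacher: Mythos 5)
Your overall strategy --- monotonicity to get a monotone limit $L\leq \Yam^{q,r}_b(\Omega,\Sigma)$, then (near-)minimizers with a uniform $W^{1,2}$ bound from Lemma \ref{t:generalities_y}(c) and \eqref{FirstEscobarEstimate}, subcritical compactness, and weak lower semicontinuity plus the continuity of the curvature pairings --- is the same as the paper's (the paper uses exact minimizers from Theorem \ref{t:subcrit}, and it first disposes of the case where the limit is $+\infty$; your use of near-minimizers and omission of that trivial case are immaterial). The problem is the step you yourself single out as the crux: showing the limit $\phi$ is admissible, i.e.\ $\phi\in W^{1,2}(\Omega,\Sigma)$. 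Your diagonal argument does not work as written: to conclude $\xi_n=\psi_{k_n,m_n}\to\phi$ in the $W^{1,2}(M)$ \emph{norm} you would need $\phi_{k_n}\to\phi$ strongly in $W^{1,2}(M)$, but the compactness you invoked only yields weak $W^{1,2}$ convergence (strong convergence only in $L^q$, $L^2$, $W^{1-\delta,2}$ and the corresponding trace spaces). That first defect is repairable from your own weak-closedness observation: since $\phi\in W^{1,2}(\Omega_k,\Sigma_k)$ for every $k$, you may pick $\psi_k$ admissible for $(\Omega_k,\Sigma_k)$ with $\|\psi_k-\phi\|_{W^{1,2}(M)}<1/k$.

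The second defect is not repairable by this route. The approximants $\psi_k$ (or your $\xi_n$) vanish only outside $\Omega_k\cup\Sigma_k$, not outside $\Omega\cup\Sigma$, so none of them belongs to the class whose $W^{1,2}(M)$-closure defines $W^{1,2}(\Omega,\Sigma)$. Observing that their supports ``shrink into'' $\Omega\cup\Sigma$ and concluding membership is a non sequitur: it merely restates that $\phi\in\bigcap_k W^{1,2}(\Omega_k,\Sigma_k)$, and the inclusion $\bigcap_k W^{1,2}(\Omega_k,\Sigma_k)\subseteq W^{1,2}(\Omega,\Sigma)$ is exactly what has to be proved --- in the classical $H^1_0$ setting the analogous stability of the space under a decreasing sequence of open sets is a delicate, capacity-type matter and does not follow from support considerations alone. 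The paper obtains admissibility of the limit by a different device: along the subsequence it also extracts pointwise a.e.\ convergence $u_k\to u$ on $\Omega_1$ and $\Tr u_k\to\Tr u$ on $\Sigma_1$ (its items (iv)--(v)) and infers $u\in W^{1,2}(\Omega,\Sigma)$ from the a.e.\ vanishing of $u$ off $\Omega$ and of $\Tr u$ off $\Sigma$. Whatever one thinks of the brevity of that step, your substitute for it does not establish the claim, so your argument has a genuine gap precisely where the reverse inequality $\Yam^{q,r}_b(\Omega,\Sigma)\leq L$ is secured; everything before and after that point matches the paper's proof.
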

Then we have
\begin{equation}
    \lim_k \Yam^{q,r}_b(\Omega_k,\Sigma_k)=\Yam^{q,r}_b(\Omega,\Sigma).
\end{equation}
\begin{proof}
From monotonicity, $(\Omega_k\cup\Sigma_k)\supset(\Omega_{k+1}\cup\Sigma_{k+1})$ implies $\Yam^{q,r}_b(\Omega_k, \Sigma_k)\leq \Yam^{q,r}_b(\Omega_{k+1}, \Sigma_{k+1})$, so the sequence of the $\Yam_k:=\Yam^{q,r}_b(\Omega_k,\Sigma_k)$ is non-decreasing. In addition, again by monotonicity, $\Yam:=\Yam^{q,r}_b(\Omega,\Sigma)\geq \Yam_k$, for all $k$. Hence, we can define $\Lambda=\displaystyle\lim_k \Yam_k\geq \Yam_k$, for all $k$. We also have $\Yam\geq \Lambda$.

If $\Lambda=+\infty$, $\Yam=+\infty$ as well and the result holds automatically.

On the other hand, assume $\Lambda$, and thus the $\Yam_k$, are all finite. So, by Theorem \ref{t:subcrit}, for each $k$ there is $u_k\in B^{q,r}_b(\Omega_k,\Sigma_k)$ satisfying $E(u_k)=\Yam_k\leq \Lambda$. Then, by lemmata \ref{t:generalities_y} and \ref{l:estimates_L2norms}, there is $K>0$ such that $\|u_k\|_{W^{1,2}(M)}\leq K,\forall k$. Now, all $u_k$ are in $W^{1,2}(\Omega_1,\Sigma_1)$, implying that the sequence of the $u_k$ is bounded in $W^{1,2}(\Omega_1)$ and hence, in an argument similar to the one in theorem \ref{t:subcrit}, there is $u\in W^{1,2}(\Omega_1,\Sigma_1)$ such that:
\begin{enumerate}[(i)]
    \item $u_k\rightharpoonup u$ in $W^{1,2}(\Omega_1,\Sigma_1)$,
    \item $u_k\to u$ in $L^q(\Omega_1)$,
    \item $\Tr u_k\to \Tr u$ in $L^r(\Sigma_1)$,
    \item $u_k\to u$ a.e. in $\Omega_1$,
    \item $\Tr u_k\to \Tr u$ a.e. in $\Sigma_1$.
\end{enumerate}

By (iv) and (v), $u\in W^{1,2}(\Omega,\Sigma)$. In fact, by (ii) and (iii), $u\in B^{q,r}_b(\Omega,\Sigma)$. So $E(u)\geq \Yam$. But, just as in the proof of theorem \ref{t:subcrit}, we have $E(u)\leq \liminf E(u_k)=\Lambda$. Thus we conclude that $\Lambda=\Yam$.
\end{proof}

As a remark, the result is not true if we drop the hypothesis on the indices being subcritical, as explained after the proof of Lemma 3.14 in \cite{DM15}.

\section{Eigenvalue of the Laplacian}
\label{sec:eigenvalue}

To study the more general problem of prescribing scalar curvature and mean extrinsic curvature we try to adapt the strategy of the paper \cite{DM15} to the case of manifolds with boundary. In order to do so we have to study a specific instance of $\Yam^{q,r}_b(\Omega, \Sigma)$ not covered by the results in the previous section, with $q=r=2$, which corresponds to the first eigenvalue of the Laplacian relative to the pair $(\Omega,\Sigma)$.

First we define the Raileigh quotient for functions in $W^{1,2}(M,\partial M)$ as

\begin{equation}
    \label{eq:rayleigh}
    Q_g(\varphi)=\frac{E(\varphi)}{||\varphi||_{L^2(\Omega)}^2+||\gamma\varphi||^2_{L^2(\Sigma)}}
\end{equation}
and the first eigenvalue of the Laplacian for the set $\Omega\cup\Sigma$ as
\begin{equation}
    \label{eq:eigenvalue_laplacian}
    \lambda^{q,r}_b(\Omega,\Sigma):=\inf_{B^{q,r}_{b}(\Omega,\Sigma)}Q_g(\varphi)
\end{equation}

Notice that, since $Q_g(k\varphi)=Q_g(\varphi)$, for all $k>0$, Lemma \ref{l:polynomial} implies that $\lambda^{q,r}_b(\Omega,\Sigma)$ does not really depend on $q$, $r$ or $b$, so we can drop the indices.

\begin{proposition}
\label{c:minimizer_eigenvalue_Laplacian}
If $(M, g),\ \Omega,\ \Sigma$ are as before, there is $u\in W^{1,2}(\Omega,\Sigma)$ such that
$$
\lambda(\Omega,\Sigma)=Q_g(u).
$$
\begin{proof}
We want to be able to simply take $q=r=2$, $b=1$ in theorem \ref{t:subcrit}. Checking the proof, $q>r$ is a relevant hypothesis just to show that a minimizing sequence $\{\phi_i\}_i\subset B_1^{2,2}(\Omega, \Sigma)$ for $E(\phi_i)\to \lambda(\Omega, \Sigma)$ will be bounded in $W^{1,2}(\Omega, \Sigma)$. Now, if $\phi_i\in B_1^{2,2}(\Omega,\Sigma)$, $\|\phi_i\|_{L^2(\Omega)}\leq 1$. Also, Proposition \ref{t:generalities_y} (a) does not require $q>r$, so for any $\epsilon>0$, there is $ K_\epsilon>0$ such that
$$
\begin{array}{ccl}
    \int_{\Omega} |\nabla \phi_i|^2dV_g &=& E(\phi_i)-\frac{n-2}{4(n-1)}\int_\Omega R\phi_i^2dV_g-\frac{n-2}{2}\int_\Sigma H(\gamma\phi_i)^2d\sigma_g  \\
     & \leq& E(\phi_i)+\epsilon \|\phi_i\|^2_{W^{1,2}(\Omega)}+K_\epsilon\|\phi_i\|^2_{L^2(\Omega)}+\epsilon\|\Tr\phi_i\|^2_{W^{\frac{1}{2}, 2}(\Sigma)}+K_{\epsilon}\|\Tr\phi_i\|^2_{L^2(\Sigma)}\\
     &\leq& E(\phi_i)+\epsilon(1+C)\int_\Omega |\nabla\phi_i|^2dV_g+\epsilon+\epsilon C+2K_\epsilon,
\end{array}
$$
with $C$ the constant associated to the trace inequality. So, if we choose $\epsilon$ small enough, for any $i$
\begin{equation}
    \|\phi_i\|^2_{W^{1,2}(\Omega)}\leq \frac{1}{1-\epsilon(1+C)}\left(E(\phi_i)+\epsilon(1+C)+2K_\epsilon\right)+1
\end{equation}
and the proof follows as in theorem \ref{t:subcrit}.
\end{proof}
\end{proposition}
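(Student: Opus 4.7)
The plan is to mimic the proof of Theorem~\ref{t:subcrit} applied to the indices $q=r=2$ and $b=1$. Since $Q_g(k\varphi)=Q_g(\varphi)$ for $k>0$, Lemma~\ref{l:polynomial} lets us restrict the infimum in \eqref{eq:eigenvalue_laplacian} to $B^{2,2}_1(\Omega,\Sigma)$, where the denominator of $Q_g$ equals $1$; minimizing $Q_g$ then reduces to minimizing $E$ over that constraint set. Pick a minimizing sequence $\{\phi_i\}\subset B^{2,2}_1(\Omega,\Sigma)$ with $E(\phi_i)\to\lambda(\Omega,\Sigma)$; since $E(\varphi)=E(|\varphi|)$ we may assume $\phi_i\geq 0$.

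The essential obstacle is that Lemma~\ref{t:generalities_y}(c) is not available for $q=r=2$, so I cannot directly bound $\|\nabla\phi_i\|_{L^2(\Omega)}$ by $E(\phi_i)$. I will bypass this by observing that the constraint $\phi_i\in B^{2,2}_1(\Omega,\Sigma)$ \textit{already} supplies $\|\phi_i\|_{L^2(\Omega)}\leq 1$ and $\|\Tr\phi_i\|_{L^2(\Sigma)}\leq 1$ with no interpolation needed; so Lemma~\ref{l:estimates_L2norms} is superfluous here. Starting from
\[
\|\nabla\phi_i\|_{L^2(\Omega)}^2 = E(\phi_i) - \tfrac{n-2}{4(n-1)}\int_\Omega R\phi_i^2\,dV_g - \tfrac{n-2}{2}\int_\Sigma H(\Tr\phi_i)^2\,d\sigma_g,
\]
I apply the two estimates in Lemma~\ref{t:generalities_y}(a), combined with the trace inequality $\|\Tr\phi_i\|_{W^{1/2,2}(\Sigma)}^2\leq C\|\phi_i\|_{W^{1,2}(\Omega)}^2$, to bound the curvature terms by $\epsilon(1+C)\|\phi_i\|_{W^{1,2}(\Omega)}^2$ plus absolute constants depending on $\epsilon$. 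Since $\|\phi_i\|_{W^{1,2}(\Omega)}^2=\|\nabla\phi_i\|_{L^2(\Omega)}^2+\|\phi_i\|_{L^2(\Omega)}^2\leq \|\nabla\phi_i\|_{L^2(\Omega)}^2+1$, I choose $\epsilon$ small enough to absorb the $\|\nabla\phi_i\|_{L^2(\Omega)}^2$ term into the left-hand side and obtain a uniform bound on $\|\phi_i\|_{W^{1,2}(\Omega)}$.

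With this bound in hand, reflexivity of $W^{1,2}(\Omega)$, Rellich compactness $W^{1,2}(\Omega)\hookrightarrow\hookrightarrow L^2(\Omega)$, and compactness of the trace map $W^{1,2}(\Omega)\hookrightarrow\hookrightarrow L^2(\Sigma)$ (plus, as in Theorem~\ref{t:subcrit}, the embeddings into $W^{1-\delta,2}(\Omega)$ and $W^{1/2-\delta,2}(\Sigma)$ for small $\delta>0$) yield, passing to a subsequence, a limit $u\in W^{1,2}(\Omega,\Sigma)$ with $\phi_i\rightharpoonup u$ in $W^{1,2}(\Omega)$ and $\phi_i\to u$, $\Tr\phi_i\to\Tr u$ strongly in the relevant spaces. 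Strong $L^2$ convergence on $\Omega$ and $\Sigma$ forces $\|u\|_{L^2(\Omega)}^2+\|\Tr u\|_{L^2(\Sigma)}^2=1$, so $u\in B^{2,2}_1(\Omega,\Sigma)$.

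To conclude, weak lower semicontinuity of $\|\nabla\cdot\|_{L^2}^2$ gives $\|\nabla u\|_{L^2(\Omega)}^2\leq\liminf\|\nabla\phi_i\|_{L^2(\Omega)}^2$, while the continuity of the maps $\varphi\mapsto\langle R,\varphi^2\rangle_\Omega$ and $\varphi\mapsto\langle H,(\Tr\varphi)^2\rangle_\Sigma$ on $W^{1-\delta,2}(\Omega)$ and $W^{1/2-\delta,2}(\Sigma)$ respectively (established inside the proof of Lemma~\ref{t:generalities_y}(a)) ensures that the curvature terms pass to the limit. Therefore $E(u)\leq\liminf E(\phi_i)=\lambda(\Omega,\Sigma)$, and since $u\in B^{2,2}_1(\Omega,\Sigma)$ we also have $E(u)\geq\lambda(\Omega,\Sigma)$, yielding $Q_g(u)=E(u)=\lambda(\Omega,\Sigma)$.
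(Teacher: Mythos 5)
Your proposal is correct and follows essentially the same route as the paper: it exploits the fact that the constraint $B^{2,2}_1(\Omega,\Sigma)$ directly bounds $\|\phi_i\|_{L^2(\Omega)}$ and $\|\Tr\phi_i\|_{L^2(\Sigma)}$ (so Lemma \ref{l:estimates_L2norms} is not needed), uses Lemma \ref{t:generalities_y}(a) together with the trace inequality to absorb the curvature terms and obtain a uniform $W^{1,2}$ bound, and then repeats the compactness and lower-semicontinuity argument of Theorem \ref{t:subcrit}. The only difference is that you spell out the final weak-convergence steps which the paper dispatches with ``the proof follows as in Theorem \ref{t:subcrit}.''
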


Instead of dealing with the relative Yamabe invariant of a set we want to work with its linear counterpart, the first eigenvalue of the Laplacian. For our purposes, we only need $\lambda(\Omega, \Sigma)$ to coincide with $\mathcal{Y}^{q,r}_b(\Omega,\Sigma)$ in sign. In views of Theorem \ref{t:yclass}, it is enough to prove that the signs agree for $b=1$.

\begin{theorem}
\label{t:equal_signs}
If $q>r$, we have
$$
\sig(\lambda(\Omega,\Sigma))=\sig(\mathcal{Y}^{q,r}_1(\Omega,\Sigma)).
$$
\begin{proof} Let us omit the sets to simplify notation.

i. $\mathcal{Y}^{q,r}_1<0$ if, and only if, there is $\varphi\in B^{q,r}_1(\Omega, \Sigma)$ such that $E(\varphi)<0$, which is itself true if, and only if, $\lambda<0$.

ii. Assume $\mathcal{Y}^{q,r}_1=\epsilon>0$. Since $r,q\geq 2$, there is $C_{q,r}>0$ such that $||\varphi||^2_{L^2(\Omega)}\leq C_{q,r}||\varphi||^q_{L^q(\Omega)}$ and $||\gamma\varphi||^2_{L^2(\Sigma)}\leq C_{q,r}||\gamma\varphi||^r_{L^r(\Sigma)},\forall \varphi\in B^{q,r}_1(\Omega,\Sigma)$. So, if $\varphi\in B^{q,r}_1(\Omega,\Sigma)$:
$$
||\varphi||^2_{L^2(\Omega)}+||\gamma\varphi||^2_{L^2(\Sigma)}\leq C_{q,r}.1\Rightarrow \frac{E(\varphi)}{||\varphi||^2_{L^2(\Omega)}+||\gamma\varphi||^2_{L^2(\Sigma)}}\geq \frac{E(\varphi)}{C_{q,r}},
$$
and $\lambda\geq \frac{\mathcal{Y}^{q,r}_1}{C_{q,r}}=\frac{\epsilon}{C_{q,r}}>0$.

iii. Assume $\lambda>0$ and $\mathcal{Y}^{q,r}_1\leq 0$, hence $\mathcal{Y}_0^{q,r}\leq 0$. Let $\varphi_k$ be a minimizing sequence for $\mathcal{Y}^{q,r}_0$. Then we can assume, up to a subsequence, $E(\varphi_k)<\frac{1}{k},\forall k$.

So we have that, since $\lambda>0$:
\begin{equation}
\label{eq:norm_to_zero}
E(\varphi_k)=Q_g(\varphi_k)\left(||\varphi_k||^2_{L^2(\Omega)}+||\gamma\varphi_k||^2_{L^2(\Sigma)}\right)<\frac{1}{k}\Rightarrow \lim_{k\to\infty}\left(||\varphi_k||^2_{L^2(\Omega)}+||\gamma\varphi_k||^2_{L^2(\Sigma)}\right)=0.    
\end{equation}

Also
\begin{equation}
\label{eq:energy_bound}
    E(\varphi_k)\leq \frac{1}{k}\Rightarrow ||\nabla\varphi_k||^2_{L^2(\Omega)}\leq \frac{1}{k}-\frac{n-2}{4(n-1)}\int_\Omega R \varphi_k^2dV_g-\frac{n-2}{2}\int_\Sigma H (\gamma \varphi_k)^2d\sigma_g
\end{equation}
and $||\varphi_k||_{L^q(\Omega)}= 1,\ q\leq 2\bar q$. As a consequence, there is $K_q>0$ such that $||\varphi_k||^2_{W^{1,2}(\Omega)}\geq \frac{1}{K_q}$ and, by (\ref{eq:norm_to_zero})
\begin{equation}\label{eq:lim_H_norm}
    \lim_{k\to\infty}||\nabla\varphi_k||^2_{L^2(\Omega)}\geq \frac{1}{K_q}.
\end{equation}
Turning back to equations (\ref{eq:R_estimate_L2}) and (\ref{eq:H_estimate_L2})
we see that, given $\epsilon>0$, there exists $K_\epsilon>0$ such that
\begin{equation}
\label{eq:first_estimate_integral_curvatures}
    \begin{cases}
-\frac{n-2}{4(n-1)}\int_\Omega R \varphi^2dV_g\leq \frac{n-2}{4(n-1)}\epsilon ||\nabla\varphi_k||^2_{L^2(\Omega)}+\frac{n-2}{4(n-1)}K_\epsilon ||\varphi_k||^2_{L^2(\Omega)}\\
-\frac{n-2}{2}\int_\Sigma H (\gamma\varphi_k)^2d\sigma_g\leq \frac{n-2}{2}\epsilon ||\nabla\varphi_k||^2_{L^2(\Omega)}+\frac{n-2}{2}K_\epsilon ||\gamma\varphi_k||^2_{L^2(\Sigma)}.
\end{cases}
\end{equation}
But also, by (c) in lemma \ref{t:generalities_y}
$$
||\nabla\varphi_k||^2_{L^2(\Omega)}\leq KE(\varphi_k)+C.
$$
Plugging that into (\ref{eq:first_estimate_integral_curvatures}) we get
$$
\begin{cases}
-\frac{n-2}{4(n-1)}\int_\Omega R \varphi^2dV_g\leq \frac{n-2}{4(n-1)}\epsilon \left(KE(\varphi_k)+C\right)+\frac{n-2}{4(n-1)}K_\epsilon ||\varphi_k||^2_{L^2(\Omega)}\\
-\frac{n-2}{2}\int_\Sigma H (\gamma\varphi_k)^2d\sigma_g\leq \frac{n-2}{2}\epsilon \left(KE(\varphi_k)+C\right)+\frac{n-2}{2}K_\epsilon ||\gamma\varphi_k||^2_{L^2(\Sigma)}
\end{cases}
$$
and if we take the limit in (\ref{eq:energy_bound}) and use the result (\ref{eq:norm_to_zero}) we end up finding that there is a constant $L=(KE(\varphi_k)+C)\left(\frac{n-2}{4(n-1)}+\frac{n-2}{2}\right)$ such that
$$
\lim_{k\to\infty}||\nabla\varphi_k||^2_{L^2(\Omega)}\leq \epsilon L,
$$
for all $\epsilon>0$. Contradiction.

iv. Gathering the results i, ii and iii we find that $\mathcal{Y}^{q,r}_1=0$ if, and only if, $\lambda=0$.
\end{proof}
\end{theorem}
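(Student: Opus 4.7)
The plan is to compare the signs of the two quantities case by case (negative, positive, zero), with the zero case falling out of the other two by exclusion. Since the denominator of $Q_g$ is strictly positive on $B^{q,r}_1(\Omega,\Sigma)$, we have $\sig E(\varphi)=\sig Q_g(\varphi)$ for every admissible $\varphi$. Hence $\mathcal{Y}^{q,r}_1<0$ if and only if some $\varphi\in B^{q,r}_1$ has $E(\varphi)<0$, if and only if $\lambda<0$. For the implication $\mathcal{Y}^{q,r}_1>0\Rightarrow\lambda>0$, observe that every $\varphi\in B^{q,r}_1$ satisfies $\|\varphi\|_{L^q(\Omega)}\leq 1$ and $\|\gamma\varphi\|_{L^r(\Sigma)}\leq 1$, so H\"older's inequality on the compact sets $\Omega$ and $\Sigma$ produces a constant $C_{q,r}$ independent of $\varphi$ with $\|\varphi\|_{L^2(\Omega)}^2+\|\gamma\varphi\|_{L^2(\Sigma)}^2\leq C_{q,r}$. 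Therefore $Q_g(\varphi)\geq E(\varphi)/C_{q,r}\geq \mathcal{Y}^{q,r}_1/C_{q,r}$, so $\lambda\geq \mathcal{Y}^{q,r}_1/C_{q,r}>0$.

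The remaining implication $\lambda>0\Rightarrow \mathcal{Y}^{q,r}_1>0$ is the heart of the theorem, and I would prove it by contradiction. If $\lambda>0$ but $\mathcal{Y}^{q,r}_1\leq 0$, the negative case is excluded by the preceding paragraph, so $\mathcal{Y}^{q,r}_1=0$; by the $b$-independence of the sign established in Theorem~\ref{t:yclass}, we also have $\mathcal{Y}^{q,r}_0=0$. Pick a minimizing sequence $\{\varphi_k\}\subset B^{q,r}_0(\Omega,\Sigma)$ with $0\leq E(\varphi_k)<1/k$ (nonnegativity follows from $\lambda>0$). From $E(\varphi_k)\geq \lambda(\|\varphi_k\|_{L^2(\Omega)}^2+\|\gamma\varphi_k\|_{L^2(\Sigma)}^2)$ we get $\|\varphi_k\|_{L^2(\Omega)}\to 0$ and $\|\gamma\varphi_k\|_{L^2(\Sigma)}\to 0$. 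On the other hand the continuous Sobolev embedding $W^{1,2}(\Omega)\hookrightarrow L^q(\Omega)$ (valid up to $q=2\bar q$) combined with $\|\varphi_k\|_{L^q(\Omega)}=1$ yields $\|\varphi_k\|_{W^{1,2}(\Omega)}^2\geq 1/K_q$, so that for large $k$ we have $\|\nabla \varphi_k\|_{L^2(\Omega)}^2\geq 1/(2K_q)$.

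To close the argument I would squeeze the curvature integrals to zero using Lemma~\ref{t:generalities_y}. Part (c) of that lemma provides $\|\nabla\varphi_k\|_{L^2(\Omega)}^2\leq KE(\varphi_k)+C$, which keeps $\|\varphi_k\|_{W^{1,2}(\Omega)}$ (and hence $\|\gamma\varphi_k\|_{W^{\frac12,2}(\Sigma)}$ via the trace inequality) uniformly bounded. Part (a) then gives, for every $\epsilon>0$, a constant $K_\epsilon$ with
\[
\left|\int_\Omega R\varphi_k^2\,dV_g\right|+\left|\int_\Sigma H(\gamma\varphi_k)^2\,d\sigma_g\right|\leq \epsilon\bigl(\|\varphi_k\|_{W^{1,2}(\Omega)}^2+\|\gamma\varphi_k\|_{W^{\frac12,2}(\Sigma)}^2\bigr)+K_\epsilon\bigl(\|\varphi_k\|_{L^2(\Omega)}^2+\|\gamma\varphi_k\|_{L^2(\Sigma)}^2\bigr).
\]
Sending $k\to\infty$ kills the $K_\epsilon$-terms, while the $\epsilon$-terms contribute only $O(\epsilon)$; rearranging the definition of $E$ then yields $\limsup_k\|\nabla\varphi_k\|_{L^2(\Omega)}^2\leq C'\epsilon$ for every $\epsilon>0$, contradicting the positive lower bound obtained above. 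The main obstacle is precisely this juggling of three pieces of information — a Sobolev lower bound on the gradient, an $E$-bound furnishing a uniform $W^{1,2}$ ceiling, and the vanishing of the $L^2$ norms — arranged so that the free $\epsilon$ in Lemma~\ref{t:generalities_y}(a) can swallow the one piece that does not go to zero. Once the positive and negative cases are settled, the zero case follows by exclusion.
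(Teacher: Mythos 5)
Your proposal is correct and follows essentially the same route as the paper: the negative and positive cases are handled by the positivity of the Rayleigh denominator and a H\"older bound on $B^{q,r}_1$, and the implication $\lambda>0\Rightarrow\mathcal{Y}^{q,r}_1>0$ is proved by the same contradiction argument, passing to $b=0$ via the sign-independence of Theorem~\ref{t:yclass}, extracting a minimizing sequence whose $L^2$ and trace norms vanish while Sobolev embedding keeps the gradient bounded below, and then using Lemma~\ref{t:generalities_y}(a) and (c) to force the gradient to vanish in the limit. No gaps; the argument matches the paper's in all essential steps.
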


\section{The Prescribed Scalar Curvature and Mean Curvature Problem}
\label{sec:prescribed}

Let's define the following functional over $W^{1,2}(M,\partial M)$
\begin{equation}
\label{eq:functional_prescribed_curvatures}
    F_{q,r}(u)=E(u)-\int_M \frac{n-2}{2q(n-1)} R'|u|^{q}dV_g-\int_{\partial M} \frac{n-2}{r} H'|\gamma u|^rd\sigma_g
\end{equation}
and prove some of its important properties. But first we need a technical lemma.
\begin{lemma}
\label{l:unmatched_exponents}
Let $A,B,C>0$ be real constants, $x,y,z\geq 0$ be positive numbers and assume
\begin{equation}
    y+z>Cx.
\end{equation}
If $q,r,p$ are exponents satisfying $q\geq q_0>1$, $r\geq r_0>1$ and $r_0,q_0>p>1$, there is $J(q_0, r_0, p)>0$ such that if $x\geq L$
$$
f(y,z)=Ay^q+Bz^r\geq x^p.
$$
\begin{proof}
If $Ay^q\geq x^p$, there is nothing to do. So assume $y<A^{-\frac{1}{q}}x^{\frac{p}{q}}$. Then
\begin{equation}
    z-B^{-\frac{1}{r}}x^{\frac{p}{r}}>Cx-y-B^{-\frac{1}{r}}x^{\frac{p}{r}}>Cx-A^{-\frac{1}{q}}x^{\frac{p}{q}}-B^{-\frac{1}{r}}x^{\frac{p}{r}}
\end{equation}
Since $\frac{p}{q}, \frac{p}{r}<1$, there is $L>0$ such that if $x\geq L$ then
\begin{equation}
    Cx-A^{-\frac{1}{q}}x^{\frac{p}{q}}-B^{-\frac{1}{r}}x^{\frac{p}{r}}>0
\end{equation}
and hence $z>B^{-\frac{1}{r}}x^{\frac{p}{r}}$ and $Bz^r>x^p$.
\end{proof}
\end{lemma}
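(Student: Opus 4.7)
The plan is a two-case dichotomy on whether the first summand $Ay^q$ already dominates $x^p$. If $Ay^q \geq x^p$, then $f(y,z) \geq Ay^q \geq x^p$ and there is nothing to prove (and no threshold on $x$ is needed). Otherwise $y < A^{-1/q} x^{p/q}$, and the hypothesis $y + z > Cx$ forces
\begin{equation*}
z > Cx - A^{-1/q} x^{p/q}.
\end{equation*}
The desired conclusion $Bz^r \geq x^p$ is equivalent to $z \geq B^{-1/r} x^{p/r}$, so in this second case it suffices to establish
\begin{equation*}
Cx - A^{-1/q} x^{p/q} \geq B^{-1/r} x^{p/r}
\end{equation*}
for all $x$ beyond some explicit threshold.

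To make that threshold uniform in the admissible exponents $q \geq q_0$ and $r \geq r_0$, I would exploit $p/q \leq p/q_0 < 1$ and $p/r \leq p/r_0 < 1$: for $x \geq 1$ this gives $x^{p/q} \leq x^{p/q_0}$ and $x^{p/r} \leq x^{p/r_0}$. A similar monotonicity observation, split by whether $A, B$ are $\geq 1$ or $< 1$, bounds the prefactors $A^{-1/q}$ and $B^{-1/r}$ by constants $\tilde C_1, \tilde C_2$ depending only on $A, B, q_0, r_0$. The inequality then reduces to
\begin{equation*}
Cx - \tilde C_1\, x^{p/q_0} - \tilde C_2\, x^{p/r_0} > 0,
\end{equation*}
whose left-hand side is a linear term minus two strictly sublinear terms, hence positive for $x$ above a crossover $L = L(A, B, C, q_0, r_0, p)$ that can be extracted explicitly (e.g.\ by equating $Cx/2$ with each sublinear term in turn).

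The content of the lemma is essentially bookkeeping, and the only genuine subtlety is the uniformity of $L$ in $q$ and $r$, which is exactly why the hypotheses are phrased in terms of the lower bounds $q_0, r_0$ rather than $q, r$ themselves. I expect no real obstacle beyond ensuring that the reduction in the previous paragraph is carried out so that $\tilde C_1, \tilde C_2$ truly do not depend on $q$ or $r$.
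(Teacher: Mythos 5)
Your proposal is correct and follows essentially the same route as the paper: the same dichotomy on whether $Ay^q\geq x^p$, the same use of $y+z>Cx$ to bound $z$ from below, and the same observation that a linear term dominates two sublinear powers for large $x$. Your explicit handling of uniformity in $q\geq q_0$, $r\geq r_0$ (bounding $x^{p/q}$ by $x^{p/q_0}$ for $x\geq 1$ and controlling $A^{-1/q}$, $B^{-1/r}$ by constants depending only on $A,B,q_0,r_0$) is in fact more careful than the paper's proof, which asserts the existence of $L$ without spelling out its independence of $q$ and $r$.
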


For simplicity, for the rest of this section, we will use the notation
$$
\|u\|^2_{L^2(M,\partial M)}=\|u\|^2_{L^2(M)}+\|\gamma u\|^2_{L^2(\partial M)},
$$
which defines a norm in $W^{1,2}(M)$.

Also, we will denote $Z:=\{s\in M| R'(s)=0\}$ and $Z_\partial=\{p\in \partial M| H'(p)=0\}$ the zero sets of $R'$ and $H'$ respectively.

Finally, we can prove our first important result.

\begin{lemma}[Coercivity]
\label{l:coercivity}
Let  $q_0,$ $r_0$ be subcritical (as in theorem \ref{t:subcrit}), $R'\leq 0$, $H'\leq 0$.

If $(Z,Z_\partial)$ is Yamabe positive (i.e. $\mathcal{Y}^{p,q}_b(Z, Z_\delta)>0$),  $\forall B\in\mathbb{R},\  \exists K(q_0,r_0,B)>0$ such that if  $q,$ $r$ are subcritical, $q\geq q_0,\ r\geq r_0,\ q>r$, $u\in W^{1,2}(M,\partial M)$ and $||u||_{L^2(M,\partial M)}\geq K$, then $F_{q,r} (u)\geq B$.
\begin{proof}
Adapting the proof of proposition 4.5 of \cite{DM15}.

For each $\epsilon>0$, let's define
\begin{equation}
\begin{array}{ccl}
    A_\epsilon &=&\left\{u\in W^{1,2}(M, \partial M)\left| \int_M |R'|u^2dV_g+\int_{\partial M}|H'|(\Tr u)^2d\sigma_g\right.\right.\\
     & &\left.\left.\leq \epsilon ||u||^2_{L^2(M,\partial M)}\left(\int_M|R'|dV_g+\int_{\partial M}H'd\sigma_g \right)\right.\right\}.
\end{array}
\end{equation}
Now, since $(Z, Z_\partial)$ is Yamabe positive, $\lambda(Z, Z_\partial)>0$. So we can fix $L\in(0, \lambda(Z, Z_\partial))$.

\textbf{Claim:} There is $\epsilon_0<1$ such that if $u\in A_{\epsilon_0}$
\begin{equation}
\label{eq:L_condition}
    E(u)\geq L||u||^2_{L^2(M,\partial M)}.
\end{equation}
Assume that is not true. So if $\epsilon_k$ is a sequence such that $\epsilon_k\to 0$, we can choose $v_k\in A_{\epsilon_k}$, $||v_k||_{L^2(M, \partial M)}=1$ violating equation (\ref{eq:L_condition}):
\begin{equation}
    E(v_k)<L,\forall k.
\end{equation}
Putting this together with item 3 of lemma \ref{t:generalities_y} we have that $\{v_k\}$ is a bounded sequence in $W^{1,2}(M)$. So, just as in the proof of theorem \ref{t:subcrit}, there is $v\in W^{1,2}(M)$ such that $v_k\rightharpoonup v$ in $W^{1,2}(M)$ and $v_k\to v$ in $L^2(M,\partial M)$, and hence $||v||_{L^2(M,\partial M)}=1$.

Also, as in the proof of theorem \ref{t:subcrit}, changing $R$ by $R'$,  we have that
\begin{equation}
\int_M |R'|v_k^2 dV_g\to \int_M |R'|v^2 dV_g    
\end{equation}
and
\begin{equation}
\int_{\partial M} |H'|(\Tr v_k)^2 d\sigma_g\to \int_{\partial M} |H'|(\Tr v)^2 d\sigma_g
\end{equation}

But, by choice of the $v_k$'s
\begin{equation}
    0\leq \int_M |R'|v_k^2dV_g+\int_{\partial M}|H'|(\gamma v_k)^2d\sigma_g\leq \epsilon_k ||v_k||^2_{L^2(M,\partial M)}\left(\int_M|R'|dV_g+\int_{\partial M}|H'|d\sigma_g\right)\to 0
\end{equation}
So
\begin{equation}
    \int_M |R'|v^2dV_g+\int_{\partial M}|H'|(\Tr v)^2d\sigma_g=0
\end{equation}
So $v\equiv 0$ outside $Z\cup Z_\partial$ and $v\in W^{1,2}(Z, Z_\partial)$. But, since $v_k\rightharpoonup v$ in $W^{1,2}$, $v_k\to v$ in $L^2$ and from the last equality
$$
E(v)\leq E(v_k)\leq L< \lambda(Z, \emptyset).
$$
Contradiction.

Now we divide the proof of the theorem itself in two cases, taking $\epsilon_0$ as in the claim:

\textbf{Case 1: ($u\not\in A_{\epsilon_0}$)} If $u\not\in A_{\epsilon_0}$
\begin{equation}
\label{eq:Condition_case_1}
    \int_M |R'|u^2dV_g+\int_{\partial M}|H'|(\Tr u)^2d\sigma_g> \epsilon_0 ||u||^2_{L^2(M,\partial M)}\left(\int_M|R'|dV_g+\int_{\partial M}|H'|d\sigma_g\right)
\end{equation}
Now, if $R', H'\leq 0$
\begin{equation}
\label{eq:Fqr_with_negative_curvature}
    F_{q,r}(u)=E(u)+\int_M \frac{n-2}{2q(n-1)}|R'||u|^qdV_g+\int_{\partial M}\frac{n-2}{r}|H'||\gamma u|^rd\sigma_g
\end{equation}
Now, using H\"older inequality on the higher power terms
\begin{equation}
\label{eq:estimate_R'_q}
    \int_M |R'||u|^2dV_g=\int_M |R'|^{1-\frac{2}{q}}|R'|^{\frac{2}{q}}|u|^2dV_g\leq \left[\int_M|R'|dV_g\right]^{1-\frac{2}{q}}\left[\int_M |R'||u|^q dV_g\right]^{\frac{2}{q}}
\end{equation}
and similarly
\begin{equation}
\label{eq:estimate_H'_r}    
    \int_{\partial M}|H'||\Tr u|^2d\sigma_g\leq \left[\int_{\partial M}|H'|d\sigma_g\right]^{1-\frac{2}{r}}\left[\int_{\partial M}|H'||\gamma u|^rd\sigma_g\right]^{\frac{2}{r}}
\end{equation}

On the other hand, from lemma \ref{t:generalities_y} (a) and (b), given $\eta>0$, there are $D_1(\eta), D_2(\eta)$, independent of $q$ or $r$ such that
\begin{equation*}
\begin{array}{ccl}
    E(u) & = &\int_M |\nabla u|^2dV_g+\frac{n-2}{4(n-1)}\int_M Ru^2dV_g+\frac{n-2}{2}\int_{\partial M} H(\Tr u)^2d\sigma_g \\
     & \geq & \int_M|\nabla u|^2dV_g-\frac{n-2}{4(n-1)}\left(\eta \|u\|^2_{W^{1,2}(M)}+D_1\|u\|^2_{L^2(M)}\right)\\
     &&-\frac{n-2}{2}\left( \eta \|\Tr u\|^2_{W^{1,2}(\partial M)}+D_2\|\Tr u\|^2_{L^2(\partial M)}\right)\\
     &\geq& \left( 1-\left(\frac{n-2}{4(n-1)}-\frac{n-2}{2}D\right)\eta \right)\int_M|\nabla u|^2dV_g-\left(\frac{n-2}{4(n-1)}(\eta+D_1)+\frac{n-2}{2}(\eta D)\right)\|u\|^2_{L^2(M)}\\
     &&-\frac{n-2}{2}D_2\|\Tr u\|^2_{L^2(\partial M)}
\end{array}
\end{equation*}
with $D$ the constant associated to the trace inequality. By choosing $\eta$ small enough that the coefficient of $\int_M\|\nabla u\|^2dV_g$ is positive and $D_3$ the largest value between $\frac{n-2}{4(n-1)}(\eta+D_1)+\frac{n-2}{2}\eta D$ and $\frac{n-2}{2}D_2$ we have
\begin{equation}
    E(u)\geq -D_3\|u\|^2_{L^2(M,\partial M)}.
\end{equation}
Plugging this together with equations (\ref{eq:estimate_R'_q}) and (\ref{eq:estimate_H'_r}) in (\ref{eq:Fqr_with_negative_curvature}) we have
\begin{equation}
    F_{q,r}(u)\geq -D_3\|u\|^2_{L^2(M,\partial M)}+A_1(q)\left[\int_M |R'||u|^2dV_g\right]^{\frac{q}{2}}+A_2(r)\left[\int_{\partial M}|H'||\Tr u|^2d\sigma_g\right]^{\frac{r}{2}}
\end{equation}
with $A_1(q)=\frac{n-2}{2q(n-1)}\left[\int_{M}|R'|dV_g\right]^{1-\frac{q}{2}}$ and $A_2(r)=\frac{n-2}{r}\left[\int_{\partial M}|H'|d\sigma_g\right]^{1-\frac{r}{2}}$ positive constants that depend on $q$ and $r$ respectively. Since both $A_1(q)$ and $A_2(r)$ are continuous on $q$ and $r$, we can define $A_1:=\min\{A_1(q)|q\in[q_0, 2\bar q]\}$ and $A_2:=\min\{A_2(r)|r\in [r_0, \bar q+1]\}$. Notice that they cannot be both zero otherwise inequality (\ref{eq:Condition_case_1}) is not satisfied. We can then drop the dependency on $q$ and $r$ of the constants in the previous inequality and have
\begin{equation}
\label{eq:unrefined_F_estimate}
    F_{q,r}(u)\geq -D_3\|u\|^2_{L^2(M,\partial M)}+A_1\left[\int_M |R'||u|^2dV_g\right]^{\frac{q}{2}}+A_2\left[\int_{\partial M}|H'||\Tr u|^2d\sigma_g\right]^{\frac{r}{2}}
\end{equation}

Now, $A_2=0$ if, and only if, $H'\equiv 0$. In this case the right-hand side of inequality (\ref{eq:unrefined_F_estimate}) becomes
\begin{equation}
    F_{q,r}(u)\geq -D_3(\|u\|^2_{L^2(M,\partial M)})+A_1\left[\int_{\partial M}|R'| u^2dV_g\right]^{\frac{q}{2}}
\end{equation}
and by condition (\ref{eq:Condition_case_1}) we have

\begin{equation}
\label{eq:lower_bound_F_negative}
    F_{q,r}\geq -D_3\|u\|^2_{L^2(M,\partial M)}+A_1\left[\epsilon_0 \int_M|R'|dV_g\right]^{\frac{q}{2}}||u||^q_{L^2(M,\partial M)}
\end{equation}

As said, $\int_M|R'|dV_g>0$, thus the coefficient of $||u||^q_{L^2(M,\partial M)}$ is larger than zero and, since $q\geq q_0>2 $, there is $K$ satisfying the hypothesis of the theorem. In fact, if $K$ satisfies the hypothesis for $q=q_0$, the same $K$ satisfies for $q>q_0$. A similar thing happens if $A_2=0$ instead.

Finally, if we assume $A_1, A_2>0$, since $\frac{q}{2}>\frac{r}{2}>1$, by lemma \ref{l:unmatched_exponents}, there is $p>1$ and $J>0$, independent of $q, r$ such that $\|u\|^2_{L^2(M,\partial M)}\geq J$ implies, as a consequence of condition (\ref{eq:Condition_case_1})
\begin{equation}
    A_1\left[\int_M |R'||u|^2dV_g\right]^{\frac{q}{2}}+A_2\left[\int_{\partial M}|H'||\Tr u|^2d\sigma_g\right]^{\frac{r}{2}}\geq \|u\|^{2p}_{L^2(M,\partial M)}
\end{equation}
and replacing in (\ref{eq:unrefined_F_estimate})
\begin{equation}
\label{eq:lower_bound_F_non_zero_curvatures}
     F_{q,r}(u)\geq -D_3\|u\|^2_{L^2(M,\partial M)}+\|u\|^{2p}_{L^2(M,\partial M)}
\end{equation}
and because $p>1$ we can choose $K>J$ satisfying the theorem.

\textbf{Case 2: ($u\in A_{\epsilon_0}$)} In that case, if $R', H'\leq 0$:
\begin{equation}
\label{eq:lower_bound_F_zero}
    F_{q,r}(u)\geq E(u)\geq L||u||^2_{L^2(M,\partial M)}.
\end{equation}
And the result follows immediately.
\end{proof}
\end{lemma}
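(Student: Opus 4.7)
The plan is to establish a dichotomy on $W^{1,2}(M,\partial M)$ based on whether the $L^2$-mass of $u$ weighted by the target curvatures is small (meaning $u$ is concentrated near the zero set $Z \cup Z_\partial$) or not. Accordingly, I would introduce the set
\begin{equation*}
A_\epsilon = \Big\{u \in W^{1,2}(M,\partial M) : \int_M |R'|u^2\,dV_g + \int_{\partial M}|H'|(\Tr u)^2\,d\sigma_g \leq \epsilon \|u\|^2_{L^2(M,\partial M)}\big(\|R'\|_{L^1(M)} + \|H'\|_{L^1(\partial M)}\big)\Big\},
\end{equation*}
and split the proof into $u \in A_{\epsilon_0}$ versus $u \notin A_{\epsilon_0}$ for an $\epsilon_0$ to be chosen.

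The first key step is to show that for some small $\epsilon_0 > 0$ and some $L \in (0,\lambda(Z,Z_\partial))$, one has $E(u) \geq L\|u\|^2_{L^2(M,\partial M)}$ whenever $u \in A_{\epsilon_0}$. I would argue by contradiction: assume there exist $\epsilon_k \to 0$ and $v_k \in A_{\epsilon_k}$ with $\|v_k\|_{L^2(M,\partial M)} = 1$ and $E(v_k) < L$. Lemma \ref{t:generalities_y}(c) together with Lemma \ref{l:estimates_L2norms} bounds $\{v_k\}$ in $W^{1,2}(M)$, so after passing to a subsequence it converges weakly in $W^{1,2}$ and strongly in $L^2(M)$ and $L^2(\partial M)$ to some $v$ with $\|v\|_{L^2(M,\partial M)} = 1$. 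The defining inequality of $A_{\epsilon_k}$ forces $\int |R'|v^2 = \int |H'|(\Tr v)^2 = 0$, so $v$ is supported in $Z \cup Z_\partial$ and belongs to $W^{1,2}(Z,Z_\partial)$. Weak lower semicontinuity of $E$ (arguments as in the proof of Theorem \ref{t:subcrit}) yields $E(v) \leq \liminf E(v_k) \leq L$, contradicting $\lambda(Z,Z_\partial) > L$, which holds by Theorem \ref{t:equal_signs} since $(Z,Z_\partial)$ is Yamabe positive.

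In Case $u \in A_{\epsilon_0}$, the terms involving $R'$ and $H'$ in $F_{q,r}(u)$ are nonnegative (because $R',H' \leq 0$), so $F_{q,r}(u) \geq E(u) \geq L\|u\|^2_{L^2(M,\partial M)}$, which exceeds any $B$ once $\|u\|_{L^2(M,\partial M)}$ is large enough. In Case $u \notin A_{\epsilon_0}$, Lemma \ref{t:generalities_y}(a) combined with the trace inequality yields a bound $E(u) \geq -D_3\|u\|^2_{L^2(M,\partial M)}$ for a constant $D_3$ independent of $q,r$. Meanwhile, Hölder's inequality gives
\begin{equation*}
\int_M|R'|u^2\,dV_g \leq \|R'\|_{L^1(M)}^{1-2/q}\Big(\int_M|R'||u|^q\,dV_g\Big)^{2/q},
\end{equation*}
and similarly for the boundary term. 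So
\begin{equation*}
F_{q,r}(u) \geq -D_3\|u\|^2_{L^2(M,\partial M)} + A_1\Big(\int_M|R'|u^2\,dV_g\Big)^{q/2} + A_2\Big(\int_{\partial M}|H'|(\Tr u)^2\,d\sigma_g\Big)^{r/2},
\end{equation*}
where $A_1,A_2$ can be taken uniform in $q \in [q_0,2\bar q]$ and $r \in [r_0,\bar q+1]$ by continuity in the exponents. The condition $u \notin A_{\epsilon_0}$ then says the sum of the two integrals in parentheses exceeds a fixed positive multiple of $\|u\|^2_{L^2(M,\partial M)}$, so Lemma \ref{l:unmatched_exponents} (applied with $x = \|u\|^2_{L^2(M,\partial M)}$) produces some $p > 1$ for which $F_{q,r}(u) \geq -D_3\|u\|^2_{L^2(M,\partial M)} + \|u\|^{2p}_{L^2(M,\partial M)}$ once $\|u\|_{L^2(M,\partial M)}$ is large, which dominates any $B$.

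The main obstacle is the first step, where one must invoke Yamabe positivity of the zero set through its linear counterpart $\lambda(Z,Z_\partial)$ (justified by Theorem \ref{t:equal_signs}) and carefully verify that the weak limit inherits support in $Z \cup Z_\partial$; the remaining technical care is to keep the constants $A_1, A_2, D_3$, and the threshold from Lemma \ref{l:unmatched_exponents} uniform in $q$ and $r$, and to separately treat the degenerate subcases where one of $R' \equiv 0$ or $H' \equiv 0$ renders $A_1$ or $A_2$ zero (in which case the corresponding single-power term alone supplies the required growth, since the $\notin A_{\epsilon_0}$ condition then localizes entirely to the nondegenerate term).
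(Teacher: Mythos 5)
Your proposal is correct and follows essentially the same route as the paper: the same decomposition into $A_{\epsilon_0}$ and its complement, the same compactness/contradiction argument for the claim $E(u)\geq L\|u\|^2_{L^2(M,\partial M)}$ on $A_{\epsilon_0}$ (using $\lambda(Z,Z_\partial)>0$ via Theorem \ref{t:equal_signs}), and the same H\"older-plus-Lemma \ref{l:unmatched_exponents} treatment, with uniform constants and the degenerate subcases handled as in the paper.
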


Notice that in the case of inequality (\ref{eq:lower_bound_F_zero}), $F_{q,r}\geq 0$, while in the case of inequalities (\ref{eq:lower_bound_F_negative}) and (\ref{eq:lower_bound_F_non_zero_curvatures}) the right hand sides also have lower bounds, independent of $u$. Putting all together we see that $F_{q,r}$ has a lower bound independent of $u$. It is natural, then, to look for minimizers.

\begin{proposition}
Under the hypothesis of the previous lemma, fixing $q, r$ subcritical, assume $R'$ and $H'$ are bounded. Assume also that
\begin{equation}
\label{eq:condition_negativity_of_curvatures}
    \frac{n-2}{4(n-1)}\int_M RdV_g+\frac{n-2}{2}\int_{\partial M}Hd\sigma_g<0
\end{equation}
Then there is $u_{q,r}>0$ a function that minimizes $F_{q,r}$ and is a weak solution to
\begin{equation}
\label{e:F_subcritical_problem}
\begin{cases}
    -\Delta u_{q,r}+\frac{n-2}{4(n-1)}R u_{q,r}=\frac{n-2}{4(n-1)}R' u_{q,r}^{q-1},\ in\ \Omega\\
\Tr\partial_\nu u_{q,r}+\frac{n-2}{2}H\Tr u_{q,r}=\frac{n-2}{2}H'(\Tr u_{q,r})^{r-1},\ in\ \Sigma
\end{cases}.
\end{equation}
\end{proposition}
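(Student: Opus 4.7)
The plan is a direct calculus-of-variations argument, leveraging the coercivity already established in Lemma \ref{l:coercivity}. First, I would take a minimizing sequence $\{u_i\}\subset W^{1,2}(M,\partial M)$ with $F_{q,r}(u_i)\to \inf F_{q,r}$. Since Lemma \ref{l:coercivity} applies (the pair $(Z,Z_\partial)$ is Yamabe positive by assumption), setting $B = \inf F_{q,r}+1$ forces $\|u_i\|_{L^2(M,\partial M)}\le K$ uniformly. Because $R'\le 0$ and $H'\le 0$, the nonlinear correction terms in $F_{q,r}$ are nonnegative, so $E(u_i)\le F_{q,r}(u_i)$ is bounded from above; combining this with the curvature bounds in Lemma \ref{t:generalities_y}(a), a trace inequality, and the uniform $L^2(M,\partial M)$ control yields a uniform $W^{1,2}(M)$ bound on $\{u_i\}$. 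Replacing $u_i$ by $|u_i|$ (which leaves $F_{q,r}$ invariant) we may assume $u_i\ge 0$.

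Next, by reflexivity and the subcriticality of $q$ and $r$, pass to a subsequence with $u_i\rightharpoonup u$ in $W^{1,2}(M)$, $u_i\to u$ strongly in $L^q(M)$ and $L^2(M)$, and $\gamma u_i\to \gamma u$ strongly in $L^r(\partial M)$ and $L^2(\partial M)$. Weak lower semicontinuity of $\|\nabla \cdot\|_{L^2}^2$, together with the compactness-based arguments already used in Theorem \ref{t:subcrit} (items (ii) and (v) there, applied to the curvature terms), yield $E(u)\le \liminf E(u_i)$. Since $R'$ and $H'$ are bounded, strong $L^q$/$L^r$ convergence gives $\int_M |R'||u_i|^q dV_g \to \int_M |R'||u|^q dV_g$ and likewise for $H'$. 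Hence $F_{q,r}(u)\le \liminf F_{q,r}(u_i)=\inf F_{q,r}$, so $u$ is a nonnegative minimizer.

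The crucial step is showing $u\not\equiv 0$: this is where the hypothesis (\ref{eq:condition_negativity_of_curvatures}) enters. Testing with the constant function $t\cdot 1$ for small $t>0$ gives
\begin{equation*}
F_{q,r}(t) = t^2\Bigl(\tfrac{n-2}{4(n-1)}\int_M R\,dV_g+\tfrac{n-2}{2}\int_{\partial M}H\,d\sigma_g\Bigr) + t^q\int_M \tfrac{n-2}{2q(n-1)}|R'|\,dV_g + t^r\int_{\partial M}\tfrac{n-2}{r}|H'|\,d\sigma_g,
\end{equation*}
whose first term is strictly negative by (\ref{eq:condition_negativity_of_curvatures}) and dominates as $t\to 0^+$, since $q,r>2$. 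Therefore $\inf F_{q,r}<0$ and so $F_{q,r}(u)<0$, forcing $u\not\equiv 0$. A standard computation of the first variation of $F_{q,r}$ shows that $u$ is a nonnegative weak solution of (\ref{e:F_subcritical_problem}).

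Finally, regularity and strict positivity. The subcritical exponents $q,r$ make $R'|u|^{q-1}$ and $H'|\gamma u|^{r-1}$ lie in good enough Sobolev/Besov spaces to invoke the regularity machinery cited in Theorem \ref{t:subcrit} (via Corollary B.4 of \cite{HT13}, as the forthcoming Lemma \ref{l:regularity} records in detail), so $u\in W^{s,p}(M)$. The strong maximum principle argument used at the end of the proof of Theorem \ref{t:subcrit} (Lemma B.7 of \cite{HT13}) then promotes $u\ge 0$, $u\not\equiv 0$ to $u>0$ throughout $M$. The main obstacle is the nontriviality of the minimizer; once (\ref{eq:condition_negativity_of_curvatures}) is exploited to make $\inf F_{q,r}<0$, everything else is a routine combination of coercivity, weak compactness, and the regularity/maximum-principle results already in place.
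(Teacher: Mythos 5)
Your proposal is correct and follows essentially the same route as the paper: a direct minimization using the coercivity lemma for the uniform $L^2(M,\partial M)$ bound, the $W^{1,2}$ bound and weak/strong compactness with lower semicontinuity exactly as in Theorem \ref{t:subcrit}, the constant test function together with \eqref{eq:condition_negativity_of_curvatures} to force a negative infimum and hence a nontrivial minimizer, and Lemma B.7 of \cite{HT13} for strict positivity. The only cosmetic difference is that the paper uses the value of $F_{q,r}$ at the constant function as the energy bound for the minimizing sequence, while you take $\inf F_{q,r}+1$; this changes nothing of substance.
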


\begin{proof}
Let $\{u_k\}_k$ be a minimizing sequence for $F_{q,r}$ in $W^{1,2}(M,\partial M)$. Since $F_{q,r}(|u|)=F_{q,r}(u)$, we can assume $u_k\geq 0$ for each $k$.

Now, fixing a constant function $\tilde u\equiv A\in W^{1,2}(M,\partial M)$ we have
\begin{equation}
\label{eq:defining_B}
    F_{q,r}(\tilde u)= A^2\left[\frac{n-2}{4(n-1)}\int_M R  dV_g+\frac{n-2}{2}\int_{\partial M}H d\sigma_g\right]-A^q \frac{2}{q}\int_M R' dV_g-A^{r}\frac{2}{r}\int_{\partial M}H' d\sigma_g:=B
\end{equation}
So, since $\{u_k\}_k$ is minimizing, we can assume $F_{q,r}(u_k)\leq B, \forall k$ and, by the previous lemma, $\exists K_B>0$ such that $\|u_k\|_{L^2(M,\partial M)}<K_B$. Now, as seen in the proof of the previous lemma, $F_{q,r}(u)\geq E(u),\forall u$, so this, together with item a of lemma \ref{t:generalities_y}, shows that 
$$
\begin{array}{ccl}
    \|u_k\|^2_{W^{1,2}(M)} &=& E(u_k)-\frac{n-2}{4(n-1)}\int_MRu_k^2dV_g-\frac{n-2}{2}\int_{\partial M}H(\Tr u_k)^2d\sigma_g+\|u_k\|^2_{L^2(M)} \\
    &\leq & B+\frac{n-2}{4(n-1)}|\int_MRu_k^2dV_g|+\frac{n-2}{2}|\int_{\partial M}H(\Tr u_k)^2d\sigma_g| +K_B^2\\
    &\leq&  B+\frac{n-2}{4(n-1)}\left(\epsilon\|u_k\|^2_{W^{1,2}(M)}+K_\epsilon \|u_k\|^2_{L^2(M)}\right)\\
    &&+\frac{n-2}{2}\left(\epsilon\|\Tr u_k\|^2_{W^{\frac{1}{2}, 2}(\partial M)}+K_\epsilon \|\Tr u_k\|^2_{L^2(\partial M)}\right) +K_B^2\\
    &\leq& B+K_B^2\left(1+\frac{n-2}{4(n-1)}K_\epsilon+\frac{n-2}{2}K_\epsilon\right)+\epsilon\left(\frac{n-2}{4(n-1)}+\frac{n-2}{2}C\right)\|u_k\|^2_{W^{1,2}(M)}
\end{array}
$$
$C$ the constant given by the trace inequality. So, choosing $\epsilon$ small enough we finally have
\begin{equation}
\label{eq:lower_bound_H_1_norm}
\|u_k\|^2_{W^{1,2}(M)}\leq \frac{B+K_B^2\left(1+\frac{n-2}{4(n-1)}K_\epsilon+\frac{n-2}{2}K_\epsilon\right)}{1-\epsilon\left(\frac{n-2}{4(n-1)}+\frac{n-2}{2}C\right)}
\end{equation}
so $\{u_k\}_k$ is a bounded sequence in $W^{1,2}(M)$ and we can use the same argument to say that there is $u_{q,r}$ in $W^{1,2}(M)$ such that:
\begin{enumerate}[(i)]
    \item $u_k\rightharpoonup u_{q,r}$ in $W^{1,2}(M)$;
    \item $u_k\to u_{q,r}$ in $L^{p}(M)$, $p\in [2, 2\bar q)$;
    \item $\gamma u_k\to \gamma u_{q,r}$ in $L^{s}(\partial M)$, $s\in [2, \bar q+1)$.
\end{enumerate}
Now, as in the proof of theorem \ref{t:subcrit}, we have that $E(u_{q,r})\leq \liminf{E(u_k)}$. Also, since $R'$, $H'$ are bounded and $u_k\to u_{q,r}$ in $L^{q}(M)$ and $\Tr u_k\to \Tr u_{q,r}$ in $L^r(\partial M)$:
\begin{equation}
    \begin{cases}
    \int_M R'u_k^q dV_g\to \int_M R'u^q_{q,r}dV_g\\
    \int_M H'(\Tr u_k)^r dV_g\to \int_M H'(\Tr u_{q,r})^rdV_g
    \end{cases}
\end{equation}
So $F_{q,r}(u_{q,r})\leq \liminf F_{q,r}(u_k)$ and, since $u_k$ is a minimizing sequence, $u_{q,r}$ is a minimizer of $F_{q,r}$ and hence a weak solution to equation (\ref{e:F_subcritical_problem}).

Now, looking back at equation (\ref{eq:defining_B}), we see that if condition (\ref{eq:condition_negativity_of_curvatures}) is true, since $q>r\geq 2$ we can choose $A$ small enough to guarantee that $B<0$, hence $F_{q,r}(u_{q,r})<0$ and $u_{q,r}\not\equiv 0$. Thus, by lemma B.7 of \cite{HT13}, $u_{q,r}>0$ everywhere. 
\end{proof}

Notice that (\ref{eq:lower_bound_H_1_norm}) gives an uniform bound on $\|u_{q,r}\|_{W^{1,2}(M)}$, which, of course, gives an uniform bound on $\|u_{q,r}\|_{L^{2\bar q}(M)}$. We want to use this fact to get regularity results for the minimizers by resorting to a bootstrap argument, but we need first to increase this regularity a little.

\begin{lemma}
\label{l:extra_regularity} In the context of the previous lemma, there is $Q>2\bar q$ and $C>0$ independent of $q$ and $r$ such that
\begin{equation}
    \|u_{q,r}\|_{L^Q(M)}\leq C.
\end{equation}
\end{lemma}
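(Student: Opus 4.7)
The plan is to use a Moser-style test function. Write $u = u_{q,r}$ and fix a constant $\beta\in(1,(\bar q+1)/2]$ independent of $q$ and $r$ (such a $\beta$ exists because $\bar q>1$, so $(\bar q+1)/2>1$). I would formally multiply the interior equation of (\ref{e:F_subcritical_problem}) by $u^{2\beta-1}$, integrate over $M$, integrate by parts, and substitute the boundary condition on the boundary term. After these manipulations the identity reads
\begin{align*}
\frac{2\beta-1}{\beta^2}\int_M|\nabla u^\beta|^2 dV_g &+\frac{n-2}{4(n-1)}\int_M Ru^{2\beta}dV_g+\frac{n-2}{2}\int_{\partial M}H(\Tr u)^{2\beta}d\sigma_g \\
&=\frac{n-2}{4(n-1)}\int_M R'u^{q+2\beta-2}dV_g+\frac{n-2}{2}\int_{\partial M}H'(\Tr u)^{r+2\beta-2}d\sigma_g,
\end{align*}
and the right-hand side is nonpositive because $R'\leq 0$, $H'\leq 0$ and $u>0$, so it may be discarded entirely.

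Next, applying Lemma \ref{t:generalities_y}(a) with $\varphi=u^\beta$, together with the trace inequality to pass from $\|\Tr u^\beta\|_{W^{1/2,2}(\partial M)}$ to $\|u^\beta\|_{W^{1,2}(M)}$, the two remaining curvature terms on the left are bounded by
\begin{equation*}
\epsilon\|u^\beta\|_{W^{1,2}(M)}^2+C_\epsilon\bigl(\|u\|_{L^{2\beta}(M)}^{2\beta}+\|\Tr u\|_{L^{2\beta}(\partial M)}^{2\beta}\bigr),
\end{equation*}
where $\epsilon$ and $C_\epsilon$ depend only on $(M,g,R,H)$ and not on $q$ or $r$. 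Choosing $\epsilon$ small relative to the fixed number $(2\beta-1)/\beta^2$ allows me to absorb the gradient portion of $\|u^\beta\|_{W^{1,2}}^2$ into the left-hand side, producing
\begin{equation*}
\|\nabla u^\beta\|_{L^2(M)}^2\leq C_1\bigl(\|u\|_{L^{2\beta}(M)}^{2\beta}+\|\Tr u\|_{L^{2\beta}(\partial M)}^{2\beta}\bigr),
\end{equation*}
with $C_1$ independent of $q$ and $r$.

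Since $2\beta\leq\bar q+1$, the uniform $W^{1,2}(M)$ bound from the preceding proposition (see (\ref{eq:lower_bound_H_1_norm})), combined with the continuous embeddings $W^{1,2}(M)\hookrightarrow L^{2\bar q}(M)$ and $\Tr\colon W^{1,2}(M)\to L^{\bar q+1}(\partial M)$, makes the right-hand side above uniformly bounded in $q$ and $r$. Hence $\|u^\beta\|_{W^{1,2}(M)}$ is uniformly bounded, and one more application of $W^{1,2}(M)\hookrightarrow L^{2\bar q}(M)$ to $u^\beta$ yields a $q,r$-uniform bound on $\|u\|_{L^{2\bar q\beta}(M)}$. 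Taking $Q=2\bar q\beta>2\bar q$ completes the argument.

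The main technical obstacle is the legitimacy of $u^{2\beta-1}$ as a test function in (\ref{e:F_subcritical_problem}), since $u$ is a priori known only to lie in $W^{1,2}(M)$ and need not be bounded. This is handled in the standard way: one first tests against $\min(u,N)^{2\beta-2}\,u$, which is an admissible element of $W^{1,2}(M)$ for every $N>0$, and then passes to the limit $N\to\infty$ using monotone convergence together with the favourable sign of the nonlinear terms produced by $R'\leq 0$ and $H'\leq 0$.
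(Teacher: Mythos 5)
Your argument is correct and is essentially the paper's own proof: testing the weak equation against $u^{2\beta-1}$ with $\beta=1+\delta$ is exactly the paper's choice $\nu=u_{q,r}^{1+2\delta}$, $w=u_{q,r}^{1+\delta}$ (note $\frac{2\beta-1}{\beta^2}=\frac{1+2\delta}{(1+\delta)^2}=C_\delta$), followed by the same steps of discarding the nonpositive $R'$, $H'$ terms, absorbing via Lemma \ref{t:generalities_y}(a) and the trace inequality, controlling the remainder by the uniform $W^{1,2}$ bound, and applying the Sobolev embedding to get $Q=2\bar q\beta$. Your explicit truncation argument for the admissibility of the test function is a detail the paper leaves implicit, but it does not change the route.
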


\begin{proof}
Let $u_{q,r}$ be a minimizer of $F_{q,r}$ as before and define for some $\delta>0$ the auxiliary functions:
\begin{equation}
    \begin{cases}
    w=u_{q,r}^{1+\delta},\\
    \nu=u_{q,r}^{1+2\delta}.
    \end{cases}
\end{equation}
Since $u_{q,r}$ is a weak solution to (\ref{e:F_subcritical_problem}), we can test it against $\nu$ to have, defining $C_\delta=\frac{1+2\delta}{(1+\delta)^2}$:
$$
\begin{array}{ccl}
    C_\delta\|\nabla w\|^2_{L^2(M)}&=&\int_M\langle \nabla u_{q,r}, \nabla \nu \rangle dV_g \\
    &=&\frac{n-2}{4(n-1)}\int_M \left(R'u_{q,r}^{q-1}-Ru_{q,r}\right)\nu dV_g+\frac{n-2}{2}\int_{\partial M} \left(H'(\Tr u_{q,r})^{r-1}-H\Tr u_{q,r} \right)(\Tr\nu) d\sigma_g\\
    &\leq & -\left( \frac{n-2}{4(n-1)}\int_M Ru\nu dV_g+\frac{n-2}{2}\int_{\partial M}H(\gamma u)(\gamma\nu)d\sigma_g\right)\\
    &\leq& \frac{n-2}{4(n-1)}\left\vert\int_M Ru\nu dV_g\right\vert+\frac{n-2}{2}\left\vert\int_{\partial M}H(\gamma u)(\gamma\nu)d\sigma_g\right\vert\\
    &=&\frac{n-2}{4(n-1)}\left\vert\int_M Rw^2 dV_g\right\vert+\frac{n-2}{2}\left\vert\int_{\partial M}H(\gamma w)^2d\sigma_g\right\vert,
\end{array}
$$
the first inequality being true because $R'\leq 0$ and $H'\leq 0$. So, using the estimates from Lemma \ref{t:generalities_y}a we have that for $\epsilon>0$, there is $K_\epsilon>0$ such that
\begin{equation}
   C_\delta \|\nabla w\|^2_{L^2(M)}\leq \epsilon\|w\|^2_{W^{1,2}(M)}+K_\epsilon \|w\|^2_{L_2(M)}+\epsilon \|\Tr w\|^2_{W^{\frac{1}{2}, 2}(\partial M)}+K_\epsilon\|\Tr w\|^2_{L^2(\partial M)}
\end{equation}
and hence, if $L$ is the constant associated to the trace inequality, rearranging the terms we have
\begin{equation}
    (C_\delta-\epsilon-\epsilon L)\|\nabla w\|^2_{L^2(M)}\leq (\epsilon+K_\epsilon+\epsilon L)\|w\|^2_{L^2(M)}+K_\epsilon\|\Tr w\|^2_{L^2(\partial M)}.
\end{equation}
If we choose $\delta$ small enough to guarantee the right hand side is controlled by $\|u_{q,r}\|_{L^{2\bar q}(M)}$, and then $\epsilon>0$ such that the left hand side is positive, we can use the Sobolev inequality to argue there is $C'>0$ such that
\begin{equation}
    \|w\|_{L^{2\bar q}(M)}\leq C'
\end{equation}
and the result follows by taking $Q=2\bar q(1+\delta)$.
\end{proof}

Finally, we can prove the main regularity result.

\begin{lemma}[Regularity]
\label{l:regularity}
Assume $R,\ R'\in L^{\infty}(M)$, $H,\ H'\in W^{s,t}(\partial M)$ with $t>1$, $s\geq 1$ and $st>n-1$. If $u_{q,r}$ is a minimizer of $F_{q,r}$ as in the previous lemma, then $u_{q,r}\in W^{2, \frac{n}{2}}(M)$, $\gamma u_{q,r}\in W^{2, \frac{n-1}{2}}(\partial M)$ and there are constants $C, K$ {\it{independent of q and r}} such that
\begin{equation}
\begin{cases}
\|u_{q,r}\|_{W^{2,\frac{n}{2}}(M)}\leq C\\
\|\Tr u_{q,r}\|_{W^{2,\frac{n-1}{2}}(\partial M)}\leq K
\end{cases}\end{equation}
\end{lemma}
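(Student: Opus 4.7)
The plan is to bootstrap the $L^Q$ bound from Lemma~\ref{l:extra_regularity} (where $Q>2\bar q$) via elliptic regularity for the mixed boundary value problem, ultimately reaching $W^{2,n/2}$ on $M$ with the trace in $W^{2,(n-1)/2}$ on $\partial M$, uniformly in $q$ and $r$.

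First I would rewrite the Euler--Lagrange system~\eqref{e:F_subcritical_problem} as the inhomogeneous linear problem
\begin{equation*}
-\Delta u_{q,r}=f\ \text{in}\ M,\qquad \partial_\nu u_{q,r}=g\ \text{on}\ \partial M,
\end{equation*}
with
\begin{equation*}
f=\tfrac{n-2}{4(n-1)}\bigl(R' u_{q,r}^{q-1}-R\,u_{q,r}\bigr),\qquad g=\tfrac{n-2}{2}\bigl(H'(\Tr u_{q,r})^{r-1}-H\,\Tr u_{q,r}\bigr).
\end{equation*}
Because $R,R'\in L^\infty(M)$ and $H,H'\in W^{s,t}(\partial M)$, the source terms $f$ and $g$ are controlled by powers of $u_{q,r}$ and $\Tr u_{q,r}$ in the appropriate norms. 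Moreover $q\in[2,2\bar q)$ and $r\in[2,\bar q+1)$, so the exponents $q-1$ and $r-1$ are uniformly bounded above by $2\bar q-1$ and $\bar q$, respectively; this will matter for tracking the constants.

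Next I would carry out the integrability bootstrap. Starting from $u_{q,r}\in L^Q(M)$ with $Q>2\bar q$ from Lemma~\ref{l:extra_regularity}, together with the boundary estimate obtained from the trace inequality applied to the same auxiliary $w=u_{q,r}^{1+\delta}$, one has $f\in L^{Q/(q-1)}(M)$ and $g$ in a corresponding Lebesgue space on $\partial M$. Corollary~B.4 of~\cite{HT13} applied to the linear problem then produces $u_{q,r}\in W^{2,p_1}(M)$ with a quantitative bound, and the Sobolev embedding yields $u_{q,r}\in L^{Q_1}(M)$ with $Q_1/Q$ bounded below away from $1$ uniformly in $q,r\in[2,2\bar q)\times[2,\bar q+1)$, precisely because $Q$ already exceeds the critical Sobolev exponent $2\bar q$. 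Iterating finitely many times brings $u_{q,r}$ into $L^p(M)$ for arbitrarily large $p$, and the trace into $L^p(\partial M)$ for arbitrarily large $p$, with constants controlled independently of $q$ and $r$.

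Finally, with the integrability sufficiently improved, one last application of Corollary~B.4 of~\cite{HT13}, using $R,R'\in L^\infty(M)$, places $f$ into $L^{n/2}(M)$ and delivers $u_{q,r}\in W^{2,n/2}(M)$ with the desired uniform constant $C$. For the boundary conclusion, the hypothesis $H,H'\in W^{s,t}(\partial M)$ with $st>n-1$ combined with the Sobolev multiplication properties ensures that $g$ lies in a space from which the boundary estimate of Corollary~B.4 of~\cite{HT13} yields $\Tr u_{q,r}\in W^{2,(n-1)/2}(\partial M)$, with a uniform constant $K$. The main obstacle I anticipate is precisely this uniformity: one has to check that none of the Sobolev multiplier constants, embedding constants, or the elliptic estimates degenerate as $q\to 2\bar q$ or $r\to\bar q+1$. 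The strict inequality $Q>2\bar q$ provided by Lemma~\ref{l:extra_regularity} is the crucial ingredient that prevents the iteration from stalling at the critical exponent, and bookkeeping this gap at each step is what yields the desired $q,r$-independent bounds.
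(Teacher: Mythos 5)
Your overall strategy is the same as the paper's: both bootstrap the uniform $L^Q$ bound of Lemma \ref{l:extra_regularity} through the elliptic estimate of Corollary B.4 of \cite{HT13} applied to \eqref{e:F_subcritical_problem}, and both use the strict gap $Q>2\bar q$ as the mechanism that keeps the integrability gain, and hence all constants, uniform as $q\to 2\bar q$ and $r\to \bar q+1$.

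There is, however, one step where your argument as written would not go through: in the iteration you place the Neumann datum $g=\tfrac{n-2}{2}\bigl(H'(\Tr u_{q,r})^{r-1}-H\,\Tr u_{q,r}\bigr)$ merely ``in a corresponding Lebesgue space on $\partial M$''. The $W^{2,p_i}(M)$ estimate of Corollary B.4 requires the boundary datum in the fractional space $W^{1-\frac{1}{p_i},p_i}(\partial M)$, so you must control a (fractional) derivative of $(\Tr u_{q,r})^{r-1}$ along the boundary, not just its integrability. This is exactly where the paper spends most of its proof: it uses the multiplication estimate of Corollary A.5 of \cite{HT13} (this is where the hypothesis $H,H'\in W^{s,t}(\partial M)$ with $st>n-1$ enters) to peel off $H$ and $H'$, then the trace inequality to reduce to $\|u_{q,r}^{r-1}\|_{W^{1,p_i}(M)}$, and finally a H\"older argument yielding $\|u_{q,r}^{r-1}\|_{W^{1,p_i}(M)}\leq D_4\|u_{q,r}\|^{r-2}_{L^{p_i(2\bar q-1)}(M)}\|u_{q,r}\|_{W^{1,\frac{p_i(2\bar q-1)}{\bar q}}(M)}$, whose constants must also be checked to be uniform in $r$. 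You invoke Sobolev multiplication only at the final step, and only to conclude the boundary regularity, but this control of the Neumann term is needed at every iterate for the bootstrap to run at all. (A minor further point: the paper obtains the bound on $\Tr u_{q,r}$ from the trace inequality applied to the interior $W^{2,\frac n2}$ bound, rather than from the boundary part of the elliptic estimate.) With that ingredient supplied at each stage, your argument coincides with the paper's.
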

\begin{proof}
Both the regularity and the uniform bounds come from a bootstrap procedure through applications of corollary B.4 of \cite{HT13}. Taking $D=\emptyset$, $\exists C_{p_i}>0$ such that
$$
\begin{array}{ccl}
    C_{p_i}\| u_{q,r}\|_{W^{2, p_i}(M)} &\leq & \|\Delta u_{q,r}\|_{W^{0,p_i}(M)}+\|\Tr \partial_\nu u_{q,r}\|_{W^{1-\frac{1}{p_i}, p_i}(\partial M)}+\| u_{q,r} \|_{W^{0, p_i}(M)}
\end{array}
$$
Then, doing more straightforward calculations we get
$$
\begin{array}{ccl}
    C_{p_i}\|u_{q,r}\|_{W^{2, p_i}(M)} & \leq & \frac{n-2}{4(n-1)}\| R'u_{q,r}^{q-1}-R u_{q,r}\|_{L^{p_i}(M)}+\frac{n-2}{2}\|H'(\Tr u_{q,r})^{r-1}-H\Tr u_{q,r}\|_{W^{1-\frac{1}{p_i}, p_i}(\partial M)}\\
    &&+\| u_{q,r} \|_{L^{ p_i}(M)}
\end{array}
$$
because $u_{q,r}$ is a solution to equation (\ref{e:F_subcritical_problem}). Hence 
$$
\begin{array}{ccl}
    C_{p_i}\|u_{q,r}\|_{W^{2, p_i}(M)} & \leq &\frac{n-2}{4(n-1)}\max |R'|\| u_{q,r}^{q-1}\|_{L^{ p_i}(M)}+\left(\frac{n-2}{4(n-1)}\max |R|+1\right)\| u_{q,r}\|_{L^{p_i}(M)}\\  
    &&+\frac{n-2}{4(n-1)}\left(\|H'(\gamma u_{q,r})^{r-1}\|_{W^{1-\frac{1}{p_i}, p_i}(\partial M)}+\|H\gamma u_{q,r}\|_{W^{1-\frac{1}{p_i}, p_i}(\partial M)}\right) 
\end{array}
$$
and by corollary A.5 of \cite{HT13} there is $D_1>0$ such that, if $D_2>\max\{\|H'\|_{W^{s,t}(\partial M)}, \|H\|_{W^{s,t}(\partial M)}\}$,
$$
\begin{array}{ccl}
    C_{p_i}\|u_{q,r}\|_{W^{2, p_i}(M)}&\leq& \frac{n-2}{4(n-1)}\max |R'|\| u_{q,r}^{q-1}\|_{L^{ p_i}(M)}+\left(\frac{n-2}{4(n-1)}\max |R|+1\right)\| u_{q,r}\|_{L^{p_i}(M)}\\  
    &&+\frac{n-2}{4(n-1)}D_1D_2\left(\|(\gamma u_{q,r})^{r-1}\|_{W^{1-\frac{1}{p_i}, p_i}(\partial M)}+\|\gamma u_{q,r}\|_{W^{1-\frac{1}{p_i}, p_i}(\partial M)}\right)\\
    &\leq&\frac{n-2}{4(n-1)}\max |R'|\| u_{q,r}^{q-1}\|_{L^{ p_i}(M)}+\left(\frac{n-2}{4(n-1)}\max |R|+1\right)\| u_{q,r}\|_{L^{p_i}(M)}\\  
    &&+\frac{n-2}{4(n-1)}D_1D_2D_3\left(\| u_{q,r}^{r-1}\|_{W^{1, p_i}(M)}+\| u_{q,r}\|_{W^{1, p_i}(M)}\right), 
\end{array}
$$
with $D_3$ given by the trace inequality.

Now, of course $\|u_{q,r}^{q-1}\|_{L^{p_i}(M)}=\|u_{q,r}\|^{q-1}_{L^{p_i(q-1)}(M)}$. We can find a similar estimate for $\|u^{r-1}_{q,r}\|_{W^{1,p_i}(M)}$.

Using the fact that $\nabla \left(u^{r-1}\right)=(r-1)u^{r-2}\nabla u$, for any $u$:
\begin{equation*}
\begin{array}{ccl}
   \|u^{r-1}\|^{p_i}_{W^{1,p_i}(M)}  &=& \int_M |u|^{p_i(r-2)}(|u|^{p_i}+r|\nabla u|^{p_i}) dV_g \\
    &\leq & r\int_M |u|^{p_i(r-2)}(|u|^{p_i}+|\nabla u|^{p_i}) dV_g,
\end{array}
\end{equation*}
because $r>1$. Combining this with H\"older's inequality we find that
\begin{equation}
    \|u^{r-1}\|^{p_i}_{W^{1,p_i}(M)}\leq r\|u^{p_i(r-2)}\|_{L^{\frac{2\bar q-1}{\bar q-1}}(M)}\||u|^{p_i}+|\nabla u|^{p_i}\|_{L^{\frac{2\bar q-1}{\bar q}}(M)}.
\end{equation}
So, since $r\leq \bar q+1$, there is $D_4>0$ such that
\begin{equation}
    \|u^{r-1}\|_{W^{1,p_i}(M)}\leq D_4\|u\|^{r-2}_{L^{p_i(2\bar q-1)}(M)}\|u\|_{W^{1,\frac{p_i(2\bar q-1)}{\bar q}}(M)}.
\end{equation}
Now, coming back to the main flow of the proof, we have that
\begin{equation}
\begin{array}{ccl}
    \|u_{q,r}\|_{W^{2, p_i}(M)} & \leq & C^{(1)}_{p_{i+1}}\| u_{q,r}\|^{q-1}_{L^{ p_i(2\bar q-1)}(M)}+C^{(2)}_{p_{i+1}}\|u_{q,r}\|_{L^{p_i}(M)}+C^{(3)}_{p_{i+1}}\|u_{q,r}\|_{W^{1,p_i}(M)}\\
     & &+C^{(4)}_{p_{i+1}}\|u_{q,r}\|^{r-2}_{L^{p_i(2\bar q-1)}(M)}\|u_{q,r}\|_{W^{1,\frac{p_i(2\bar q-1)}{\bar q}}(M)},
\end{array}
\end{equation}
if $C^{(1)}_{p_{i+1}}=\frac{n-2}{4(n-1)}\max |R'|[V_g(M)]^{\frac{q-1}{p_i(q-1)}-\frac{q-1}{p_i(2\bar q-1)}}$, $C^{(2)}_{p_{i+1}}=\left(\frac{n-2}{4(n-1)}\max |R|+1\right)$, $C^{(3)}_{p_{i+1}}=\frac{n-2}{4(n-1)}D_1D_2D_3$ and $C^{(4)}_{p_{i+1}}=\frac{n-2}{4(n-1)}D_1D_2D_3D_4$. Now, if $\| u_{q,r}\|_{L^{ p_i(2\bar q-1)}(M)}\leq 1$, its corresponding terms are controlled. If not
\begin{equation}
\begin{array}{ccl}
    \|u_{q,r}\|_{W^{2, p_i}(M)} & \leq & C^{(1)}_{p_{i+1}}\| u_{q,r}\|^{2\bar q-1}_{L^{ p_i(2\bar q-1)}(M)}+C^{(2)}_{p_{i+1}}\|u_{q,r}\|_{L^{p_i}(M)}+C^{(3)}_{p_{i+1}}\|u_{q,r}\|_{W^{1,p_i}(M)}\\
     & &+C^{(4)}_{p_{i+1}}\|u_{q,r}\|^{\bar q-1}_{L^{p_i(2\bar q-1)}(M)}\|u_{q,r}\|_{W^{1,\frac{p_i(2\bar q-1)}{\bar q}}(M)}.
\end{array}
\end{equation}
Finally, by Sobolev embedding theorems, there are $K^{(1)}_{i+1},\  K^{(2)}_{i+1},\  K^{(3)}_{i+1}>0\  K^{(4)}_{i+1}$ such that
\begin{equation}
\begin{array}{ccl}
    \|u_{q,r}\|_{W^{2, p_i}(M)} &\leq& K^{(1)}_{i+1}\|u_{q,r}\|^{2\bar q-1}_{W^{2, p^{(1)}_{i+1}}(M)}+K^{(2)}_{i+1}\|u_{q, r}\|_{W^{2, p^{(2)}_{i+1}}(M)}+K^{(3)}_{i+1}\|u_{q,r}\|_{W^{2,p^{(3)}_{i+1}}(M)} \\
     & &+K^{(4)}_{i+1}\|u_{q,r}\|^{\bar q-1}_{W^{2, p^{(1)}_{i+1}}(M)}\|u_{q,r}\|_{W^{2,p^{(4)}_{i+1}}(M)}
\end{array}
\end{equation}
with $p^{(1)}_{i+1}=\frac{np_i(2\bar q-1)}{n+2p_i(2\bar q-1)}\geq p^{(2)}_{i+1}$ and $p^{(4)}_{i+1}=\frac{np_i(2\bar q-1)}{n\bar q+p_i(2\bar q-1)}\geq p^{(3)}_{i+1}$. But we also have $p^{(1)}_{i+1}>p^{(4)}_{i+1}$! So, since $Q>2\bar q$, the usual bootstrap arguments can be used to give a constant $D>0$, independent of $q$, $r$ subcritical such that
\begin{equation}
    \|u_{q,r}\|_{W^{2,\frac{n}{2}}(M)}\leq D\|u_{q,r}\|_{L^Q(M)}
\end{equation}
and Lemma \ref{l:extra_regularity} says there is $C>0$, independent of $q,\ r$ such that
\begin{equation}
    \|u_{q,r}\|_{W^{2, \frac{n}{2}}(M)}\leq C.
\end{equation}
The bound on the traces come straight from the trace inequality.
\end{proof}

We can finally prove our main theorem

\begin{theorem}
\label{t:result_yamabe_negative}
Let $(M, \partial M)$ be a smooth, Yamabe-negative manifold with boundary, $g$ a $W^{s, p}$ Riemannian metric, $sp>n$, $s\geq 1$ with bounded scalar curvature $R$ and mean extrinsic curvature $H\in W^{r,t}(\partial M)$ along the boundary, $t>1$, $r\geq 1$ and $rt>n-1$. If $R'$, is a bounded non-positive function, $H'\in W^{r,t}(\partial M)$, then there is a function $u\in W^{2,\frac{n}{2}}(M)\cap L^{\infty}(\partial M)$ such that the metric $g'=u^{2\bar q-2}g$ has scalar curvature $R'$ inside $M$ and mean extrinsic curvature $H'$ along the boundary $\partial M$ if, and only if, $(Z,Z_\partial )$ is Yamabe positive.
\end{theorem}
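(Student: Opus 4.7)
The plan is to prove the two implications separately. For the sufficiency direction (Yamabe positivity of $(Z, Z_\partial)$ implies existence of $u$), I would pass to the limit in the subcritical minimization problem of the previous proposition. Fix sequences $q_k \nearrow 2\bar q$ and $r_k \nearrow \bar q+1$ with $q_k > r_k$, and let $u_k := u_{q_k, r_k}$ be the corresponding minimizers of $F_{q_k, r_k}$. Lemma \ref{l:regularity} gives a uniform bound $\|u_k\|_{W^{2, n/2}(M)} \leq C$; by Sobolev embedding this yields uniform bounds in $L^Q(M)$ and $L^Q(\partial M)$ for every finite $Q$, as well as boundedness in $W^{1,2}(M)$. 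Extracting a weakly convergent subsequence, we obtain $u \in W^{2, n/2}(M)$ with $u_k \rightharpoonup u$ in $W^{2, n/2}(M)$ and strong convergence in $L^Q(M)$ and $L^Q(\partial M)$ for every finite $Q$. The linear pieces of the weak formulation of \eqref{e:F_subcritical_problem} pass by weak convergence, while the nonlinear pieces $R' u_k^{q_k - 1}$ and $H' (\Tr u_k)^{r_k - 1}$ pass using boundedness of $R'$, $H'$ together with strong $L^Q$ convergence, so $u$ is a weak solution of \eqref{e:conformal-transformation}. Once $u \not\equiv 0$ is established, strict positivity follows from Lemma B.7 of \cite{HT13} as in Theorem \ref{t:subcrit}.

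The main obstacle, and the only place where Yamabe positivity of $(Z, Z_\partial)$ enters directly (beyond its role in Lemma \ref{l:coercivity} and the preceding proposition, which produce the $u_k$ in the first place), is showing $u \not\equiv 0$. Evaluating $F_{q_k, r_k}$ on a sufficiently small constant trial function and invoking \eqref{eq:condition_negativity_of_curvatures} yields a uniform upper bound $F_{q_k, r_k}(u_k) \leq B < 0$. Since $R', H' \leq 0$, we have $F_{q,r}(\varphi) \geq E(\varphi)$ for every $\varphi$, so $E(u_k) \leq B < 0$ for all $k$. By weak lower semicontinuity of $E$ in $W^{1,2}(M)$ (the curvature integrals are continuous against $L^2$-strong convergence via the compact embedding $W^{1,2}(M) \hookrightarrow L^2(M)$, while the gradient term is weakly lower semicontinuous), we get $E(u) \leq \liminf_k E(u_k) \leq B < 0$, which is incompatible with $u \equiv 0$.

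For the necessity direction, suppose $u > 0$ solves \eqref{e:conformal-transformation}. For any $v \in W^{1,2}(Z, Z_\partial)$, I would multiply the interior equation by $v^2/u$ and integrate by parts, apply the pointwise inequality $\nabla u \cdot \nabla(v^2/u) \leq |\nabla v|^2$ (equivalent to $|\nabla v - (v/u)\nabla u|^2 \geq 0$), and substitute the boundary equation for $\partial_\nu u$, obtaining
\begin{equation*}
E(v) \geq \frac{n-2}{4(n-1)} \int_M R'\, u^{2\bar q - 2} v^2 \, dV_g + \frac{n-2}{2} \int_{\partial M} H'\, u^{\bar q - 1} (\Tr v)^2 \, d\sigma_g = 0,
\end{equation*}
the final equality because $R' v^2 \equiv 0$ a.e.\ on $M$ (since $R' = 0$ on $Z$ and $v = 0$ on $M \setminus Z$) and $H' (\Tr v)^2 \equiv 0$ a.e.\ on $\partial M$. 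Equality $E(v) = 0$ would force the inner inequality to be saturated, yielding $\nabla(v/u) \equiv 0$ on the connected manifold $M$, so $v = cu$ for some constant $c$. The Yamabe-negativity of $(M, \partial M)$ rules out $(Z, Z_\partial) = (M, \partial M)$, for otherwise $\tilde g = u^{2\bar q - 2} g$ would be a scalar-flat representative of the conformal class with zero boundary mean curvature, forcing $\Yam_g(M, \partial M) \geq 0$. Hence at least one of $M \setminus Z$ and $\partial M \setminus Z_\partial$ has positive measure; on this set $v$ vanishes while $u > 0$, so $c = 0$ and $v \equiv 0$. Consequently $E(v) > 0$ for every nonzero $v \in W^{1,2}(Z, Z_\partial)$, which gives $\lambda_g(Z, Z_\partial) > 0$, and Theorem \ref{t:equal_signs} then concludes $\Yam_g(Z, Z_\partial) > 0$.
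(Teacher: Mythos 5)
Your sufficiency direction is essentially the paper's argument: conformally normalize so that $R_{\tilde g},H_{\tilde g}<0$ (so \eqref{eq:condition_negativity_of_curvatures} holds), take subcritical minimizers $u_{q_k,r_k}$ of $F_{q_k,r_k}$, use the uniform $W^{2,\frac n2}$ bound of Lemma \ref{l:regularity} to extract a limit, and pass to the limit in the weak formulation of \eqref{e:F_subcritical_problem} to solve \eqref{e:critical_problem}. In fact you add something the paper's proof leaves implicit: the uniform bound $F_{q_k,r_k}(u_{q_k,r_k})\leq B<0$ from a small constant trial function, which forces $E(u)\leq B<0$ by lower semicontinuity and hence $u\not\equiv0$, with positivity then from Lemma B.7 of \cite{HT13}. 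Your necessity direction, however, takes a genuinely different route. The paper computes the relative invariant in the conformal metric $g'=u^{2\bar q-2}\tilde g$: for any test function supported in $Z\cup Z_\partial$ the curvature terms vanish because the curvatures of $g'$ are $R'$ and $H'$, so $E_{g'}$ reduces to the Dirichlet energy, and the sign is transferred back via the attained minimizer of $\Yam^{2,2}_1(Z,Z_\partial)$ (Proposition \ref{c:minimizer_eigenvalue_Laplacian}), Theorem \ref{t:equal_signs}, and conformal invariance of the sign. You instead stay in the background metric and run the classical $v^2/u$ substitution; since $E_{g'}(\varphi)=E_{g}(u\varphi)$ (Lemma \ref{t:generalities_y}), this is the ``by hand'' version of the same computation. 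Your route avoids invoking conformal invariance but requires the integration by parts against $v^2/u$ (so $1/u$ must be controlled, i.e.\ $u$ bounded below, which is fine once $u>0$ and continuous on $\overline M$ is in hand) -- a level of technical care comparable to the paper's.

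Two small patches are needed in your necessity argument. First, ``$E(v)>0$ for every nonzero $v\in W^{1,2}(Z,Z_\partial)$'' does not by itself give $\lambda(Z,Z_\partial)>0$: strict positivity on the admissible cone does not bound the infimum away from zero. You must invoke Proposition \ref{c:minimizer_eigenvalue_Laplacian} (the infimum is attained by some nonzero minimizer, whose Rayleigh quotient is then strictly positive), exactly as the paper does, before applying Theorem \ref{t:equal_signs}. Second, ``$(Z,Z_\partial)\neq(M,\partial M)$ hence one of $M\setminus Z$, $\partial M\setminus Z_\partial$ has positive measure'' is not literally a valid implication, since the complements could be nonempty but null; but in that case $R'=0$ a.e.\ and $H'=0$ a.e., and your own Yamabe-nonnegativity contradiction applies verbatim, so the dichotomy should simply be stated in terms of a.e.\ vanishing. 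With these two repairs the argument is sound.
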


\begin{proof}
First, since $(M, \partial M)$ is Yamabe-negative, equation (\ref{eq:constant_sign_curvatures}) and theorem \ref{t:subcrit} give $\phi>0$ a function defined over $\bar M$ such if $\tilde g=\phi^{2\bar q-2}g$, $R_{\tilde g}, H_{\tilde g}<0$ everywhere and condition (\ref{eq:condition_negativity_of_curvatures}) is satisfied by $\tilde g$. So we proceed with $(M,\partial M, \tilde g)$ instead of $(M,\partial M, g)$.

Now, if $(Z,Z_\partial)$ is Yamabe positive, let $(q_n, r_n)$ be a sequence, increasing on each coordinate, converging to $(2\bar q, \bar q+1)$. Then we have a sequence $\{u_{q_n, r_n}\}_n$ of minimizers of $F_{q_n, r_n}$. Now, this sequence is bounded in $W^{2, \frac{n}{2}}(M)$, so there is $u\in W^{2, \frac{n}{2}}(M)$ such that, up to subsequences, $u_{q_n, r_n}\to u$ in $W^{1,2}(M)$ and $u_{q_n, r_n}\to u$ uniformly in $\bar M$. So $u$ is a solution to
\begin{equation}
\label{e:critical_problem}
\begin{cases}
    -\Delta u+\frac{n-2}{4(n-1)}R_{\tilde g} u=\frac{n-2}{4(n-1)}R' u^{2\bar q-1},\ in\ M\\
\Tr\partial_\nu u+\frac{n-2}{2}H_{\tilde g}\Tr u=\frac{n-2}{2}H'(\Tr u)^{\bar q},\ in\ \partial M
\end{cases}.
\end{equation}
On the other hand, assume there is such solution $u$. Then we can calculate the Yamabe invariant of $(Z, Z_\partial)$ with respect to the metric $g'=u^{2\bar q-2}\tilde g$. If $B^{2,2}_1(Z, Z_\partial)$ is empty, $\mathcal Y^{2,2}_1(Z,Z_\partial)=+\infty$ by definition. Otherwise, there is a minimizer $\bar u\in B^{2,2}_1(Z, Z_\partial)$ such that $\mathcal{Y}^{2,2}_1(Z,Z_\partial)=E(\bar u)$.

But if $\bar u\in W^{1,2}(Z, Z_\partial)$, $\bar u$ is supported in $Z\cup Z_\partial$, so
$$
\int_Z R'\bar u^2dV_g=\int_{Z_\partial} H'(\Tr\bar u)^2 d\sigma_g=0
$$
and $E(\bar u)=\int_Z|\nabla \bar u|^2dV_g\geq 0$. We just have to rule out $E(u)=0$.

If $Z$ has positive measure, $E(\bar u)=0$ if, and only if, $\bar u$ is a constant. But $\bar u$ is supported in $(Z,Z_\partial)$, so either  $\bar u\equiv 0$, which is not possible because $\bar u\in B^{2,2}_1(Z,Z_\partial)$ or $(Z,Z_\partial)=(M,\partial M)$ and the manifold is Yamabe zero, which is not the case.

Now, if $Z$ has measure zero, and $\bar u$ is a function in $W^{1,2}(M)$ supported in $Z$, $\Tr \bar u\equiv 0$. So $B^{2,2}_1(Z,Z_\partial)=\emptyset$, which is not the case for us here. So $E(\bar u)>0$.
\end{proof}


\section{Examples and Consequences}
\label{sec:examples_and_consequences}

With our main result stated, it seems worthwhile to study some specific cases in order to make a better sense of the tools we just developed.

In the next two corollaries we show some cases where we can guarantee, under weaker conditions, that $\mathcal{Y}(Z,Z_\partial)$ is positive. Behind both results is the fact that test functions in $W^{1,2}(\Omega, \Sigma)$ in general cannot be concentrated only on the boundary. That is, if $u\in W^{1,2}(\Omega,\Sigma)$ is non-zero in a point $p\in \Sigma$ in the boundary, it must be non-zero in nearby points inside the manifold. But in that case, $u\equiv 0$ outside $\Omega\cup \Sigma$, so that is only possible if $p$ lies in the closure of $\Omega$. As a result we can see that, in fact, only the set $\bar \Omega$ counts towards the evaluation of $\Yam(\Omega,\Sigma)$.
\begin{corollary}
In the context of Theorem \ref{t:result_yamabe_negative}, if $Z_\partial\cap \bar Z=\emptyset$, one can find the metric $g'$ if, and only if, $\mathcal{Y}(Z,\emptyset)$ (which equals $\Yam(Z)$ in the sense of \cite{DM15}) is positive.
\begin{proof}
In this case, $u\in W^{1,2}(Z,Z_\partial)$ implies $(\Tr u)\vert_{Z_\partial}\equiv 0$.
\end{proof}
\end{corollary}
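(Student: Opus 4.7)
The proof strategy mirrors the authors' one-line hint: under the disjointness assumption $Z_\partial\cap\bar Z=\emptyset$, every element of $W^{1,2}(Z,Z_\partial)$ has vanishing trace on all of $\partial M$, so the admissible class and the energy reduce to those defining $\Yam(Z,\emptyset)$, and Theorem \ref{t:result_yamabe_negative} can then be invoked directly.

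First I would work at the level of the pre-closure family defining $W^{1,2}(Z,Z_\partial)$. Pick $\varphi\in C^1(Z)\cap C(\bar Z)$ with $\varphi|_{M\setminus Z}\equiv0$ and $(\Tr\varphi)|_{\partial M\setminus Z_\partial}\equiv0$. For any $p\in Z_\partial$, the hypothesis $p\notin\bar Z$ gives a neighborhood $U$ of $p$ in $M$ with $U\cap Z=\emptyset$, so $\varphi\equiv 0$ on $U$ and its trace vanishes on the open neighborhood $U\cap\partial M$ of $p$ in $\partial M$. This shows $\Tr\varphi\equiv0$ on $Z_\partial$, and combined with the constraint $\Tr\varphi\equiv0$ on $\partial M\setminus Z_\partial$, we conclude that $\Tr\varphi\equiv 0$ on all of $\partial M$.

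Next I would push this vanishing through the $W^{1,2}(M)$-closure. Given $u\in W^{1,2}(Z,Z_\partial)$, take an approximating sequence $\{\varphi_n\}$ from the pre-closure family with $\varphi_n\to u$ in $W^{1,2}(M)$. By continuity of the trace $\Tr\colon W^{1,2}(M)\to W^{1/2,2}(\partial M)$, we have $\Tr\varphi_n\to\Tr u$, and since each $\Tr\varphi_n$ vanishes identically on $\partial M$, so does the limit. Therefore $W^{1,2}(Z,Z_\partial)\subset W^{1,2}(Z,\emptyset)$; the reverse inclusion is immediate. In particular the constraint sets $B^{2\bar q,r}_0(Z,Z_\partial)$ and $B^{2\bar q,r}_0(Z,\emptyset)$ coincide, and for any $\varphi$ in this common set the boundary term of $E(\varphi)$ drops out.

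Consequently $\Yam(Z,Z_\partial)=\Yam(Z,\emptyset)$, which matches the Dilts--Maxwell invariant $\Yam(Z)$ of \cite{DM15}. Applying Theorem \ref{t:result_yamabe_negative} then delivers both directions of the corollary. There is no serious obstacle here; the only mild subtlety is justifying that the trace vanishing obtained pointwise for the pre-closure functions survives the $W^{1,2}$-closure, and this is handled cleanly by the continuity of $\Tr$ together with the fact that the zero element is closed in $W^{1/2,2}(\partial M)$.
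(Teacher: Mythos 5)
Your argument is correct and is exactly the expansion of the paper's one-line proof: the disjointness $Z_\partial\cap\bar Z=\emptyset$ forces every pre-closure test function to vanish on a neighborhood of $Z_\partial$, hence its trace vanishes on all of $\partial M$, and continuity of $\Tr:W^{1,2}(M)\to W^{1/2,2}(\partial M)$ carries this through the closure, giving $W^{1,2}(Z,Z_\partial)=W^{1,2}(Z,\emptyset)$ and $\Yam(Z,Z_\partial)=\Yam(Z,\emptyset)$, after which Theorem \ref{t:result_yamabe_negative} applies. Nothing differs from the paper's intended route; your write-up merely supplies the details the authors left implicit.
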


Taking the same reasoning to a more extreme consequence, if $\Omega\cup \Sigma$ has no interior in $\bar M$, $W^{1,2}(\Omega,\Sigma)$ is trivial and $B^{q,r}_b(\Omega,\Sigma)$ is empty regardless of $q$, $r$ or $b$. As a consequence, if $\Omega$ has measure zero, $\Yam(\Omega,\Sigma)=+\infty$.
\begin{corollary}
In the context of Theorem \ref{t:result_yamabe_negative}, if $Z$ has measure zero, one can find the metric $g'$ regardless of $H'$, as long as $H'\leq 0$.
\end{corollary}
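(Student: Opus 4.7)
The plan is to reduce everything to Theorem \ref{t:result_yamabe_negative} by showing that, under the measure-zero hypothesis on $Z$, the pair $(Z,Z_\partial)$ is automatically Yamabe positive. Since that theorem characterizes solubility of the prescribed curvature problem precisely by positivity of $\mathcal{Y}(Z,Z_\partial)$, the corollary will follow immediately once this is established for every admissible $H'\leq 0$. The key observation, already foreshadowed in the discussion preceding the corollary, is that the relative Sobolev space $W^{1,2}(Z,Z_\partial)$ collapses to $\{0\}$ when $Z$ has measure zero, which forces $B^{q,r}_b(Z,Z_\partial)=\emptyset$ regardless of $Z_\partial$.

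More concretely, I would first inspect the defining set
\begin{equation*}
\left\{\varphi\in C^1(Z)\cap C(\overline{Z}) : \varphi\vert_{M\setminus Z}\equiv 0\ \text{and}\ (\Tr\varphi)\vert_{\partial M\setminus Z_\partial}\equiv 0\right\}
\end{equation*}
whose $W^{1,2}(M)$-closure is $W^{1,2}(Z,Z_\partial)$. Any such $\varphi$ vanishes on $M\setminus Z$, a set of full measure under the assumption $\vol(Z)=0$, so $\varphi=0$ a.e.\ in $M$ and hence represents the zero element of $W^{1,2}(M)$. Taking the closure then gives $W^{1,2}(Z,Z_\partial)=\{0\}$.

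With this in hand, the constraint $\|\varphi\|_{L^q(Z)}^q+b\|\Tr\varphi\|_{L^r(Z_\partial)}^r=1$ in the definition of $B^{q,r}_b(Z,Z_\partial)$ cannot be satisfied, so the admissible set is empty and $\mathcal{Y}^{q,r}_b(Z,Z_\partial)=\inf_\emptyset E=+\infty$ by convention. In particular $\mathcal{Y}(Z,Z_\partial)>0$, so $(Z,Z_\partial)$ is Yamabe positive independently of which $Z_\partial=\{H'=0\}\subset\partial M$ arises. Theorem \ref{t:result_yamabe_negative} then supplies the required conformal factor $u$, completing the proof.

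There is essentially no analytic obstacle here; the whole point is that $Z_\partial$ never enters the test-function class once $Z$ is negligible. The only thing worth being careful about is the convention $\inf_\emptyset=+\infty$, which I would state explicitly so that the strict positivity of $\mathcal{Y}(Z,Z_\partial)$ that feeds into Theorem \ref{t:result_yamabe_negative} is unambiguous.
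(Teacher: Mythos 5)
Your argument is correct and is essentially the paper's own: the remark preceding the corollary makes exactly this point, namely that when $Z$ has measure zero every admissible test function vanishes a.e., so $W^{1,2}(Z,Z_\partial)$ is trivial, $B^{q,r}_b(Z,Z_\partial)=\emptyset$, and $\Yam(Z,Z_\partial)=+\infty$ by the convention $\inf_\emptyset=+\infty$, after which Theorem \ref{t:result_yamabe_negative} applies regardless of $Z_\partial$. Your explicit insistence on stating that convention is a reasonable touch, but the route is the same.
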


\subsection{Open Sets in $\mathbb{R^n}$}

A set of cases for which we can calculate the Yamabe invariant - and compare with Escobar results in \cite{Esco92b} for example - is when M is an open bounded subset of $R^n$. In that case, $R\equiv 0$ and the energy reduces to
\begin{equation}
    E(u)=\int_{\Omega}\|\nabla u\|^2dV_g+\frac{n-2}{2}\int_\Sigma H(\Tr u)^2d\sigma_g.
\end{equation}
In particular, if $\Sigma=\emptyset$, $E(u)\geq 0$ for all $u\in W^{1,2}(\Omega,\emptyset)$, with equality if, and only if, $u\equiv 0$, since there are no other constants in $W^{1,2}(\Omega, \emptyset)$. As a consequence, all open sets in the interior of flat manifolds are Yamabe positive. In particular, $\mathcal{Y}(M,\emptyset)>0$.

Our Theorem \ref{t:result_yamabe_negative} then implies that if $H'<0$ everywhere, $R'\leq 0$, $M$ is a bounded open subset of $\mathbb{R}^n$ and $\Yam(M,\partial M)<0$, there is a metric conformal to the Euclidean metric on $(M,\partial M)$ with $R'$ as its scalar curvature inside the manifold and $H'$ its mean curvature over $\partial M$. With the appropriate hypothesis, this implies the negative case for Theorem 1 in \cite{Esco92b}. We do not have restrictions on the dimension for the Yamabe-negative case though.

Escobar's classification also shows that the Yamabe-negative condition cannot be dropped in Theorem \ref{t:result_yamabe_negative}, since it is {\textit{not}} true that one can realize $R'\equiv 0$ and $H'<0$ in Yamabe-positive manifolds with boundary that are subsets of $\mathbb{R}^n$. For example, a straightforward application of the divergence theorem shows there is no solution to
\begin{equation}
    \begin{cases}
    -\Delta u \equiv 0,\ in\ \mathbb{D}^n,\\
    \Tr \partial_\nu u+\frac{n-2}{2}\Tr u=-\frac{n-2}{2}(\Tr u)^{\bar q},\ in\ \mathbb{S}^{n-1},
    \end{cases}
\end{equation}
what does not contradict our theorem because the unit ball with its boundary is Yamabe-positive, since for $u\in W^{1,2}(\mathbb{D}^n, \mathbb{S}^{n-1})$
\begin{equation}
    E(u)=\int_{\mathbb{D}^n}\|\nabla u\|^2dV_g+\int_{\mathbb{S}^{n-1}}(\Tr u)^2d\sigma_g
\end{equation}
which is non-negative and vanishes if, and only if, $u\equiv 0$.

\subsection{Lichnerowicz Equation}

The Lichnerowicz equation arises in General Relativity in the formulation of the initial value problem for Einstein equations. Briefly, one wants to find a metric in a given conformal class that satisfies the geometric constraint equations for a spacetime that solves Einstein's equations with given matter fields. The Lichnerowicz equation then is a scalar condition on the conformal factor. A derivation can be found, for example, in section 1.3 of \cite{Hebey14}.

In \cite{HT13} the authors discuss the Lichnerowicz equation in the context of manifolds with boundary and find existence conditions to a wide class of problems that can be stated as boundary value formulations of the Lichnerowicz equation. In their theorem 6.2, the authors exactly deal with Yamabe negative manifolds with boundary and prove that, in that case, the existence of solutions to the Lichnerowicz equation corresponds to the existence of solutions to a prescribed curvature problem. Our Theorem \ref{t:result_yamabe_negative}, then, solves the problem to a class of those equations.

To state it precisely, consider the Lichnerowicz equation
\begin{equation}
    \label{e:Lichnerowicz}
    \begin{cases}
    -\Delta u+\frac{n-2}{4(n-1)}Ru=\frac{n-2}{4(n-1)}R'u^{2\bar q-1}+a_{w}u^{-2\bar q-1},\ in\ M\\
    \Tr \partial_\nu u+\frac{n-2}{2}H\Tr u=\frac{n-2}{2}H'(\Tr u)^{\bar q}-b_wu^{-\bar q},\ in\ \partial M,
    \end{cases}
\end{equation}
notice that there is no part of the boundary where the initial condition specified is a Dirichlet-type condition. In that case, \cite{HT13} shows that, if $(M,\partial M)$ is Yamabe negative, the existence of solutions to (\ref{e:Lichnerowicz}) is equivalent to the existence of solutions to (\ref{e:critical_problem}) if $a_w\geq 0$, $b_w\leq 0$, $R'\leq 0$ and $H'\leq 0$, exactly the case studied in this paper. So the conditions on Theorem \ref{t:result_yamabe_negative} are also the conditions on the existence of solutions to (\ref{e:Lichnerowicz}), provided $a_w\geq 0$ and $b_w\leq 0$.


\printbibliography

\end{document}